\numberwithin{equation}{section}
\theoremstyle{plain}
\newtheorem{theorem}{Theorem}[section]
\newtheorem{lemma}[theorem]{Lemma}
\newtheorem{prop}[theorem]{Proposition}
\theoremstyle{definition}
\newtheorem{assump}{Assumption}
\theoremstyle{remark}
\newtheorem*{remark}{Remark}
\newtheorem{example}[theorem]{Example}
\DeclareMathOperator{\dist}{dist}
\DeclareMathOperator{\spec}{spec}
\DeclareMathOperator{\supp}{supp}
\newcommand{\secref}[1]{\S\ref{#1}}
\newcommand{\figref}[1]{Figure~\ref{#1}}
\newcommand{\assumpref}[1]{Assumption~\ref{#1}}
\newcommand{\exref}[1]{\textbf{Example~\ref{#1}}}
\newcommand{\ud}{\,\mathrm{d}}
\newcommand{\RR}{\mathbb{R}}
\newcommand{\CC}{\mathscr{C}}
\newcommand{\Or}{\mathcal{O}}
\newcommand{\wt}[1]{\widetilde{#1}}
\newcommand{\mc}[1]{\mathcal{#1}}
\newcommand{\eps}{\epsilon}
\newcommand{\Om}{\Omega}
\newcommand{\Lam}{\Lambda}
\newcommand{\lam}{\lambda}
\newcommand{\dx}{\mathrm{d}x}
\newcommand{\na}{\nabla}
\newcommand{\al}{\alpha}
\newcommand{\lr}[1]{\bigl(#1\bigr)}
\newcommand{\abs}[1]{\lvert#1\rvert}
\newcommand{\norm}[1]{\lVert#1\rVert}
\newcommand{\Veff}{V_{\mathrm{eff}}}
\newcommand{\barint}{\kern4pt \raise3.4pt\hbox{\vrule height.6pt
    width7pt} \kern-11pt \int}
\begin{document}

\title{Analysis of the divide-and-conquer method for electronic
  structure calculations}

\author{Jingrun Chen}
\address{Mathematics Department, South Hall 6705, University of California, Santa Barbara, CA 93106}
\email{cjr@math.ucsb.edu}

\author{Jianfeng Lu}
\address{Departments of Mathematics, Physics, and Chemistry, Duke University, Box 90320, Durham, NC 27708}
\email{jianfeng@math.duke.edu}

\begin{abstract}
  We study the accuracy of the divide-and-conquer method for
  electronic structure calculations.  The analysis is conducted for a
  prototypical subdomain problem in the method. We prove that the
  pointwise difference between electron densities of the global system
  and the subsystem decays exponentially as a function of the distance
  away from the boundary of the subsystem, under the gap assumption of
  both the global system and the subsystem. We show that gap
  assumption is crucial for the accuracy of the divide-and-conquer
  method by numerical examples. In particular, we show examples with
  the loss of accuracy when the gap assumption of the subsystem is
  invalid.
\end{abstract}

\subjclass[2000]{15A18, 35P99, 65N25}
\keywords{Density functional theory, divide-and-conquer method, gap assumption, exponential decay}
\date{\today}
\maketitle

\section{Introduction}

Many systems in materials science, chemistry and other areas are
greatly influenced by the electronic structure, which requires the
full quantum-mechanical description. However, directly solving the
quantum many-body problem for real systems is impractical even with
the present supercomputers since a $3N$-dimensional
antisymmetric wave function is needed to describe a system with $N$
electrons. Lots of electronic structure models, which aim at
approximating the solution of many-body Schr\"odinger equations, have
been proposed.

Kohn-Sham density functional theory (DFT) \cites{Hk, Ks, ParrYang:89,
  M04} is one of the most popular and successful tools for electronic
structure analysis, in which $N$ one-particle wave functions are used
to describe the N-electron system with properly approximated energy
functionals. The corresponding Kohn-Sham equations are a system of
nonlinear eigenvalue problems. To solve the nonlinear eigenvalue
equation, the self-consistent field iteration is often used.  The
electron density is updated at each iteration until self-consistency
is achieved. The computational cost of each iteration step for
conventional algorithm scales as $\Or(N^3)$ due to diagonalization and
orthogonalization. For different systems, the required number of
iterations might scale differently and depends on the choice of mixing
techniques. The total cost of solving the Kohn-Sham equations scales
at least as $\Or(N^3)$. Such computational scaling is prohibitively
expensive when the number of electrons is large.

Many efforts have been devoted to design linear scaling methods, i.e.,
$\Or(N)$ methods, for electronic calculations within the framework of
Kohn-Sham DFT over the past twenty years (see e.g.~\cites{G99,
  Bowler}). These methods share the common ground of exploiting the
locality, or nearsightedness \cites{K96, ProdanKohn} to reduce the
computational complexity.  Locality here means the dependence of the
electron density on the environment decays in distance.  The first
linear scaling method is the divide-and-conquer (DAC) method
proposed by Weitao Yang \cites{Y91a, Y91b}, where the global system is
divided into several subsystems, and each subsystem is solved
separately with atomic orbitals. The electron density of the global
system is then found by getting a global equilibrium condition for the
Fermi energy. In each self-consistent iteration, the cost of DAC
method depends on the number of subsystems which is proportional to
the number of electrons. The DAC method scales as $\Or(N)$ naturally
if the self-consistent field iteration is independent of the
considered system.

In this article, we aim at understanding the accuracy of the
 DAC method, as one of the popular approaches of linear
scaling algorithms. We note that the main idea of the algorithm is
quite similar to the domain decomposition type method, commonly used
in numerical solutions to PDEs. The goal is to understand the accuracy
of the method and the conditions under which the method works.
A key component of the
analysis is to understand the locality of electronic structure from a
mathematical point of view. The main ingredients are geometric
resolvent identity and a Combes-Thomas type decay estimate of the
Green's function.

In the DAC method of electronic structure calculations,
the subsystem can be understood as the global system under certain
(not necessarily small) perturbations.  It turns out that the accuracy
of the method depends crucially on the gap structure of the system and
of the subsystem.  We examine the gap assumption in cases when it is
valid and invalid carefully with numerous examples.  Let us also point
out that our analysis does not assume any particular way of
restriction of the Hamiltonian onto a sub-domain (besides that the gap
assumption is satisfied). This flexibility allows the analysis to be
generalized to a variety of methods in electronic structure
calculations based on the domain decomposition idea.

The outline of the paper is as follows. The detailed description of
the DAC method is presented in \secref{dd}. The
accuracy of the method is analyzed in \secref{accuracy}. By examples
in one dimension and two dimensions, we demonstrate the accuracy of the
method when the gap assumption is valid, and the loss of the accuracy when
the gap assumption is invalid in \secref{gap}.

\section{Divide-and-conquer method}\label{dd}

\subsection{Kohn-Sham density functional theory}
Consider a system of $N_c$ nuclei and $N$ electrons. A set of
one-particle wave functions $\{\psi_k(x)\}_{k=1}^N$ is employed to
represent the interacting electrons in Kohn-Sham DFT. At zero
temperature, the Kohn-Sham energy functional can be written as (for
simplicity of the presentation, we will ignore the spin degeneracy
here and in sequel)
\begin{equation}\label{KSene}
\begin{aligned}
  E_{\mathrm{KS}}[\{\psi_k\}_{k=1}^N]&
  = \sum_{k=1}^{N}\int_{\RR^3}\dfrac12\abs{\na\psi_k}^2\dx
  + \int_{\RR^3}V(x)\rho(x)\dx\\
  &\quad+\dfrac12\iint_{\RR^3 \times \RR^3}
  \dfrac{\lr{\rho-m}(x)\lr{\rho-m}(x')}{\abs{x-x'}}\dx\dx'
  +E_{\mathrm{XC}}[\rho],
\end {aligned}
\end{equation}
where the electron density is given by
\begin {equation}\label{den}
  \rho(x)= \sum_{k=1}^N\abs{\psi_k(x)}^2,
\end {equation}
and the ionic function takes the form
\begin {equation}\label{ionic}
  m(x)=\sum_{k=1}^{N_c} m^a(x-R_k),
\end {equation}
where $m^a$ is a localized smooth function and $\{R_k\}_{k=1}^{N_c}$
are the positions of nuclei, i.e., we have taken a local
pseudopotential for the electron-nucleus interaction \cite{M04} for
simplicity. Our results can be generalized to nonlocal
pseudopotential, but we will not go into the details.

The Kohn-Sham energy functional is minimized with the orthonormal
constraints of the orbitals
\begin {equation}\label{ortho}
  \int_{\RR^3}\psi_k(x)\psi_l(x)\dx=\delta_{kl}, \qquad k,l=1,2,\ldots, N.
\end {equation}
The Euler-Lagrange equation, known as the Kohn-Sham equation, can be
written as
\begin {equation}\label{KSequation}
  H(\rho) \psi_k(x)=\eps_k\psi_k(x), \qquad k=1,2,\ldots, N,
\end {equation}
where $H(\rho) =-\dfrac12\Delta+\Veff$ with
$\Veff=V(x)+\int_{\RR^3}\frac{\rho(x')-m(x')}{\abs{x-x'}}\dx'
+V_{\mathrm{XC}}[\rho]$ and
$V_{\mathrm{XC}}[\rho] = \frac{\delta E_{\mathrm{XC}}[\rho]}{\delta\rho}$.
Here, $\eps_k$ are a set of eigenvalues,
increasingly ordered, and $\{\psi_k\}$ are the associated
eigenfunctions of the effective Hamiltonian. Note that this is a
nonlinear eigenvalue problem as the effective Hamiltonian $H$ depends
on the density, which in turn, depends on the eigenfunctions.

To solve the Kohn-Sham equation \eqref{KSequation}, a self-consistent
iteration is usually employed. At each iterate, for the current guess
of the density $\rho$, we solve for the eigenvalue problem of
$H(\rho)$ to find the first $N$ eigenpairs $\{\eps_k, \psi_k\}$. From
the eigenfunctions, we form a new density $\rho_{\text{new}} = \sum_k
\abs{\psi_k}^2$. The nonlinear iteration is used to find a fixed point
of the map from $\rho$ to $\rho_{\text{new}}$, which is known as the
Kohn-Sham map (see e.g., \cite{ELu:2013}).

The algorithmic bottleneck of the above procedure is to evaluate the
density $\rho_{\text{new}}$ given a Hamiltonian: For a fixed
Hamiltonian $H=-\frac{1}{2}\Delta+V(x)$ with some effective potential
$V \in L^{\infty}$ (consequently, we will take $\mc{D}(H) =
  H^2(\RR^3)$), we look for the square sum of its first $N$
eigenfunctions,
\begin {equation}\label{KSeqn}
  H \psi_k(x)=\eps_k\psi_k(x),\quad k=1,\ldots, N,
\end {equation}
which is a linear eigenvalue problem.  A conventional diagonalization
of the discretized Hamiltonian to solve \eqref{KSeqn} leads to
computational cost that scales cubicly with respect to the number of
electrons. However, the eigenfunctions $\{\psi_k\}$ are just an
intermediate step for the electron density $\rho = \sum_k
\abs{\psi_k}^2$. It is therefore possible to design efficient
algorithms that avoid the eigenvalue problem on the whole
computational domain. One such strategy is the DAC method, which aims
to achieve linear scaling cost for computing the density.

\subsection{Divide-and-conquer method}

The idea of using the DAC method to study electron
structures was firstly proposed by Weitao Yang in \cites{Y91a, Y91b},
which was based on a localized Hamiltonian formulation.  It was then
generalized to a density-matrix formulation \cite{Yl95}. Some recent
developments of the DAC method, or more generally,
domain decomposition type method, can be found in \cites{Wzm, Zmw,
  Barraultetal:2007, Bencteuxetal:2008}.  A great advantage of the
method lies on the intrinsic parallel properties between subdomains,
which has been investigated for large scale calculations with more
than $10^6$ atoms and $10^{12}$ electronic degrees of freedom
\cites{KobayashiNakai:09, OhbaOgata:12, Shimojo:2008, Shimojo:2011}.
In what follows, we describe the main idea of the DAC
method, in the spirit of \cite{Y91a}. To clearly present the method,
we will stay on the PDE level and formulate the algorithm in terms of
operators, rather than first imposing a discretization of the
Hamiltonian. This way, we can separate the error caused by the
 DAC method and by a numerical discretization of the continuous
problem.

The DAC method for electronic structure calculations
involves the following steps. Let us denote the whole computational
domain as $\Omega$. Our goal is to find its corresponding density of
the Hamiltonian $H$ on the whole domain.
\begin{itemize}
\item[Step 1.] Define a partition of domain, $\{\Lam_{\al}\}$, and a
  partition of unity subordinate to the open covering $\{\Lam_{\al},
  p_{\al}\}$.  Usually neighboring subdomains intersect, i.e.,
  $\Lam_{\al}\cap\Lam_{\al'}\neq \emptyset$ when $\al\neq\al'$.
  Nonnegative partition functions
  $\{p_{\al}\}$ satisfy $\sum_{\al}p_{\al}(x)=1, \forall x\in\Om$.
%\figref{partition} is the schematic show of buffer regions and hat-type partition functions.

%\begin {figure}[htbp]
%\vspace{-5em}
%\centering
%\includegraphics[width=4in]{partition.pdf}
%\vspace{-8em}
%\caption{\small Schematic representation of buffer regions and partition functions in the DAC
%method.}\label {partition}
%\end {figure}

\item[Step 2.] Restrict the Hamiltonian on the domain
  $\Lambda_{\alpha}$ with certain boundary conditions and solve the
  eigenvalue problem in each subsystem
  \begin{align}\label{dc}
    H_{\Lam_{\al}}\psi_k^{\al}(x)=\eps_k^{\al}\psi_k^{\al}(x),\quad x\in\Lam_{\al},
  \end{align}
  where $H_{\Lambda_{\al}}$ denotes the restriction of the
    Hamiltonian, whose domain is a subset of the Sobolev space
    $H^2(\Lambda_{\alpha})$ with prescribed boundary conditions.
\item[Step 3.] Determine the Fermi energy $\eps_F$ by solving the
  equation of charge equilibrium
  \begin {equation}\label{constrain}
    N=\sum_{\al}\sum_k f_{\beta}(\eps_F-\eps_k^{\al})\int_{\Lam_{\al}}
    p_{\al}(x)\abs{\psi_k^{\al}(x)}^2 \dx,
  \end {equation}
  where $f_{\beta}(\eps)=(1+e^{\beta(\eps-\eps_F)})^{-1}$ is the
  Fermi-Dirac function and $\beta=\dfrac1{k_BT}$ with $k_B$
  Boltzmann constant and $T$ absolute temperature.

\item[Step 4.] Construct the electron density
\begin {equation}\label{density}
  \rho^{\mathrm{DAC}}(x)=\sum_{\al}p_{\al}(x)\rho_{\alpha}(x),
\end {equation}
where $\rho_{\alpha}(x)=\sum_k  f_{\beta}(\eps_F-\eps_k^{\al})\abs{\psi_k^{\al}(x)}^2$,
and total energy
\begin{equation}\label{energy}
\begin{aligned}
  E&=\sum_{\al}\sum_k\eps_k^{\al}
  f_{\beta}(\eps_F-\eps_k^{\al})\int_{\Lam_{\al}}p_{\al}(x)\abs{\psi_k^{\al}(x)}^2\dx
  -\frac12\int_{\Om}\int_{\Om}\frac{\rho^{\mathrm{DAC}}(x)\rho^{\mathrm{DAC}}(x')}{\abs{x-x'}}\dx\dx'\\
  &\quad+\frac12\int_{\Om}\int_{\Om}\frac{m(x)m(x')}{\abs{x-x'}}\dx\dx'
  -\int_{\Om}V_{\mathrm{XC}}[\rho^{\mathrm{DAC}}]\rho^{\mathrm{DAC}}(x)\dx+E_{\mathrm{XC}}[\rho^{\mathrm{DAC}}].
\end {aligned}
\end{equation}

\end {itemize}
Note that the above formulation corresponds to a finite temperature
calculation, as considered in the original DAC method
\cite{Y91a}. In practice, if interested in the zero temperature
calculation, we may choose $\beta$ so large that the Fermi-Dirac
function becomes approximately a Heaviside function. In the following
analysis and numerical examples, we will consider the zero temperature
case to focus on the key idea. Our analysis can be extended to finite
temperature situation.

\subsection{A prototypical subsystem problem}
From an analytical point of view, we can just focus on one subsystem
problem from the divide-and-conquer method. The analysis for other
subdomains proceeds in the same fashion and the error of the method
over the whole domain can be controlled by those of the sub-domains
using triangle inequality and observing that $p_{\alpha}$ is a
partition of unity. More specifically, note that the global density in
the DAC method is obtained by $\rho^{\mathrm{DAC}}(x) = \sum_{\alpha}
p_{\alpha}(x) \rho_{\alpha}(x)$ where $\rho_{\alpha}$ is the electron
density calculated in $\Lambda_{\alpha}$.  Denoting $\rho$ the true
density, we have
\begin{equation*}
  \begin{aligned}
    \norm{\rho - \rho^{\mathrm{DAC}}}_{L^{\infty}} & = \sup_x \Bigl\lvert \sum_{\alpha} p_{\alpha}(x) \bigl(\rho(x) - \rho_{\alpha}(x)\bigr)\Bigr\rvert \\
    & \leq \sup_{\alpha}\, \norm{\rho - \rho_{\alpha}}_{L^{\infty}(\Lambda_{\alpha})}.
  \end{aligned}
\end{equation*}
Error estimate in other norms can be similarly obtained.

Let us reformulate the DAC idea for a single domain.
Let $\Lambda$ be a subdomain and let $\Lambda_b$ be a buffer
region surrounding $\Lambda$. In terms of the algorithm in the
previous section, $\Lambda_b$ corresponds to one of the
$\{\Lambda_{\alpha}\}$, and $\Lambda$ is the support of $p_{\alpha}$,
which we choose to be strictly inside. Later in the analysis, we will
also need a slightly smaller buffer region $\wt{\Lambda}_b$ inside of
$\Lambda_b$.  These sets satisfy
$\Lambda\subset\wt{\Lambda}_b\subset\Lambda_b\subset\Om$ with some
distance separating their boundaries, see \figref{fig:domain1} for
a schematic picture.

For a prescribed Fermi energy $\eps_F$, we are interested in the
density over the domain $\Lambda$, calculated by solving the
eigenproblem on $\Lambda_b$. Namely, we define
\begin{equation}\label{denbuffer}
  \rho_{\Lambda}(x) = \sum_{\eps_k \leq \eps_F}|\psi_k(x)|^2, \quad x\in\Lambda,
\end{equation}
where the eigenpairs $(\eps_k,\psi_k)$ are obtained by solving the
following eigenvalue problem in $\Lambda_b$
\begin{equation}\label{eigbuffer}
  H_{\Lambda_b}\psi_k = \eps_k\psi_k, \quad k=1,2,\ldots.
\end{equation}
To understand the accuracy of the DAC method, it then suffices
to understand the difference between $\rho_{\Lambda}$
and the exact density $\rho$ restricted on $\Lambda$.

\begin{figure}[htbp]
\vspace{-6em}
\centering
\subfigcapskip -0.9in
\subfigure[]{%
\label {fig:domain1}
\includegraphics[width=2.5in]{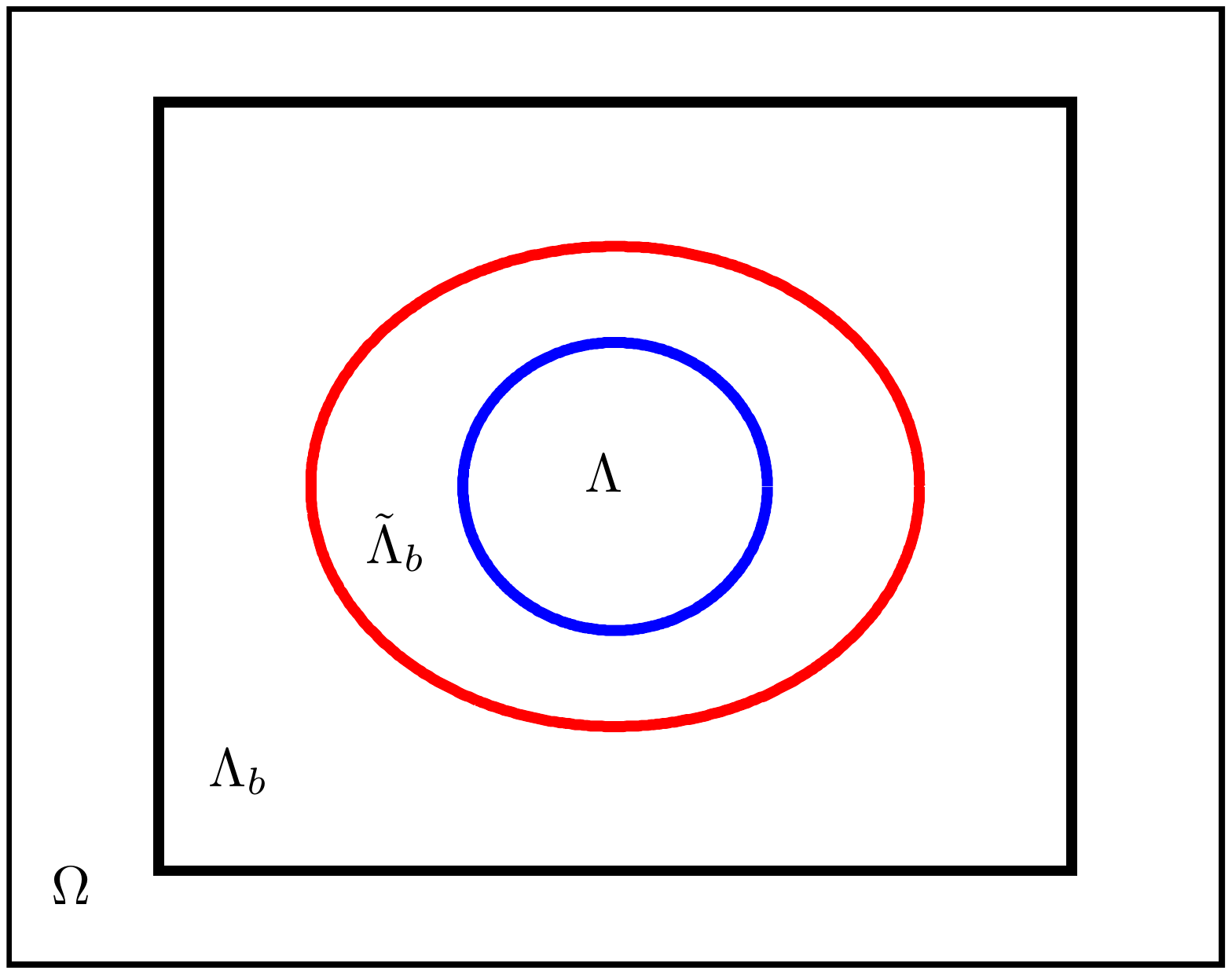}}%
\subfigure[]{
\label {fig:domain2}
\includegraphics[width=2.5in]{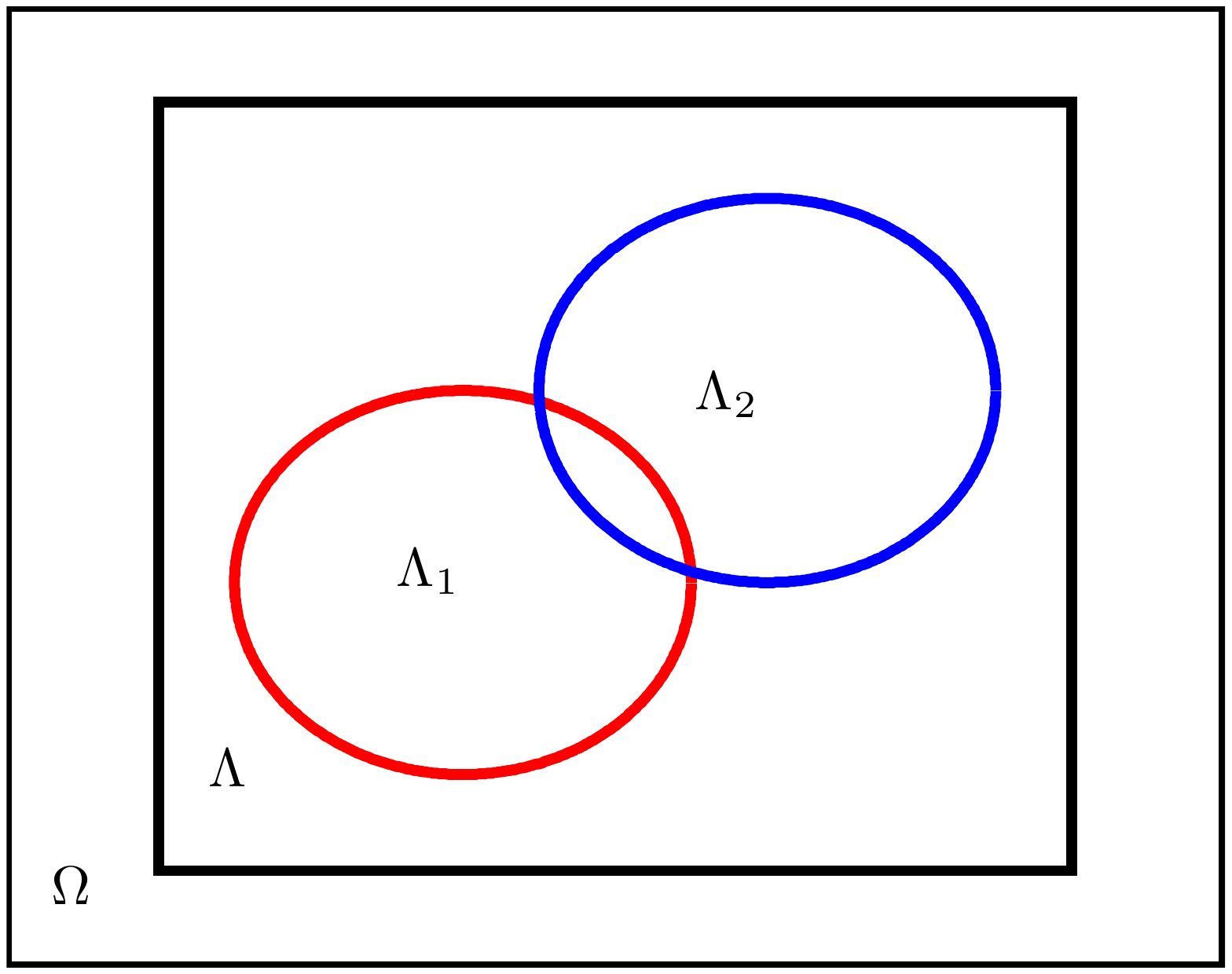}}%
\vspace{-4em}
\caption{\small Schematic pictures of domain and subdomains in the
  DAC method and the geometric resolvent
  identity. (a) A prototypical subdomain problem in the DAC
  method; (b) Domain and subdomains in the geometric
  resolvent identity Lemma~\ref{lem:GRI}.}\label {fig:domain}
\end{figure}

\section{Accuracy of the method}\label{accuracy}

The main tool we will use is the geometric resolvent identity and the
decay estimate of the Green's functions.  The geometric resolvent
identity relates the Green's function defined on a subdomain to the
Green's function on a larger domain. For a domain $\Lambda$, we will
denote $\Lambda^c$ its complement; and for two sets $A$ and $B$,
$\dist(A, B) = \inf_{x \in A, y \in B} \dist(x, y)$.

\begin{lemma}[Geometric Resolvent Identity] \label{lem:GRI} Consider
  four open sets $\Lam_1, \Lam_2, \Lam$ and $\Om$ that satisfy
  $\Lam_1\subset\Lam, \Lam_2\subset\Lam, \Lam\subset\Om$ and
  $\dist\{\Lam_1\cup\Lam_2, \Lam^c\}>0$ (see \figref{fig:domain2} for
  an illustration of these sets). Let $\Theta$ be a smooth function
  which is identically $1$ on a neighborhood of $\Lam_1\cup\Lam_2$ and
  identically $0$ on a neighborhood of $\Lam^c$. Given any restriction $H_{\Om}$ and $H_{\Lam}$ of $H$ to
  $\Om$ and $\Lam$, respectively, we have
\begin {equation}\label{gri1}
1_{\Lam_1}(H_{\Om}-\lam)^{-1}=1_{\Lam_1}(H_{\Lam}-\lam)^{-1}\Theta
+ 1_{\Lam_1}(H_{\Lam}-\lam)^{-1}[H, \Theta](H_{\Om}-\lam)^{-1}
\end {equation}
% and
% \begin {equation}\label{gri1lap}
%   \begin{aligned}
% 1_{\Lam_1}(1 - \Delta) (H_{\Om}-\lam)^{-1} & =1_{\Lam_1}(1 - \Delta) (H_{\Lam}-\lam)^{-1}\Theta \\
% & + 1_{\Lam_1}(1 - \Delta) (H_{\Lam}-\lam)^{-1}[H, \Theta](H_{\Om}-\lam)^{-1}
% \end{aligned}
% \end {equation}
for any $\lam$ for which both resolvents exist. Also
\begin {equation}\label{gri2}
1_{\Lam_1}(H_{\Om}-\lam)^{-1}1_{\Lam_2}
=1_{\Lam_1}(H_{\Lam}-\lam)^{-1}1_{\Lam_2}
+ 1_{\Lam_1}(H_{\Lam}-\lam)^{-1}[H, \Theta](H_{\Om}-\lam)^{-1}1_{\Lam_2},
\end {equation}
% and
% \begin {equation}\label{gri2lap}
%   \begin{aligned}
%     1_{\Lam_1}(1 - \Delta) (H_{\Om}-\lam)^{-1}(1 - \Delta) 1_{\Lam_2}
%     & =1_{\Lam_1}(1 - \Delta) (H_{\Lam}-\lam)^{-1} (1 - \Delta) 1_{\Lam_2} \\
% %    & + 1_{\Lam_1}(1 - \Delta) (H_{\Lam}-\lam)^{-1} [\Delta, \Theta] \\
%     & + 1_{\Lam_1}(1 - \Delta) (H_{\Lam}-\lam)^{-1}[H,
%     \Theta](H_{\Om}-\lam)^{-1}(1 - \Delta) 1_{\Lam_2}
%   \end{aligned}
% \end {equation}
under the same conditions.
\end{lemma}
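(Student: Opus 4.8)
The plan is to derive \eqref{gri1} from the elementary ``sandwiched'' resolvent identity and then obtain \eqref{gri2} by multiplying the result on the right by $1_{\Lam_2}$. Abbreviate $R_{\Om} = (H_{\Om}-\lam)^{-1}$ and $R_{\Lam}=(H_{\Lam}-\lam)^{-1}$, which exist by the hypothesis on $\lam$, and set $A = H_{\Lam}-\lam$, $B = H_{\Om}-\lam$. The algebraic skeleton of the argument is the identity
\begin{equation*}
  R_{\Lam}\Theta - \Theta R_{\Om} = A^{-1}\bigl(\Theta B - A\Theta\bigr)B^{-1},
\end{equation*}
which is checked by expanding the right-hand side and using $A^{-1}A=1$ and $BB^{-1}=1$. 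Since the scalar $\lam$ commutes with multiplication by $\Theta$, the middle factor reduces to $\Theta B - A\Theta = \Theta H_{\Om} - H_{\Lam}\Theta$.

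Next I would argue that $\Theta H_{\Om} = \Theta H$ and $H_{\Lam}\Theta = H\Theta$ on the relevant domains. The geometric input is that $\Theta$ vanishes on a neighborhood of $\Lam^c$, so multiplication by $\Theta$ carries any function into one supported strictly inside $\Lam$, away from $\partial\Lam$. Such a function automatically satisfies the boundary conditions defining $H_{\Lam}$, and $H_{\Lam}$ then acts on it exactly as the unrestricted operator $H=-\tfrac12\Delta+V$; the same holds for $H_{\Om}$. This collapses the middle factor to
\begin{equation*}
  \Theta H_{\Om} - H_{\Lam}\Theta = \Theta H - H\Theta = -[H,\Theta],
\end{equation*}
so that $R_{\Lam}\Theta - \Theta R_{\Om} = -R_{\Lam}[H,\Theta]R_{\Om}$, equivalently
\begin{equation*}
  \Theta R_{\Om} = R_{\Lam}\Theta + R_{\Lam}[H,\Theta]R_{\Om}.
\end{equation*}
Here $[H,\Theta] = -\tfrac12\bigl(\Delta\Theta + 2\na\Theta\cdot\na\bigr)$ is a first-order differential operator with smooth coefficients supported in $\supp(\na\Theta)$, hence $H$-bounded, which makes all the compositions above well-defined on the $H^2$-range of the resolvents.

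To conclude, I would multiply this identity on the left by $1_{\Lam_1}$ and use that $\Theta\equiv 1$ on a neighborhood of $\Lam_1\cup\Lam_2\supset\Lam_1$, so that $1_{\Lam_1}\Theta = 1_{\Lam_1}$; this gives \eqref{gri1} at once. Multiplying \eqref{gri1} further on the right by $1_{\Lam_2}$ and using $\Theta\, 1_{\Lam_2} = 1_{\Lam_2}$ (again because $\Theta\equiv 1$ on $\Lam_2$) collapses the first term to $1_{\Lam_1}R_{\Lam}1_{\Lam_2}$ and yields \eqref{gri2}.

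I expect the only genuine obstacle to lie in the functional-analytic bookkeeping of the second paragraph: justifying the formal cancellations at the level of unbounded operators with boundary conditions, namely that $\Theta B - A\Theta$ truly equals $-[H,\Theta]$ on the range of $R_{\Om}$ and that the resulting element lies in the domain of $R_{\Lam}$. This is precisely where the separation hypothesis $\dist\{\Lam_1\cup\Lam_2,\Lam^c\}>0$ and the placement of $\supp\Theta$ and $\supp(\na\Theta)$ strictly between $\Lam_1\cup\Lam_2$ and $\partial\Lam$ enter; once these domain matters are settled the algebraic identities themselves are routine.
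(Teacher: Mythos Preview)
Your proof is correct and follows essentially the same route as the paper's: both start from the commutator identity $[H,\Theta] = (H_{\Lam}-\lam)\Theta - \Theta(H_{\Om}-\lam)$ (equivalently your $\Theta B - A\Theta = -[H,\Theta]$), multiply on the left by $R_{\Lam}$ and on the right by $R_{\Om}$ to obtain $\Theta R_{\Om} = R_{\Lam}\Theta + R_{\Lam}[H,\Theta]R_{\Om}$, and then hit with $1_{\Lam_1}$ (and $1_{\Lam_2}$) to conclude. The paper is terser about the domain bookkeeping you flag in your last paragraph, but the argument is the same.
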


\begin{proof}
  The lemma is well-known in the analysis of Schr\"odinger operators
  and its proof is standard (see e.g., \cite{Aizenman}*{Lemma 4.2}). We
  include the short proof here for completeness.
  First note the identity
  \begin{equation}
    [H, \Theta] = (H_{\Lambda} - \lambda) \Theta - \Theta (H_{\Omega} - \lambda)
  \end{equation}
  since $\supp \Theta \subset \Lambda \subset \Omega$. The identity
  \eqref{gri1} follows from multiplying on the left by $1_{\Lambda_1}
  (H_{\Omega} - \lambda)^{-1}$ and on the right by $(H_{\Omega} -
  \lambda)^{-1}$.  The identity \eqref{gri2} follows from \eqref{gri1}
  by applying $1_{\Lambda_2}$ on the right on both hand sides.
\end{proof}

Let us recall the spectral representation of the electron density (see
e.g., \cite{ELu:2013})
\begin{equation}
  \rho(x) = \frac{1}{2\pi i} \int_{\CC} (\lambda - H)^{-1} \ud \lambda(x, x),
\end{equation}
where the right hand side stands for the diagonal of the kernel of the
operator $(2 \pi i)^{-1} \int_{\CC} (\lambda - H)^{-1} \ud
\lambda$. Here $\CC$ is a contour in the complex plane that separates
the occupied spectrum of $H$ (the eigenvalues below the Fermi energy
$\eps_F$ with the rest of the spectrum). In the DAC
method, this is approximated by
\begin{equation}
  \rho_{\Lambda}(x) = \frac{1}{2\pi i} \int_{\CC} (\lambda - H_{\Lambda_b})^{-1} \ud \lambda (x, x),
\end{equation}
where $\Lambda_b$ is a buffer region surrounding $\Lambda$. Without
loss of generality, we will assume that the buffer satisfies
$\dist(\Lambda, \Lambda_b^c) \geq 2$. We will also
define the region
\begin{equation}\label{eq:wtLambdab}
  \wt{\Lambda}_b  = \bigl \{ x \in \Lambda_b \mid \dist(x, \Lambda_b^c) \leq 1 \bigr \}.
\end{equation}
By construction, it is clear that we have $\dist(\Lambda,
\wt{\Lambda}_b^c) \geq 1$. We note that the distances $1$ and $2$ are
chosen here merely for convenience, any finite $\Or(1)$ distance will
work, though the final constants in the estimate depend on how
separated the domains are.

We may proceed to compare the pointwise values of $\rho$ and
$\rho_{\Lambda}$ by using results on regularity estimate of Green's
function for elliptic operators (e.g.,~\cite{Agmon:1965} and
\cite{ELu:2013}*{Lemma 6.4}).  Here, for
simplicity of presentation and to better convey the key idea, we will
instead work with the following locally mollified version of the
densities (with slight abuse of notations, we still denote them as
$\rho$ and $\rho_{\Lambda}$)
\begin{align}
  & \rho(x) = \frac{1}{2\pi i} \int_{\CC} \bigl\langle \varphi_x, (\lambda - H)^{-1} \varphi_x \bigr\rangle \ud \lambda, \\
  & \rho_{\Lambda}(x) = \frac{1}{2\pi i} \int_{\CC} \bigl\langle \varphi_x,  (\lambda - H_{\Lambda_b})^{-1} \varphi_x \bigr\rangle \ud \lambda,
\end{align}
where $\varphi_x$ is a fixed numerical delta function centered at
$x$. For simplicity of notation, we will also abuse the notation by
writing $\dist(x, A) := \dist(\supp \varphi_x, A)$ for a set $A$.
Note that the mollification is in agreement with practical numerical
implementations, since some discretization will be used for the
Hamiltonian operator. Other forms of $\varphi_x$, such as averaging in
a small ball around $x$, can also be used. Accuracy of the method is
the same with a possibly different constant.

In general, the restriction of $H$ onto the domain $\Lambda_b$ might
dramatically change the spectrum of the operator. As will be shown in
the numerical examples, without any assumption on the spectral
properties of the truncated operator $H_{\Lambda_b}$, the accuracy of
the method is not guaranteed, in particular, the difference between
$\rho(x)$ and $\rho_{\Lambda}(x)$ might be quite large and decay very
slowly when $x$ is moving inside $\Lambda$ away from the boundary
$\partial \Lambda$. To guarantee the fast decay of the error, we make
the following gap assumption for the truncated system $H_{\Lambda_b}$.

\begin{assump}[Gap assumption]\label{assump}
  Let $\spec_{\text{occ}}(H)$ and $\spec_{\text{unocc}}(H)$ be the
  occupied and unoccupied spectra of $H$ respectively. We assume that
  there exists $\eps_F$ and $e_g > 0$ such that
  \begin{align}
      \label{assumpa}
    & \eps_F - e_g / 2 \geq \sup \spec_{\text{occ}}(H); \\
      \label{assumpb}
    & \eps_F + e_g / 2 \leq \inf \spec_{\text{unocc}}(H); \\
      \label{assumpc}
    & (\eps_F - e_g / 2, \eps_F + e_g / 2) \cap \spec(H_{\Lambda_b}) =
    \emptyset.
  \end{align}
\end{assump}
Note that, $e_g$ might be smaller than the spectral gap between
occupied and unoccupied spectra of $H$. Physically, the assumption
means that the restriction of the Hamiltonian operator on the
subsystem preserves the gap around the Fermi energy.  In particular,
the assumption implies the existence of a contour $\CC$ such that
\begin{equation*}
  \dist(\CC, \spec(H)) \geq e_g/2 \qquad \text{and} \qquad
  \dist(\CC, \spec(H_{\Lambda_b})) \geq e_g/2.
\end{equation*}

\begin{remark}
  If Assumption~\ref{assump} is satisfied by all the sub-domains, we
  can then find a uniform gap in the spectra of all sub-domain
  Hamiltonians. This means that the Fermi level can be chosen
  uniformly for all the sub-domain, which gives the choice of the
  global Fermi energy in Step 3 of the DAC algorithm.
\end{remark}

\begin{theorem}[Accuracy of the method] \label{thm:accuracy} Under
  Assumption~\ref{assump}, there exist constants $C$ and $\gamma$ such
  that
  \begin{equation}\label{expdecay}
    \abs{\rho(x) - \rho_{\Lambda}(x)} \leq C e^{-2 \gamma (\dist(x, \Lambda_b^c) - 1)}, \qquad \forall \, x \in \Lambda.
  \end{equation}
  The constants $C$ and $\gamma$ depend only on $\eps_F$, $e_g$ and
  $\norm{V}_{L^{\infty}}$.
\end{theorem}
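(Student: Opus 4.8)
The plan is to write the difference $\rho(x)-\rho_\Lambda(x)$ as a contour integral of the \emph{difference} of the two resolvents, to rewrite that difference with the geometric resolvent identity \lemref{lem:GRI}, and then to bound the resulting expression using the exponential decay of both resolvents across the buffer collar. Subtracting the two mollified spectral representations,
\begin{equation*}
  \rho(x)-\rho_\Lambda(x)=\frac{1}{2\pi i}\int_\CC \bigl\langle \varphi_x,\bigl[(\lam-H)^{-1}-(\lam-H_{\Lambda_b})^{-1}\bigr]\varphi_x\bigr\rangle\ud\lam .
\end{equation*}
I would apply \lemref{lem:GRI} with $\Om$ the full domain, $\Lam=\Lambda_b$, and $\Lam_1=\Lam_2=B_x$ a small ball containing $\supp\varphi_x$; since $x\in\Lambda$ we have $\dist(B_x,\Lambda_b^c)\ge 2>0$, so the hypotheses hold. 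Taking $\Theta\equiv 1$ on $\wt{\Lambda}_b\supset\Lambda$ and $\Theta\equiv 0$ near $\Lambda_b^c$, identity \eqref{gri2} (with $(\lam-H)^{-1}=-(H-\lam)^{-1}$ and $1_{B_x}\varphi_x=\varphi_x$) gives
\begin{equation*}
  \bigl\langle \varphi_x,\bigl[(\lam-H)^{-1}-(\lam-H_{\Lambda_b})^{-1}\bigr]\varphi_x\bigr\rangle=-\bigl\langle \varphi_x,(\lam-H_{\Lambda_b})^{-1}[H,\Theta](\lam-H)^{-1}\varphi_x\bigr\rangle .
\end{equation*}

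The key geometric observation is that $[H,\Theta]=-\tfrac12(\Delta\Theta)-\na\Theta\cdot\na$ is supported in the collar $T:=\supp\na\Theta\subset\Lambda_b\setminus\wt{\Lambda}_b$, every point of which lies within distance $1$ of $\Lambda_b^c$; hence $\dist(\supp\varphi_x,T)\ge\dist(x,\Lambda_b^c)-1$. Inserting indicator functions localizing the commutator to $T$ and splitting off the two resolvent factors, I would invoke a Combes--Thomas estimate: because \assumpref{assump} guarantees a contour $\CC$ with $\dist(\lam,\spec H)\ge e_g/2$ and $\dist(\lam,\spec H_{\Lambda_b})\ge e_g/2$, both $\norm{1_{B_x}(\lam-H_{\Lambda_b})^{-1}1_T}$ and $\norm{1_T(\lam-H)^{-1}1_{B_x}}$ are bounded by $C\,e^{-\gamma\dist(\supp\varphi_x,T)}$, with $\gamma,C$ depending only on $e_g$ and $\norm{V}_{L^\infty}$. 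Multiplying the two factors produces $e^{-2\gamma(\dist(x,\Lambda_b^c)-1)}$, the factor $2$ reflecting that both resolvents must traverse the collar.

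It then remains to integrate over $\CC$. Since $H$ and $H_{\Lambda_b}$ are bounded below by $-\norm{V}_{L^\infty}$ and their occupied spectra lie below $\eps_F-e_g/2$, the contour may be taken bounded, of length controlled by $\eps_F$ and $\norm{V}_{L^\infty}$, with the distance-to-spectrum bounds holding uniformly on it; this yields \eqref{expdecay} with $C,\gamma$ depending only on $\eps_F$, $e_g$, and $\norm{V}_{L^\infty}$.

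I expect the main obstacle to be the unbounded first-order term $\na\Theta\cdot\na$ in the commutator: the scalar Combes--Thomas bound on the resolvent kernel does not directly control the factor $1_T\na(\lam-H)^{-1}1_{B_x}$. I would resolve this by conjugating with an exponential weight, $H_\gamma=e^{\gamma f}He^{-\gamma f}=H+\gamma\,\na f\cdot\na+O(\gamma)$, using the standard Combes--Thomas resolvent bound $\norm{(\lam-H_\gamma)^{-1}}\le C/e_g$ for small $\gamma$, and then applying the elliptic a priori estimate $\norm{\na(\lam-H_\gamma)^{-1}}\lesssim (1+|\lam|+\norm{V}_{L^\infty})^{1/2}\norm{(\lam-H_\gamma)^{-1}}+\norm{(\lam-H_\gamma)^{-1}}^{1/2}$ (obtained from the quadratic form $\norm{\na w}^2=2\langle w,(H-V)w\rangle$). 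This shows $\na(\lam-H)^{-1}$ inherits the same exponential decay as the resolvent; making this weighted gradient estimate uniform in $\lam\in\CC$ is the technical heart of the argument.
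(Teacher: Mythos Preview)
Your proposal is correct and follows essentially the same route as the paper: contour representation of $\rho-\rho_\Lambda$, geometric resolvent identity with a cutoff $\Theta$ supported in the buffer, then Combes--Thomas decay applied to each resolvent factor across the collar, with the contour length controlled by $\eps_F$ and $\norm{V}_{L^\infty}$. The only cosmetic difference is packaging: the paper conjugates by the explicit weight $\mc{W}_x$ and quotes a proposition that already contains the bound on $\mc{W}_x^{-1}\partial_j(\lambda-H)^{-1}\mc{W}_x$, whereas you phrase the decay via indicator functions and derive the gradient bound yourself from the quadratic form---the underlying argument is the same.
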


\begin{remark}
  The estimate \eqref{expdecay} guarantees that with a fixed buffer
  region, the error we make by restricting to a local problem decays
  exponentially away from the boundary. As the constants depend only
  on the spectral gap and the $L^{\infty}$ norm of the potential, if
  we fix a point $x$ and enlarge the buffer region $\Lambda_b$, the
  error will also decay exponentially, as long as the gap assumption
  is uniformly satisfied for the increasing buffer regions. This point
  would be further demonstrated in the numerical examples.
\end{remark}

Before we prove the theorem, let us recall the decay estimate of
Green's function from \cite{ELu:ARMA}*{Theorem 9} and its proof (see
also \cite{ELu:2013} where such estimates are used for the macroscopic
limit of Kohn-Sham density functional theory). We also remark that the
exponential decay property of the Green's function and the density
matrix also holds at the discrete level \cite{Benzi} and hence our
analysis can be also done for the discretized Hamiltonian.
\begin{prop}[Decay estimate of Green's function]\label{prop:greendecay}
  Given a Hamiltonian $H = - \Delta + V$ with $V \in L^{\infty}$.  For
  any $\lambda \not \in \spec(H)$, there exist constants
  $\gamma_{\max} > 0$ and $M$, depending only on $\dist(\lambda,
  \spec(H))$, $\abs{\lambda}$ and $\norm{V}_{L^{\infty}}$, such that
  for all $x_0$ and any $\gamma < \gamma_{\max}$, we have
  \begin{align}
    & \bigl\lVert \mc{W}_{x_0}^{-1} (\lambda - H)^{-1} \mc{W}_{x_0}
    \bigr\rVert \leq M  \\
    & \bigl\lVert \mc{W}_{x_0}^{-1} \partial_j (\lambda - H)^{-1}
    \mc{W}_{x_0} \bigr\rVert \leq M, \quad \text{for } j = 1, \ldots,
    d,
  \end{align}
  where $d$ is the dimension, and
  $\mc{W}_{x_0}$ is the multiplication operator given by
  \begin{equation}
    (\mc{W}_{x_0} f)(x) = \exp\bigl(-\gamma
    ((x - x_0)^2 + 1)^{1/2} \bigr) f(x).
  \end{equation}
\end{prop}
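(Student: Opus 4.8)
This is the Combes--Thomas estimate, and the plan is to conjugate the resolvent by the exponential weight and show the conjugated operator stays boundedly invertible for small $\gamma$. Write $W = \mc{W}_{x_0}$ for multiplication by $e^{-\gamma\rho}$, where $\rho(x) = ((x-x_0)^2+1)^{1/2}$. The two facts that make all constants uniform in $x_0$ are the pointwise bounds $\abs{\na\rho}\le 1$ and $\abs{\Delta\rho}\le d+1$, which hold uniformly in $x_0$ because $\rho\ge 1$ everywhere. Since $W^{-1}(\lambda-H)^{-1}W = (\lambda - W^{-1}HW)^{-1}$, the whole problem reduces to controlling the conjugated Hamiltonian $H_\gamma := W^{-1}HW$.

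First I would compute $H_\gamma$ by hand. Using
\begin{equation*}
  W^{-1}\Delta W = \Delta - 2\gamma\,\na\rho\cdot\na + \gamma^2\abs{\na\rho}^2 - \gamma\Delta\rho,
\end{equation*}
one finds $H_\gamma = H + A_\gamma$ with $A_\gamma = 2\gamma\,\na\rho\cdot\na - \gamma^2\abs{\na\rho}^2 + \gamma\Delta\rho$. The zeroth-order part of $A_\gamma$ is a bounded multiplication operator of size $O(\gamma)$ by the uniform bounds above, so the only delicate term is the first-order operator $2\gamma\,\na\rho\cdot\na$. To handle it I would establish the elliptic bound $\norm{(\lambda-H)^{-1}\partial_j}\le C$, with $C$ depending only on $\dist(\lambda,\spec(H))$, $\abs{\lambda}$ and $\norm{V}_{L^\infty}$. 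This follows from an energy estimate: for $u=(\lambda-H)^{-1}g$ one has $-\Delta u = \lambda u - Vu - g$, so integrating against $u$ gives $\norm{\na u}^2 \le (\abs{\lambda}+\norm{V}_{L^\infty})\norm{u}^2 + \norm{g}\,\norm{u}$, and combining with $\norm{u}\le \norm{g}/\dist(\lambda,\spec(H))$ bounds $\norm{\na(\lambda-H)^{-1}}$; the version with the derivative on the right then follows by passing to the adjoint and using that $H$ is self-adjoint (so its spectrum is real and $\dist(\bar\lambda,\spec(H))=\dist(\lambda,\spec(H))$).

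With this in hand, $\norm{(\lambda-H)^{-1}A_\gamma}\le 2\gamma d\,C + O(\gamma)/\dist(\lambda,\spec(H)) =: c(\gamma)$, which tends to $0$ as $\gamma\to 0$. Hence there is $\gamma_{\max}>0$ such that $c(\gamma)\le 1/2$ for $\gamma<\gamma_{\max}$, and the Neumann series gives
\begin{equation*}
  W^{-1}(\lambda-H)^{-1}W = (\lambda-H_\gamma)^{-1} = \bigl(I - (\lambda-H)^{-1}A_\gamma\bigr)^{-1}(\lambda-H)^{-1},
\end{equation*}
with norm bounded by $2/\dist(\lambda,\spec(H))$, proving the first estimate. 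For the gradient estimate I would use $W^{-1}\partial_j W = \partial_j - \gamma(\partial_j\rho)$ to write $W^{-1}\partial_j(\lambda-H)^{-1}W = (\partial_j - \gamma\,\partial_j\rho)(\lambda-H_\gamma)^{-1}$; the second piece is bounded by the first estimate, and $\norm{\partial_j(\lambda-H_\gamma)^{-1}}$ is controlled by the same energy estimate applied to $H_\gamma$, where the extra first-order term $2\gamma\,\na\rho\cdot\na v$ is absorbed into $\tfrac12\norm{\na v}^2$ via Cauchy--Schwarz and $\abs{\na\rho}\le 1$. Taking $M$ to be the larger of the two resulting constants finishes the proof.

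The main obstacle is the first-order term $2\gamma\,\na\rho\cdot\na$ in $A_\gamma$: unlike the zeroth-order perturbation it is unbounded on $L^2$, so controlling $(\lambda-H)^{-1}A_\gamma$ requires the elliptic resolvent-times-derivative estimate and care about which side the derivative sits on. The remaining point requiring attention is bookkeeping: verifying that every constant---in particular the smallness threshold $\gamma_{\max}$ and the final bound $M$---depends only on $\dist(\lambda,\spec(H))$, $\abs{\lambda}$ and $\norm{V}_{L^\infty}$ and is genuinely independent of $x_0$, which is guaranteed by the $x_0$-independent bounds on $\na\rho$ and $\Delta\rho$.
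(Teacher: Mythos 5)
Your proposal is correct and follows essentially the same route as the paper, which does not reprove this proposition but instead invokes Theorem 9 of E--Lu (\emph{Arch.\ Ration.\ Mech.\ Anal.}\ 199, 2011) whose proof is precisely this Combes--Thomas argument: conjugate by the weight, write $W^{-1}(\lambda-H)^{-1}W=(\lambda-H_\gamma)^{-1}$ with $H_\gamma=H+A_\gamma$, and invert by a Neumann series for small $\gamma$, with all constants uniform in $x_0$ thanks to the uniform bounds on $\nabla\rho$ and $\Delta\rho$. The one step worth writing out is the sandwiched first-order term $(\lambda-H)^{-1}(\partial_j\rho)\partial_j$, where commuting the multiplication past $\partial_j$ costs only a uniformly bounded $\partial_{jj}\rho$ term---exactly the point your adjoint remark and $O(\gamma)$ bookkeeping already absorb.
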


Applying the result to our current setting, since $\CC$ is compact and
by the gap assumption, $\dist(\CC, \spec(H)), \dist(\CC,
\spec(H_{\Lambda_d})) > e_g / 2$, the $\gamma_{\max}$ and $M$ can be
chosen for both $H$ and $H_{\Lambda_b}$ as constants depending only on
$\CC, e_g$, and $\norm{V}_{L^{\infty}}$. Moreover, the choice of the
contour only depend on the location of the spectral gap and the bottom
of the spectra of $H$ and $H_{\Lambda_b}$, which can be controlled by
$\eps_F$ and $\norm{V}_{L^{\infty}}$. Hence, the constants only depend
on $\eps_F$, $e_g$ and $\norm{V}_{L^{\infty}}$. Let us now proceed to
prove the Theorem.

\begin{proof}[Proof of Theorem~\ref{thm:accuracy}]
  Using the resolvent identity, we write the difference in density as the
  difference in operators. For $x \in \Lambda$, we have
  \begin{equation*}
    \rho(x) - \rho_{\Lambda}(x) = \frac{1}{2 \pi i} \int_{\CC}
    \bigl\langle \varphi_x, \bigl[(\lambda - H)^{-1} - (\lambda - H_{\Lambda_b})^{-1}\bigr] \varphi_x \bigr\rangle \ud \lambda
  \end{equation*}
  Since the contour $\CC$ is compact, we obtain
  \begin{equation*}
    \abs{\rho(x) - \rho_{\Lambda}(x)} \lesssim \max_{\lambda \in \CC}
    \Bigl\lvert \bigl\langle \varphi_x, \bigl[(\lambda - H)^{-1} - (\lambda - H_{\Lambda_b})^{-1}\bigr] \varphi_x \bigr\rangle \Bigr\rvert.
  \end{equation*}
  Let $\Lambda_x$ denote the support of $\varphi_x$,
  using Lemma~\ref{lem:GRI}, we have the geometric resolvent identity
  \begin{equation}\label{eq:geomresidentity}
    \begin{aligned}
      1_{\Lambda_x} \bigl[ (\lambda - H)^{-1} - (\lambda -
      H_{\Lambda_b})^{-1} \bigr] 1_{\Lambda_x}
      =  1_{\Lambda_x} (\lambda - H_{\Lambda_b})^{-1}
      [H, \Theta] (\lambda -
      H)^{-1} 1_{\Lambda_x},
    \end{aligned}
  \end{equation}
  where we take $\Theta$ such that $\Theta = 1$ in $\wt{\Lambda}_b$
  and $\Theta = 0$ outside $\Lambda_b$. The commutator $[H, \Theta]$
  can be calculated as
  \begin{equation*}
    [H, \Theta] = - (\Delta \Theta) - 2 \nabla \Theta \cdot \nabla.
  \end{equation*}
  Note that both $\Delta \Theta$ and $\nabla \Theta$ are supported on
  $\Lambda_b \backslash \wt{\Lambda}_b$ by the choice of $\Theta$.  By the
  construction of the set $\wt{\Lambda}_b$ as in \eqref{eq:wtLambdab},
  we can choose $\Theta$ such that $\norm{\Delta \Theta}_{L^{\infty}}$
  and $\norm{\nabla \Theta}_{L^{\infty}}$ are both $\Or(1)$ quantities.
  Applying \eqref{eq:geomresidentity}, we hence arrive at
  \begin{multline*}
    \Bigl\lvert\bigl\langle \varphi_x, \bigl[(\lambda - H)^{-1} -
    (\lambda - H_{\Lambda_b})^{-1}\bigr] \varphi_x \bigr\rangle
    \Bigr\rvert \leq \Bigl\lvert \bigl\langle \varphi_x, (\lambda -
    H_{\Lambda_b})^{-1} (\Delta \Theta) (\lambda - H)^{-1}
    \varphi_x \bigr\rangle \Bigr\rvert \\
    + 2 \Bigl\lvert \bigl\langle \varphi_x, (\lambda -
    H_{\Lambda_b})^{-1} (\nabla \Theta \cdot \nabla) (\lambda -
    H)^{-1} \varphi_x \bigr\rangle \Bigr\rvert.
  \end{multline*}
  The proof then concludes by estimating the two terms on the right
  hand side. These decay estimates are given by the next
  Lemma~\ref{lem:decay}.
\end{proof}

\begin{lemma}[Decay estimates]\label{lem:decay}
  Let $f$ be a $L^{\infty}$ function such that
  \begin{equation*}
    \supp f \subset \Lambda_b \backslash \wt{\Lambda}_b.
  \end{equation*}
  There exist constants $\gamma_{\max} > 0$ and $C$ such that for any
  $\lambda \in \CC$ and $\gamma < \gamma_{\max}$, we have
  \begin{align}
    & \Bigl\lvert \bigl\langle \varphi_x, (\lambda -
    H_{\Lambda_b})^{-1} f (\lambda - H)^{-1}
    \varphi_x \bigr\rangle \Bigr\rvert \leq C \exp(- 2 \gamma (\dist(x, \Lambda_b^c) - 1)) \norm{f}_{L^{\infty}}; \\
    & \label{eq:derivdecaybound} \Bigl\lvert \bigl\langle \varphi_x,
    (\lambda - H_{\Lambda_b})^{-1} f \partial_j (\lambda - H)^{-1}
    \varphi_x \bigr\rangle \Bigr\rvert \leq C \exp(- 2 \gamma
    (\dist(x, \Lambda_b^c) - 1)) \norm{f}_{L^{\infty}}.
  \end{align}
  where $f$ is interpreted as a multiplication operator on the left
  hand sides.
\end{lemma}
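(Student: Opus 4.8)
The plan is to prove both estimates by inserting the exponential weight operator $\mc{W}_x$ of Proposition~\ref{prop:greendecay}, centered at the point $x \in \Lam$, on either side of the multiplication operator $f$, arranged so that each of the two resolvents is converted into one of the sandwiched forms controlled by the Proposition, while the leftover weight concentrates on $\supp f$ and produces the exponential factor. Since $\mc{W}_x \mc{W}_x^{-1} = \mc{W}_x^{-1}\mc{W}_x = 1$, I would begin from the algebraic identity
\[
\average{\varphi_x, (\lam - H_{\Lam_b})^{-1} f (\lam - H)^{-1}\varphi_x}
= \average{\mc{W}_x^{-1}\varphi_x,\, \bigl(\mc{W}_x (\lam - H_{\Lam_b})^{-1}\mc{W}_x^{-1}\bigr)\bigl(\mc{W}_x f \mc{W}_x\bigr)\bigl(\mc{W}_x^{-1}(\lam - H)^{-1}\mc{W}_x\bigr)\mc{W}_x^{-1}\varphi_x},
\]
in which every intermediate pair $\mc{W}_x^{\pm 1}\mc{W}_x^{\mp 1}$ cancels. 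The whole point of this particular placement is that the two resolvent factors now appear in exactly the weighted form the Proposition bounds.

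The next step is to bound the three operator factors separately, using $\abs{\average{u, Av}} \le \norm{u}\,\norm{A}\,\norm{v}$. The factor $\mc{W}_x^{-1}(\lam - H)^{-1}\mc{W}_x$ has norm at most $M$ directly by Proposition~\ref{prop:greendecay}. The reversed-ordering factor $\mc{W}_x(\lam - H_{\Lam_b})^{-1}\mc{W}_x^{-1}$ is handled by taking adjoints: it is the adjoint of $\mc{W}_x^{-1}(\bar\lam - H_{\Lam_b})^{-1}\mc{W}_x$, so its norm is bounded by applying the Proposition at the point $\bar\lam$, which is admissible with the same constants since $\spec(H_{\Lam_b})$ is real and hence $\dist(\bar\lam, \spec(H_{\Lam_b})) = \dist(\lam, \spec(H_{\Lam_b})) \ge e_g/2$ by \assumpref{assump}; both bounds hold uniformly for $\lam \in \CC$ and every $\gamma < \gamma_{\max}$. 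Finally $\mc{W}_x f \mc{W}_x$ is multiplication by $w^2 f$, where $w(y) = \exp(-\gamma((y-x)^2+1)^{1/2})$, so its operator norm equals $\norm{w^2 f}_{L^\infty}$.

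The decay is then extracted from $w^2$ on $\supp f$. The key geometric observation is that $\supp f \subset \Lam_b \setminus \wt{\Lam}_b$ consists of points $y$ with $\dist(y, \Lam_b^c) \le 1$, so by the triangle inequality $\dist(x, \supp f) \ge \dist(x, \Lam_b^c) - 1$ for $x \in \Lam$. Using $((y-x)^2+1)^{1/2} \ge \abs{y-x} \ge \dist(x,\Lam_b^c) - 1$ on $\supp f$ gives $\norm{w^2 f}_{L^\infty} \le e^{-2\gamma(\dist(x,\Lam_b^c)-1)}\norm{f}_{L^\infty}$; this is where the rate $2\gamma$ comes from, one factor $e^{-\gamma(\cdots)}$ being spent by each of the two resolvents linking $x$ to $\supp f$. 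Together with $\norm{\mc{W}_x^{-1}\varphi_x} \le C_0\norm{\varphi_x}$ with $C_0 = \Or(1)$ (because $w^{-1}$ is bounded on the $\Or(1)$-sized set $\supp\varphi_x$), this yields the first estimate with $C = C_0^2 M^2$. The second estimate is proved identically: I would simply replace the rightmost factor by $\mc{W}_x^{-1}\partial_j(\lam - H)^{-1}\mc{W}_x$, whose norm is again at most $M$ by the second inequality of Proposition~\ref{prop:greendecay}.

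The main obstacle is essentially the only genuine choice in the argument: getting the weights placed so that both resolvents land simultaneously in an admissible sandwiched form while the surplus weight accumulates as $w^2$ on $\supp f$, and recognizing that the reversed ordering $\mc{W}_x(\lam - H_{\Lam_b})^{-1}\mc{W}_x^{-1}$ must be reached through the adjoint/$\bar\lam$ observation rather than by a direct appeal to the Proposition. The remaining ingredients—the distance bound $\dist(x,\supp f) \ge \dist(x,\Lam_b^c) - 1$ and the boundedness of $\mc{W}_x^{-1}\varphi_x$—are routine.
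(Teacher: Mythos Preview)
Your proposal is correct and follows essentially the same route as the paper: insert the exponential weight $\mc{W}_x$ around each factor, invoke Proposition~\ref{prop:greendecay} for the two weighted resolvents, and read off the decay from the multiplication operator $\mc{W}_x f \mc{W}_x$ on $\supp f \subset \Lambda_b\setminus\wt{\Lambda}_b$. The only cosmetic difference is that the paper first moves $(\lambda-H_{\Lambda_b})^{-1}$ across the inner product so that both resolvents appear directly in the form $\mc{W}_x^{-1}(\cdot)\mc{W}_x$, whereas you keep both on the right and handle the reversed ordering $\mc{W}_x(\lambda-H_{\Lambda_b})^{-1}\mc{W}_x^{-1}$ via the adjoint at $\bar\lambda$; these are equivalent, and your version is arguably more explicit about the $\bar\lambda$ point.
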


\begin{proof}
  By inserting the exponential weight $\mc{W}_x$ centered at $x$, we
  can estimate
  \begin{equation*}
    \begin{aligned}
      \Bigl\lvert \bigl\langle \varphi_x, (\lambda -
      H_{\Lambda_b})^{-1} & f (\lambda - H)^{-1} \varphi_x
      \bigr\rangle
      \Bigr\rvert \\
      & = \Bigl\lvert \bigl\langle (\lambda - H_{\Lambda_b})^{-1}
      \varphi_x, f (\lambda - H)^{-1} \varphi_x \bigr\rangle
      \Bigr\rvert \\
      & = \Bigl\lvert \bigl\langle \mc{W}_x^{-1}(\lambda -
      H_{\Lambda_b})^{-1}\mc{W}_x \mc{W}_x^{-1}\varphi_x, \mc{W}_x f
      \mc{W}_x \mc{W}_x^{-1} (\lambda - H)^{-1} \mc{W}_x \mc{W}_x^{-1}
      \varphi_x  \bigr\rangle \Bigr\rvert \\
      & \leq \norm{\mc{W}_x^{-1} \varphi_x}_{L^2}^2
      \norm{\mc{W}_x^{-1} (\lambda - H_{\Lambda_b})^{-1} \mc{W}_x}
      \norm{\mc{W}_x^{-1} (\lambda - H)^{-1} \mc{W}_x} \norm{\mc{W}_x f
        \mc{W}_x} \\
      & \lesssim \norm{\mc{W}_x f \mc{W}_x},
    \end{aligned}
  \end{equation*}
  where the last inequality uses Proposition~\ref{prop:greendecay} for
  the operators $H$ and $H_{\Lambda_b}$.  Note that $\mc{W}_x f
  \mc{W}_x$ is a multiplication operator
  \begin{equation*}
    \bigl((\mc{W}_x f \mc{W}_x) u\bigr)(y) =
    \exp\bigl(- 2 \gamma ( ( x - y)^2 + 1)^{1/2} \bigr) f(y) u(y),
  \end{equation*}
  Hence,
  \begin{equation*}
    \begin{aligned}
      \norm{\mc{W}_x f \mc{W}_x} & = \bigl\lVert \exp\bigl(- 2
      \gamma ( ( x - \cdot)^2 + 1)^{1/2} \bigr)
      f(\cdot) \bigr\rVert_{L^{\infty}} \\
      & \leq \exp\bigl(- 2 \gamma ( \dist(x, \supp f)^2 +
      1)^{1/2} \bigr) \norm{f}_{L^{\infty}} \\
      & \leq \exp\bigl(- 2 \gamma (\dist(x, \Lambda_b^c) - 1) \bigr)
      \norm{f}_{L^{\infty}}.
      \end{aligned}
  \end{equation*}
  The proof of \eqref{eq:derivdecaybound} is analogous and will be
  omitted.
\end{proof}

\section{Gap assumption on the subsystem}\label{gap}

In this section, we validate the  DAC algorithm and
also our analytical results by numerical examples.  By several
examples in one dimension and two dimensions, we show the accuracy of
the subsystem if \assumpref{assump} is valid.  Moreover, the loss of
accuracy for the subsystem is observed if \assumpref{assump} fails.
While in practice, we do not have easy criteria of selection of
subdomains that guarantees \eqref{assumpa}--\eqref{assumpc}, numerical
results show that they are essential for the accuracy of the method.

\begin{example}
\label{ex:insulator1d}
Consider an infinite array of atoms on a line with $X_i=i$, for
$i\in\mathbb{Z}$.  Each atom has one valence electron and spin
degeneracy is ignored.  We adopt an example from \cite{Glxe}, where
$V$ is chosen with the following form
\[
V(x)= -\sum_{i\in\mathbb{Z}}\frac{a}{\sqrt{2\pi\sigma^2}}\exp{[-(x-X_i)^2/2\sigma^2]}.
\]

Figure~\ref{fig:bandstructure} shows band structures when $a=5$,
$\sigma=0.15$, and $a=5$, $\sigma=0.45$. We will assume one electron
per atom (note that spin degeneracy is ignored).  As studied in
\cite{Glxe}, the gap is very small when $a=5$, $\sigma=0.45$ and the
system behaves like a metal.  Therefore, it is clear that for selected
parameters, the corresponding system has a gap in the spectrum
(insulator) as in Figure~\ref{fig:bandstructureinsulator}, while it
does not have a gap (metal) as in Figure~\ref{fig:bandstructuremetal}.
In other words, \eqref{assumpa}--\eqref{assumpb} are valid in
Figure~\ref{fig:bandstructureinsulator}, and invalid in
Figure~\ref{fig:bandstructuremetal}.

\begin{figure}[htbp]
\vspace{-6em}
\centering
\subfigcapskip -0.6in
\subfigure[Insulator]{%
\label {fig:bandstructureinsulator}
\includegraphics[width=2.5in]{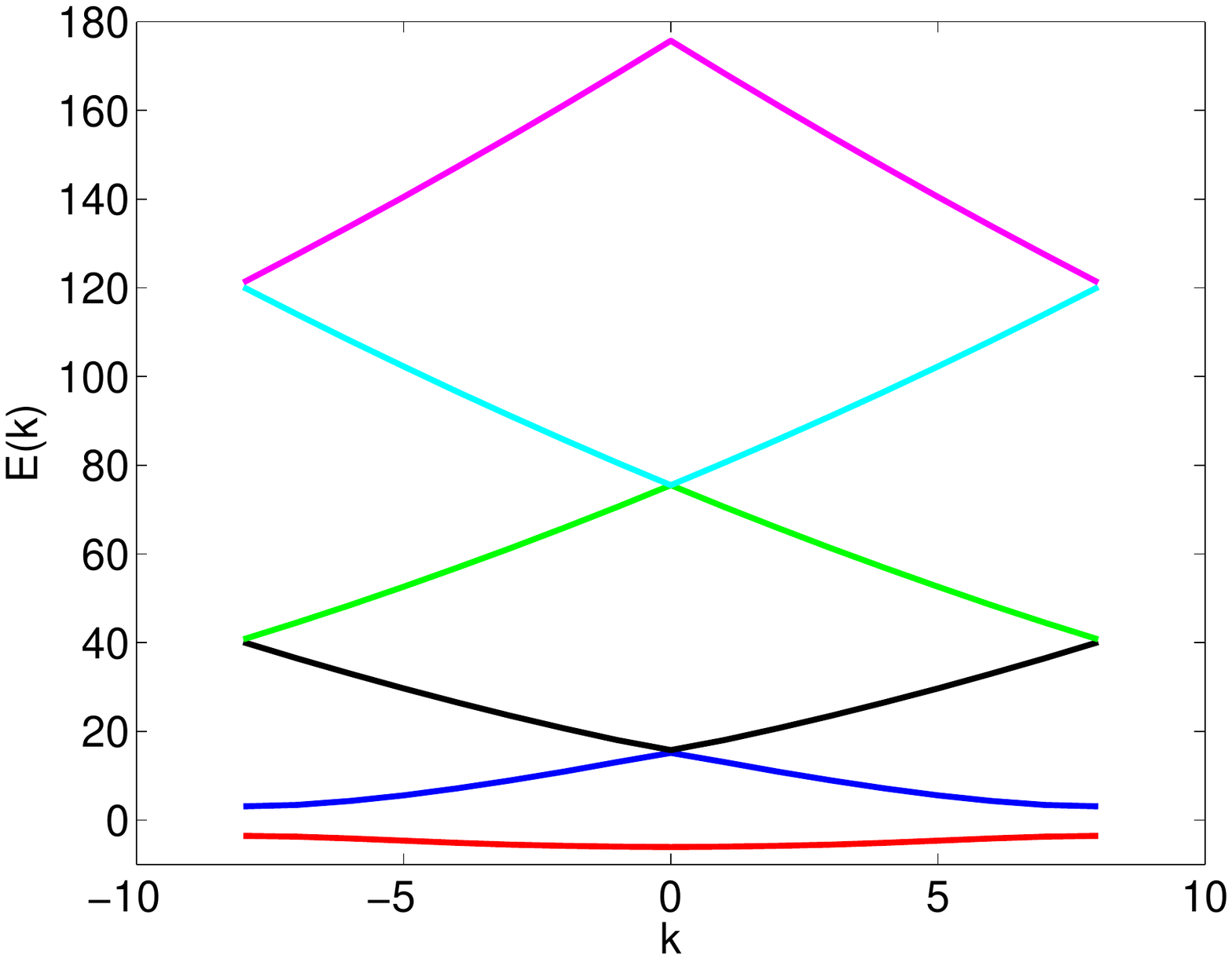}}%
\subfigure[Metal]{
\label {fig:bandstructuremetal}
\includegraphics[width=2.5in]{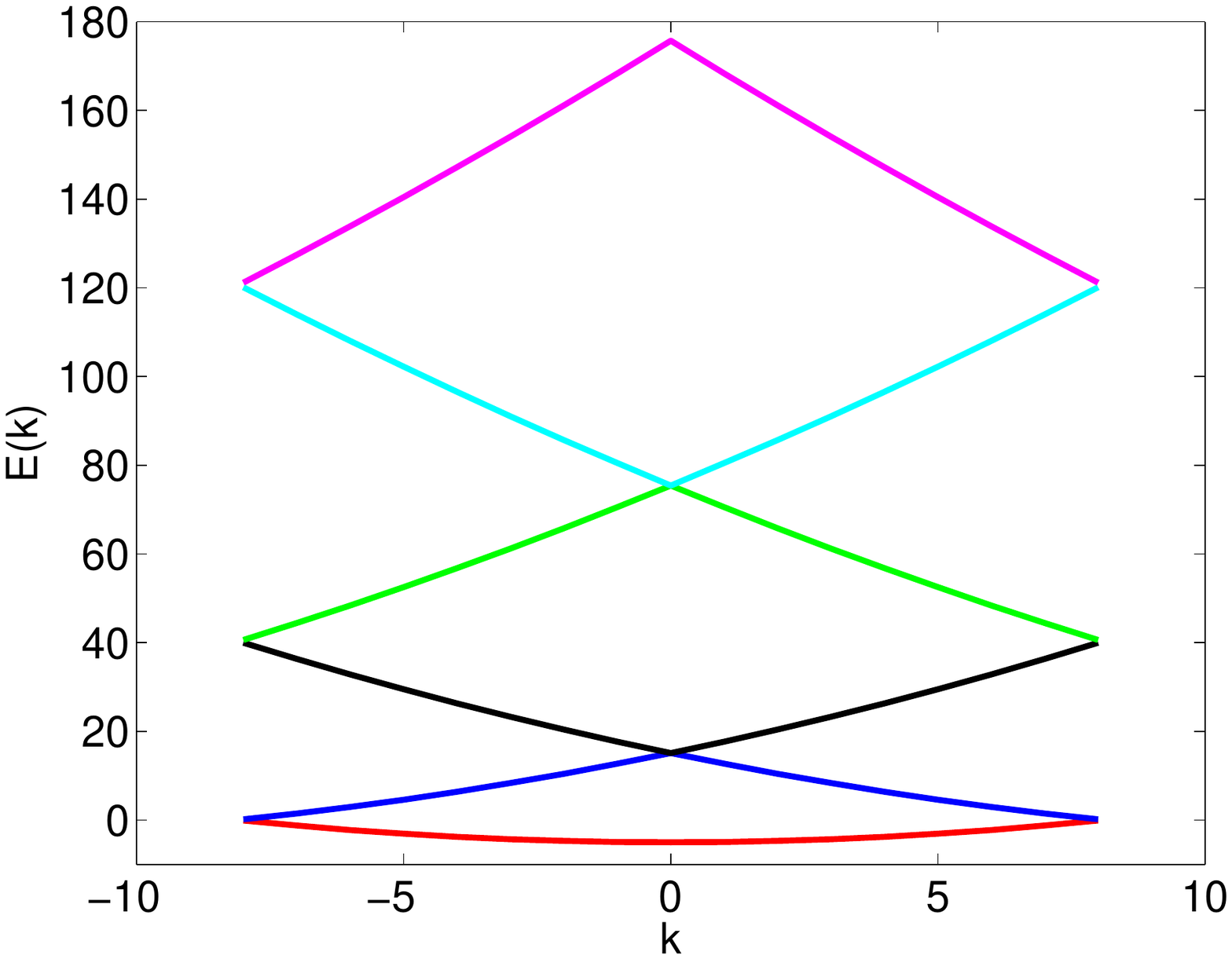}}%
\vspace{-2em}
\caption{\small Band structures for different parameters in
  \exref{ex:insulator1d}. The first band is occupied. (a) Insulator,
  where $a=5$ and $\sigma=0.15$; (b) Metal, where $a=5$ and
  $\sigma=0.45$.}\label {fig:bandstructure}
\end{figure}

Choose $\Om=\mathbb{R}$ and $\Lam_b = [0, 16]$. From the left column of Figure \ref{fig:insulator}, one can see \eqref{assumpc} is valid for $H_{\Lam_b}$ with three different boundary conditions, including Dirichlet boundary condition (DBC), Neumann boundary condition (NBC), and periodic boundary condition (PBC).
\begin{figure}[htbp]
\vspace{-6em}
\centering
\subfigcapskip -0.8in
\subfigure[Energy level with DBC]{%
\label {fig:insulatorenergyleveldbc}
\includegraphics[width=2.5in]{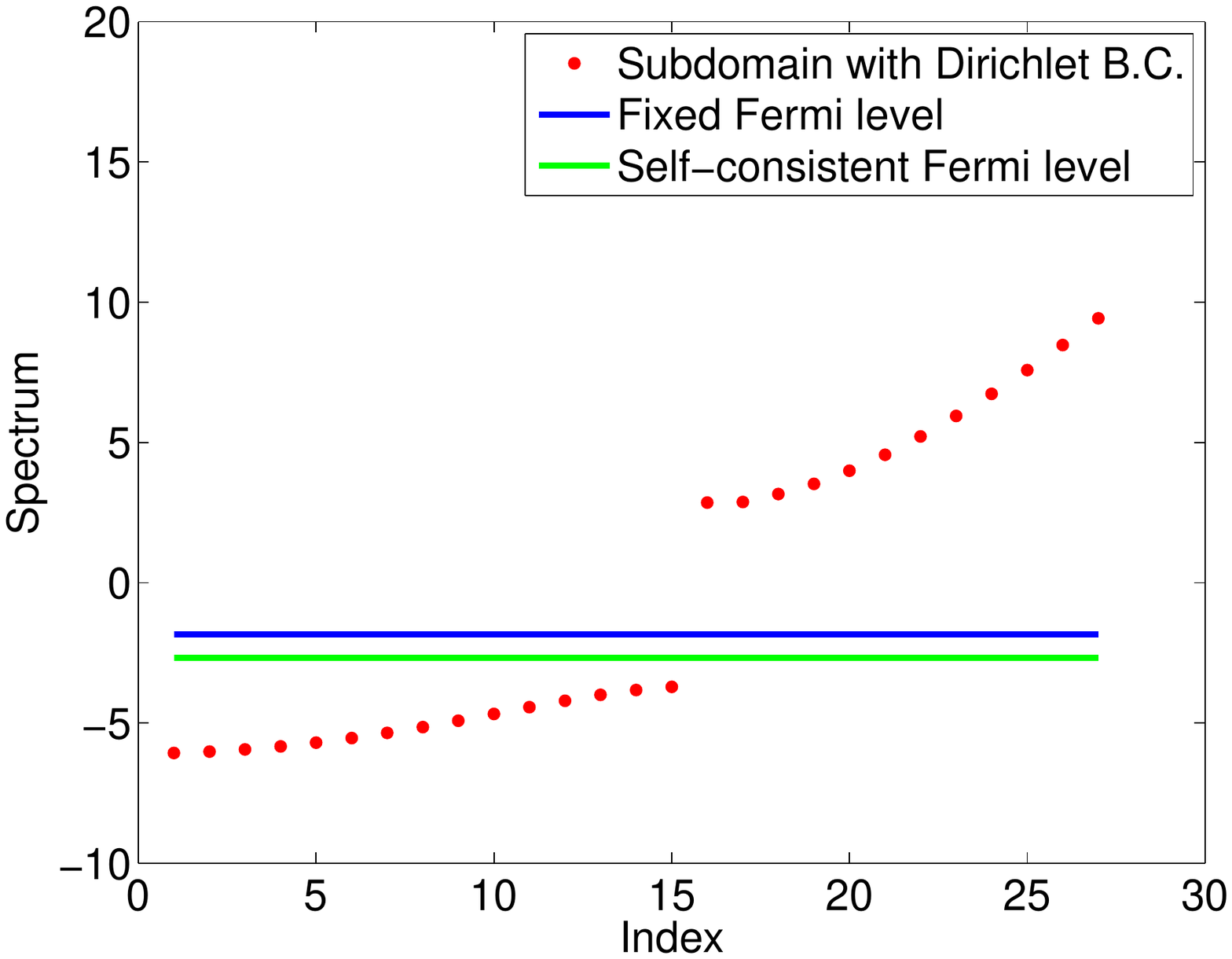}}%
\subfigure[Density difference with DBC]{%
\label {fig:insulatordensitydiffdbc}
\includegraphics[width=2.5in]{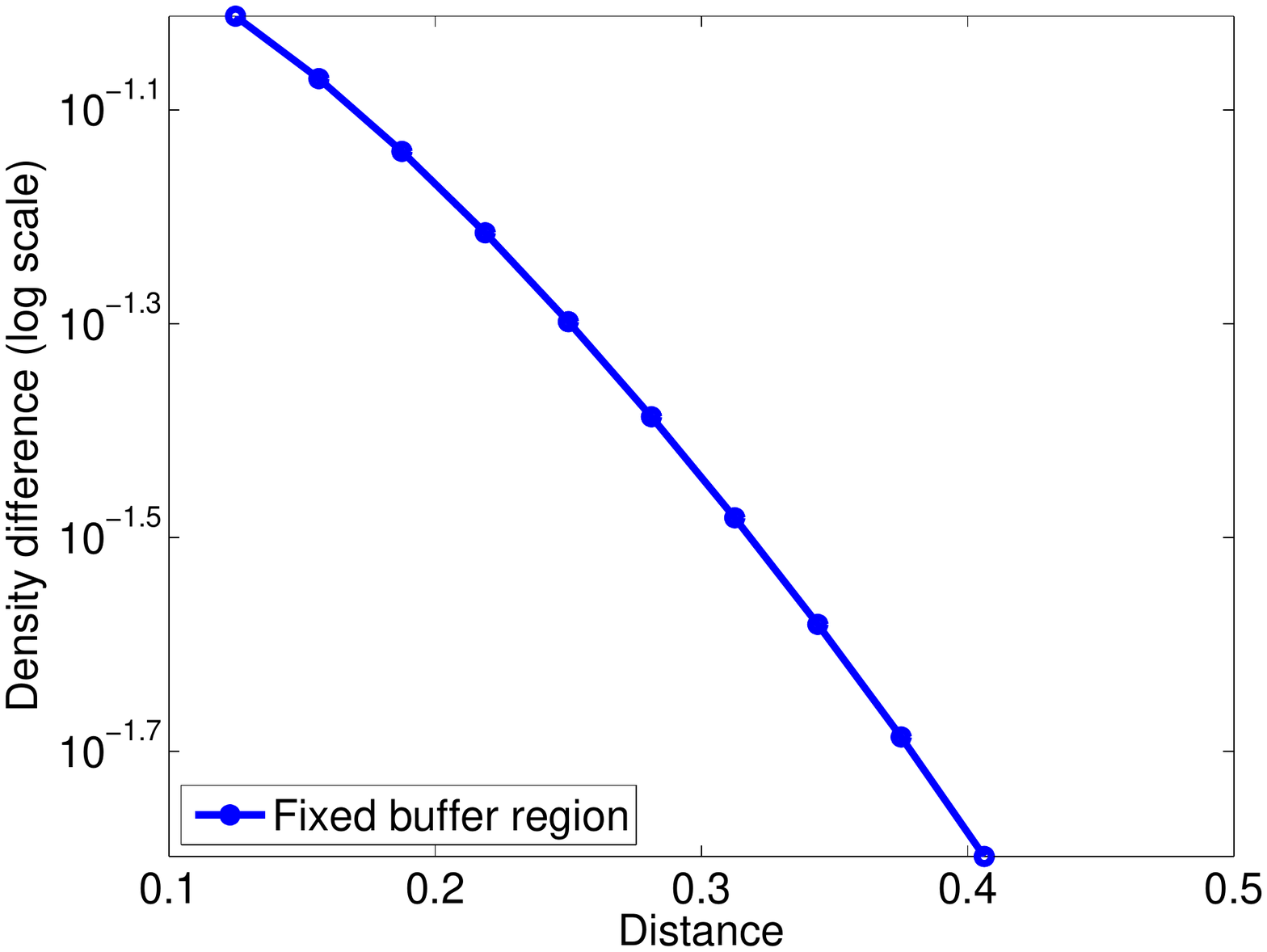}}%
\\
\vspace{-8em}
\subfigure[Energy level with NBC]{
\label {fig:insulatorenergylevelnbc}
\includegraphics[width=2.5in]{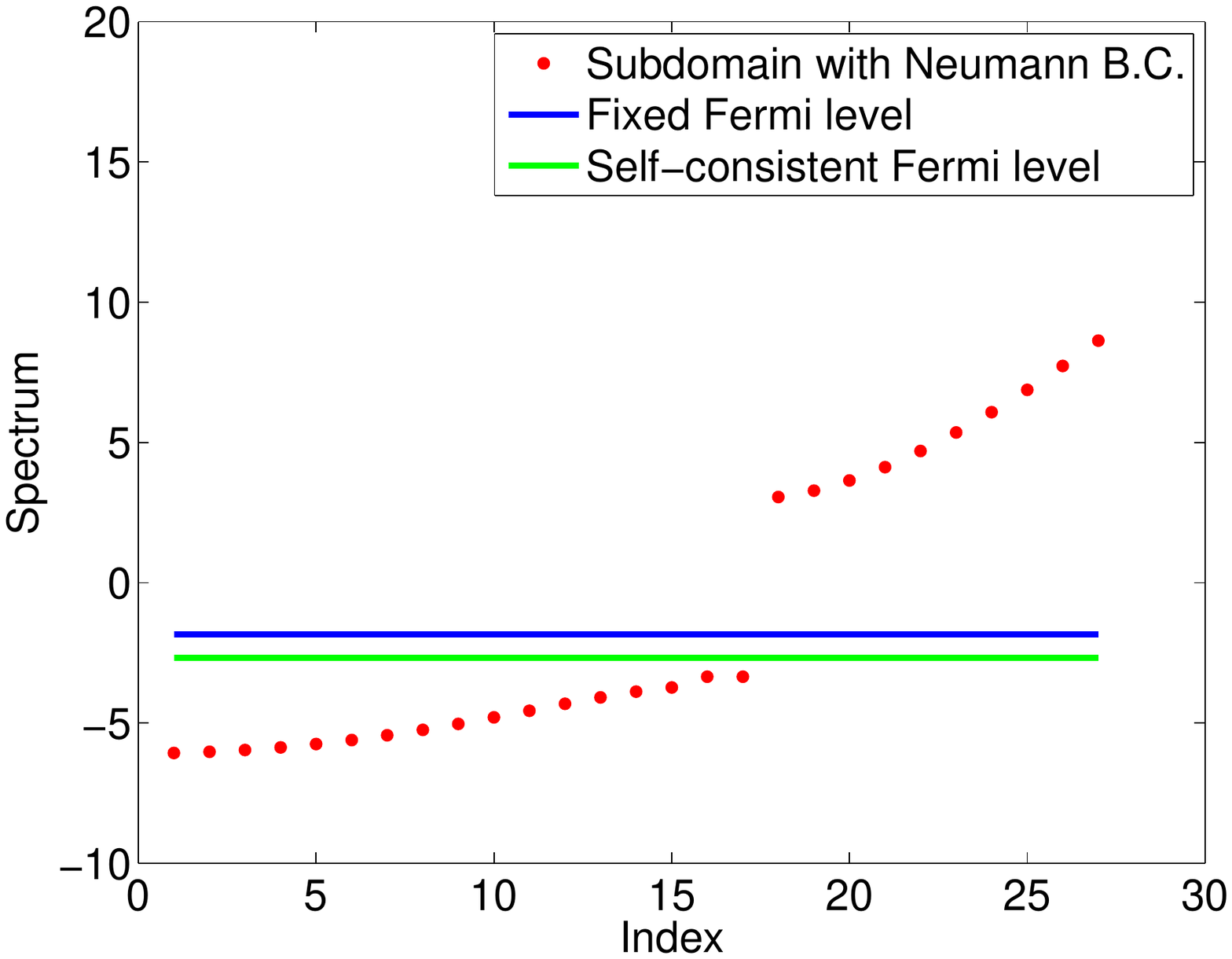}}%
\subfigure[Density difference with NBC]{
\label {fig:insulatordensitydiffnbc}
\includegraphics[width=2.5in]{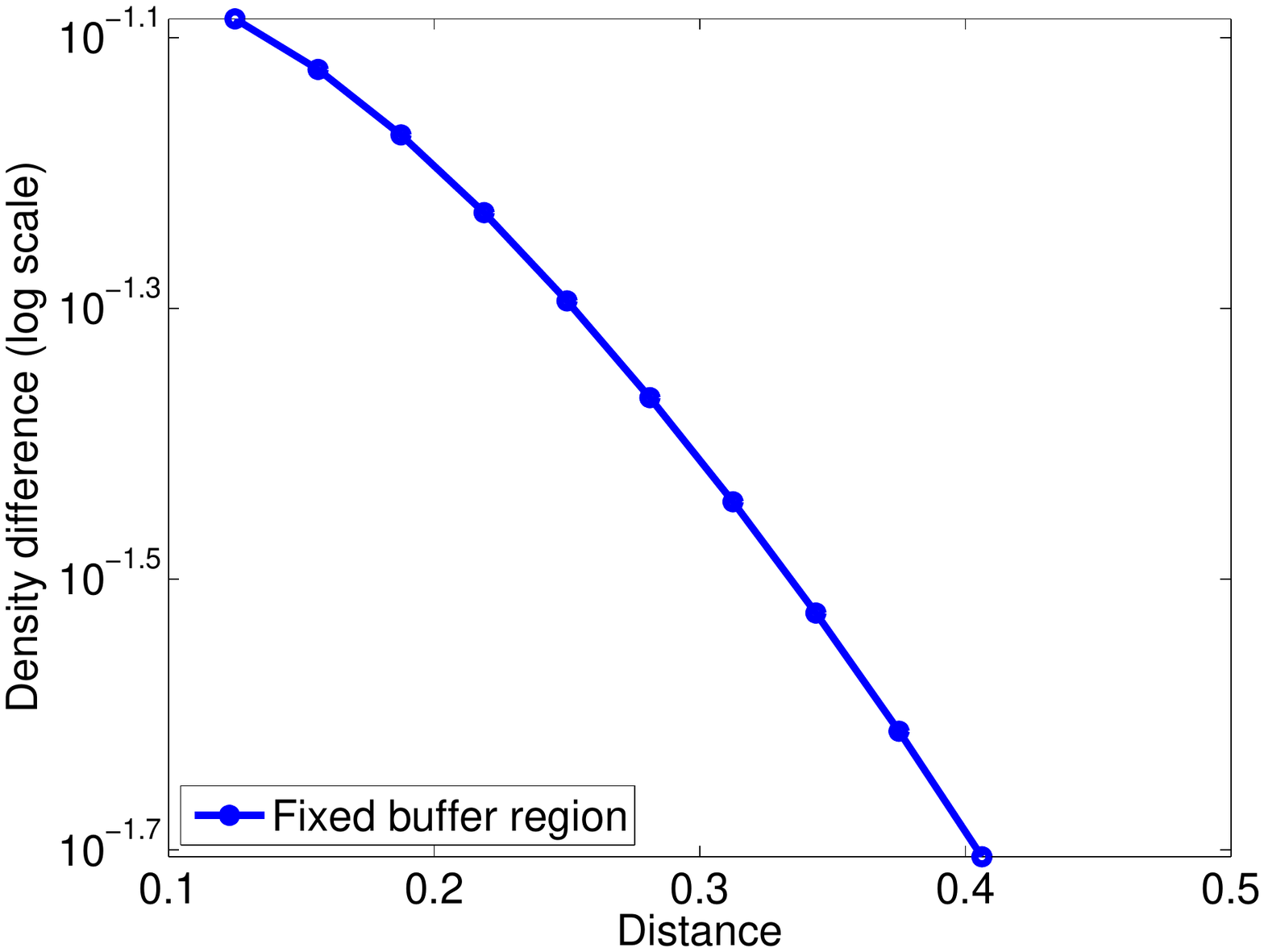}}%
\\
\vspace{-8em}
\subfigure[Energy level with PBC]{
\label {fig:insulatorenergylevelpbc}
\includegraphics[width=2.5in]{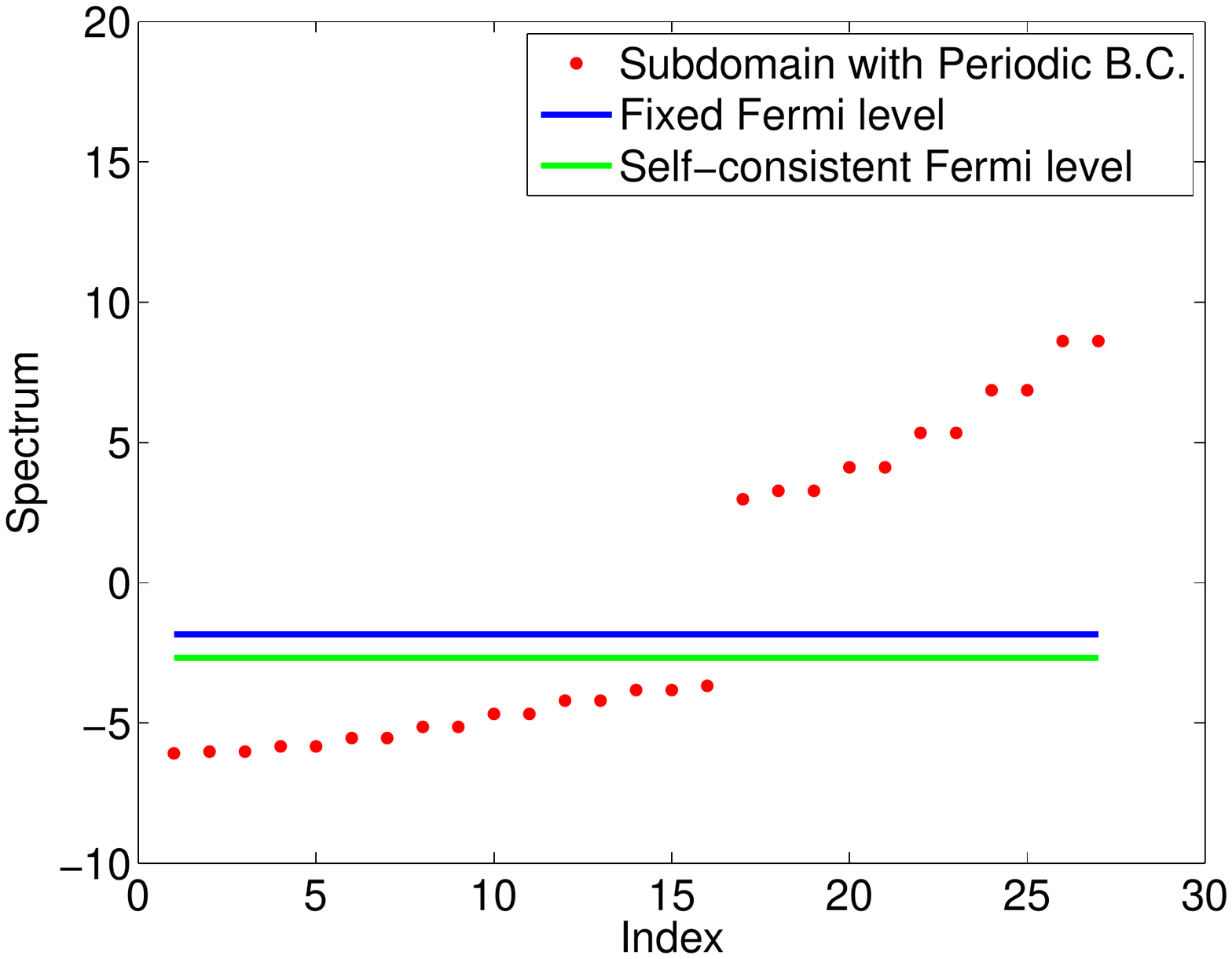}}%
\subfigure[Density difference with PBC]{
\label {fig:insulatordensitydiffpbc}
\includegraphics[width=2.5in]{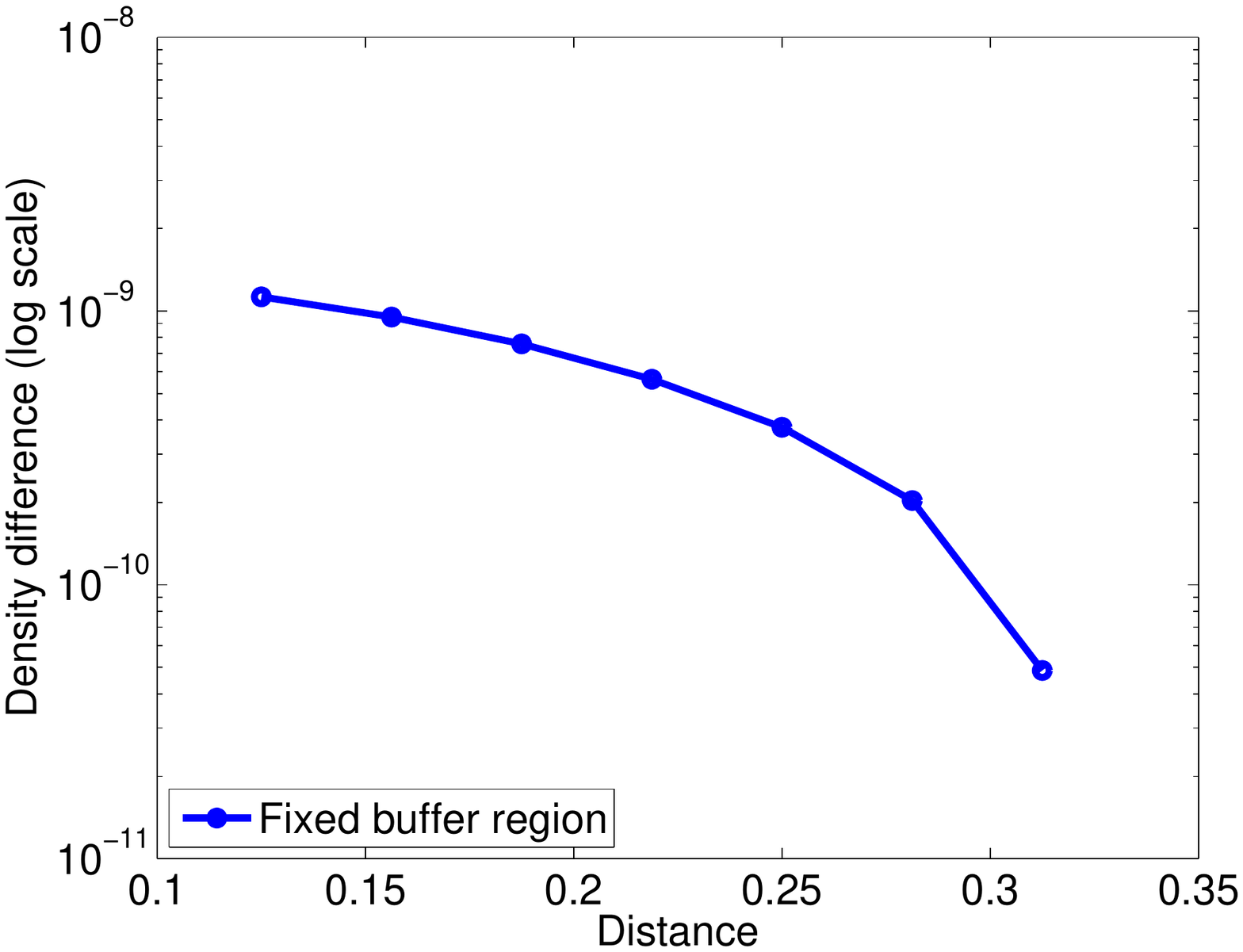}}%
\vspace{-4em}
\caption{\small Energy levels of the subsystem, and
  $\abs{\rho(x)-\rho_{\Lam}(x)}$ as a function of $x$ for $\Lam=[0.1,
  15.9]$ and $\Lam_b=[0,16]$ with different boundary conditions in
  \exref{ex:insulator1d}. (a) Energy level with DBC; (b) Density
  difference with DBC; (c) Energy level with NBC; (b) Density
  difference with NBC; (e) Energy level with PBC; (f) Density
  difference with PBC.  In the left column, red dots denote energy
  levels of the subsystem, blue line denotes the fixed Fermi level
  $\eps_F=(\eps_{\text{occ}}+\eps_{\text{unocc}})/2$, and green line
  denotes Fermi level obtained by the DAC method in a self-consistent
  manner, respectively. In the right column, density difference is
  plotted in the log scale and decays exponentially, which verifies
  \eqref{expdecay}.}\label {fig:insulator}
\end{figure}
Fix $\eps_F=(\eps_{\text{occ}}+\eps_{\text{unocc}})/2$ and $\Lam=[0.1,
15.9]$.  Note that $\dist(\Lam,\Lam_b^c)\ge0.1$, satisfying the
assumption on the subdomain and the buffer region. This condition
holds true for all examples in the section.  We compare
$\abs{\rho(x)-\rho_{\Lam}(x)}$ as a function of $x$ for three boundary
conditions in the right column of Figure~\ref{fig:insulator}.  Density
differences are plotted in the log scale and decay exponentially,
which verifies \eqref{expdecay}.  Quantitatively,
the method has the best performance
when PBC is used. Moreover, for
the self-consistent Fermi level, density differences behave in the
same manner.
%Although the analysis is done for the fixed Fermi level, numerical results imply it is also the case for the self-consistent Fermi level.

Furthermore, for $\Lam = [0, 16]$ and a series of enlarged buffer regions $\Lam_b = [0-x, 16+x]$ ($x\ge 0.1$), we compare $\abs{\rho(0)-\rho_{\Lam}(0)}$ as a function of $x$ in Figure \ref{fig:insulatorbuffer}. Density differences are plotted in the log scale and decay exponentially, since \eqref{assumpc} is valid for the series of $\Lam_b$.
\begin{figure}[htbp]
\vspace{-4em}
\centering
\subfigcapskip -0.5in
\subfigure[Density difference with DBC]{%
\label {fig:insulatordensitydiffdbcbuffer}
\includegraphics[width=2.0in]{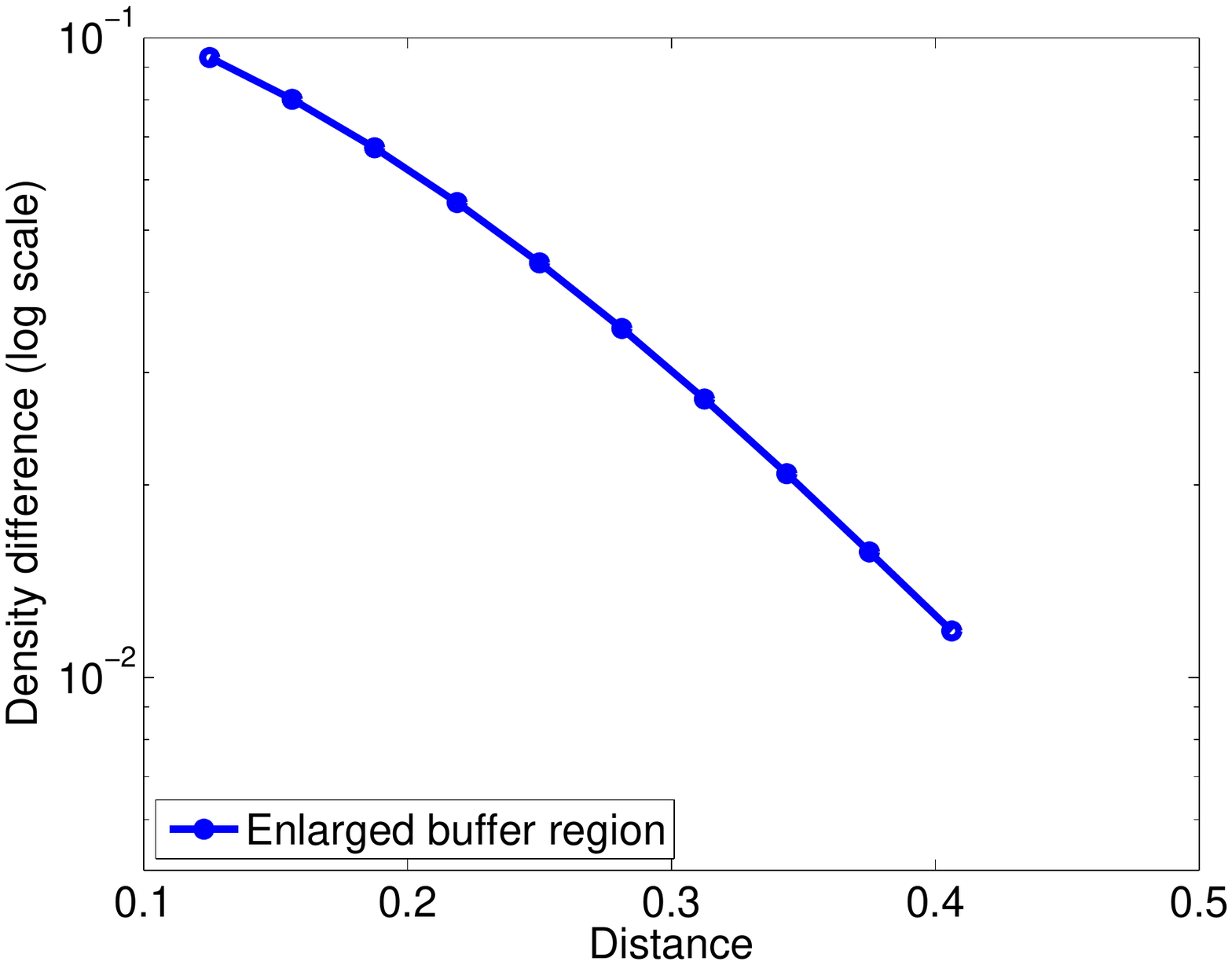}}%
\subfigure[Density difference with NBC]{
\label {fig:insulatordensitydiffnbcbuffer}
\includegraphics[width=2.0in]{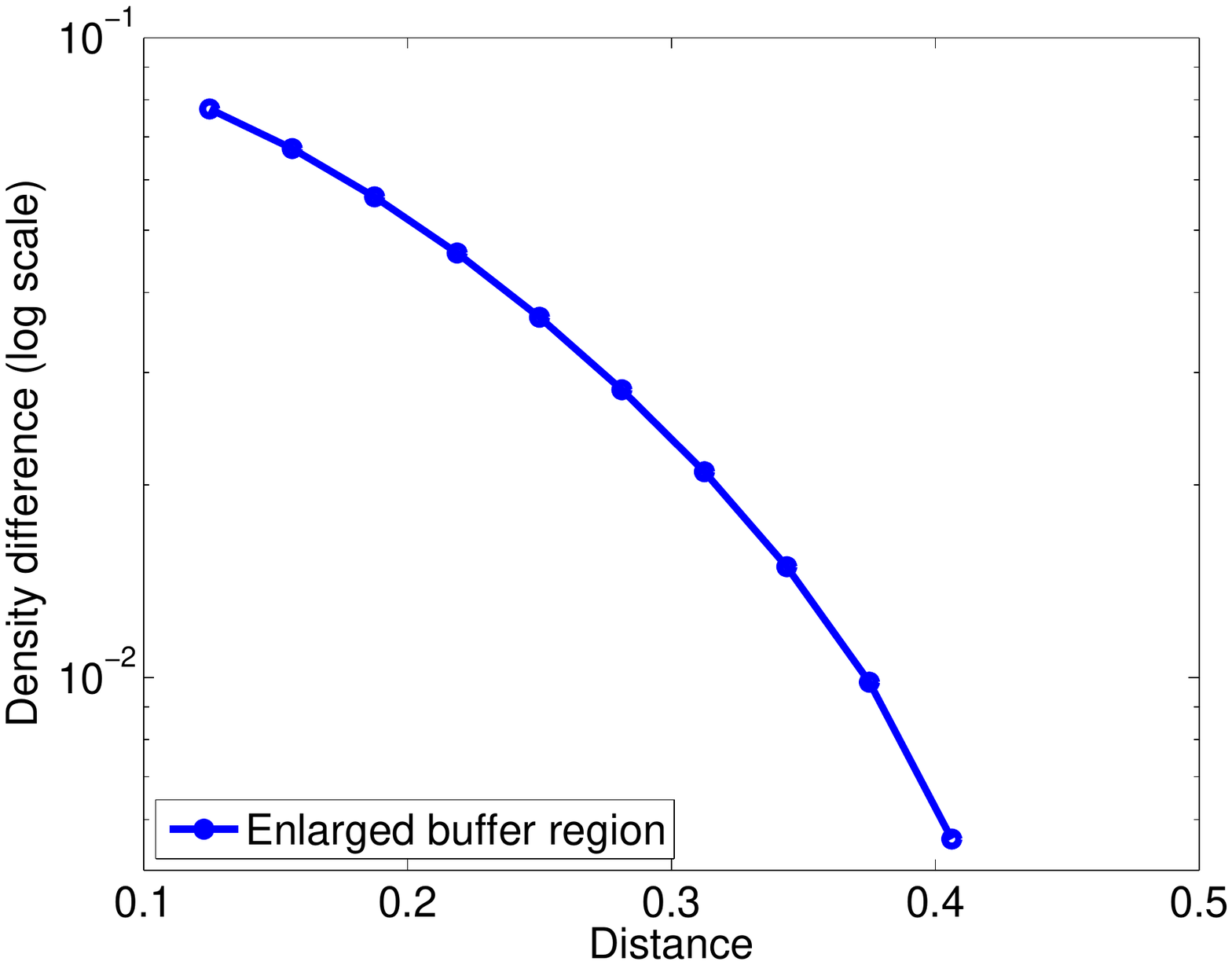}}%
\subfigure[Density difference with PBC]{
\label {fig:insulatordensitydiffpbcbuffer}
\includegraphics[width=2.0in]{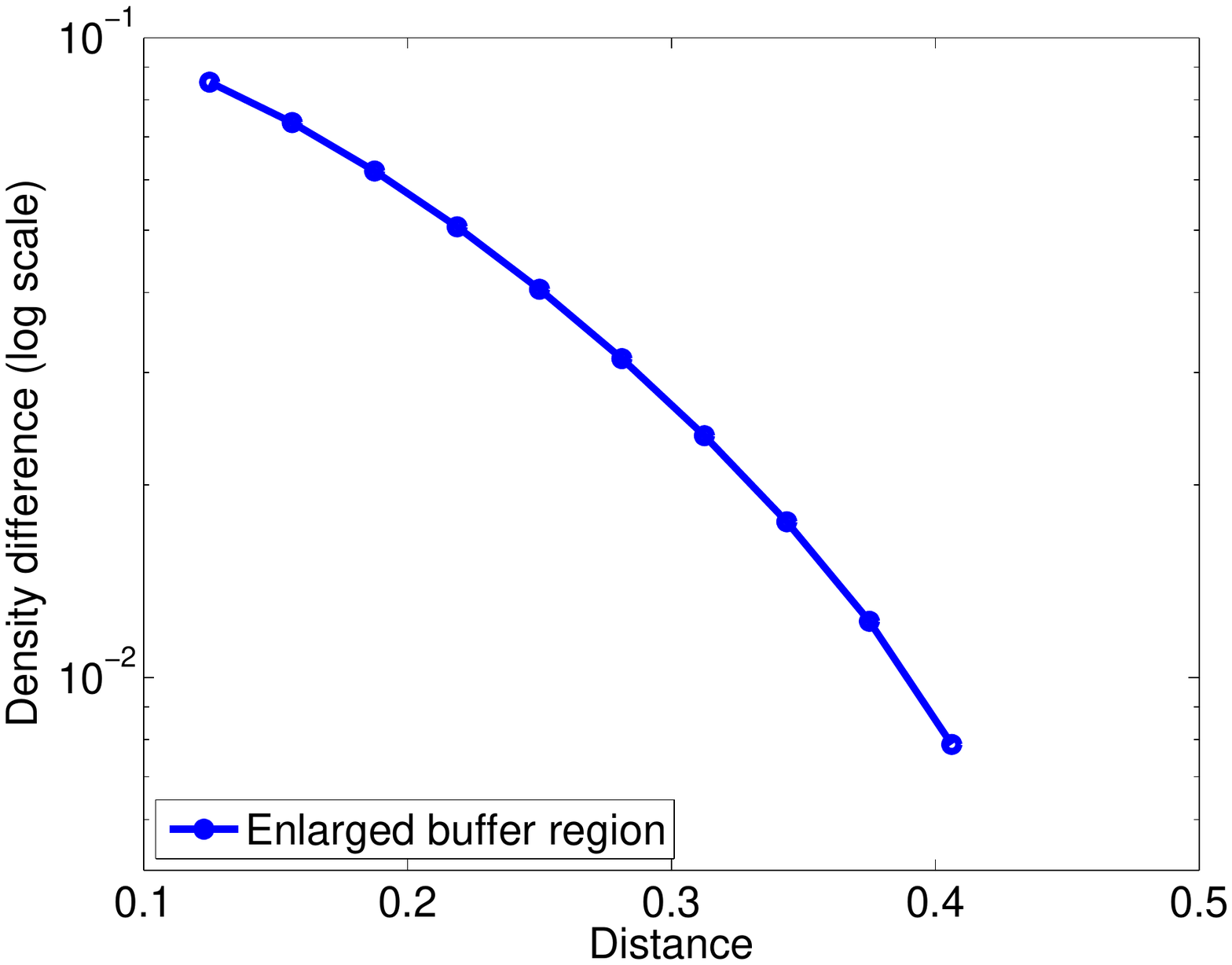}}%
\vspace{-2em}
\caption{\small  $\abs{\rho(0)-\rho_{\Lam}(0)}$ as a function of $x$  for $\Lam = [0, 16]$ and a series of enlarged buffer regions $\Lam_b = [0-x, 16+x]$ ($x\ge 0.1$) with different boundary conditions in \exref{ex:insulator1d}. (a) Density difference with DBC; (b) Density difference with NBC;
(c) Density difference with PBC.
Density difference is plotted in the log scale. Exponential decay rates are observed in all cases since
\assumpref{assump} is valid.}\label {fig:insulatorbuffer}
\end{figure}

\end{example}

\begin{example}
\label{ex:metal1d}
We now choose $a=5$ and $\sigma=0.45$ such that
\eqref{assumpa}--\eqref{assumpb} are invalid as shown in
Figure~\ref{fig:bandstructuremetal}. Set $\Lam_b = [0, 16]$ and $\Lam
= [0.1, 15.9]$.  From the left column of Figure~\ref{fig:metal}, one
can see \eqref{assumpc} is invalid for $H_{\Lam_b}$ with all three
boundary conditions.
\begin{figure}[htbp]
\vspace{-6em}
\centering
\subfigcapskip -0.8in
\subfigure[Energy level with DBC]{%
\label {fig:metalenergyleveldbc}
\includegraphics[width=2.5in]{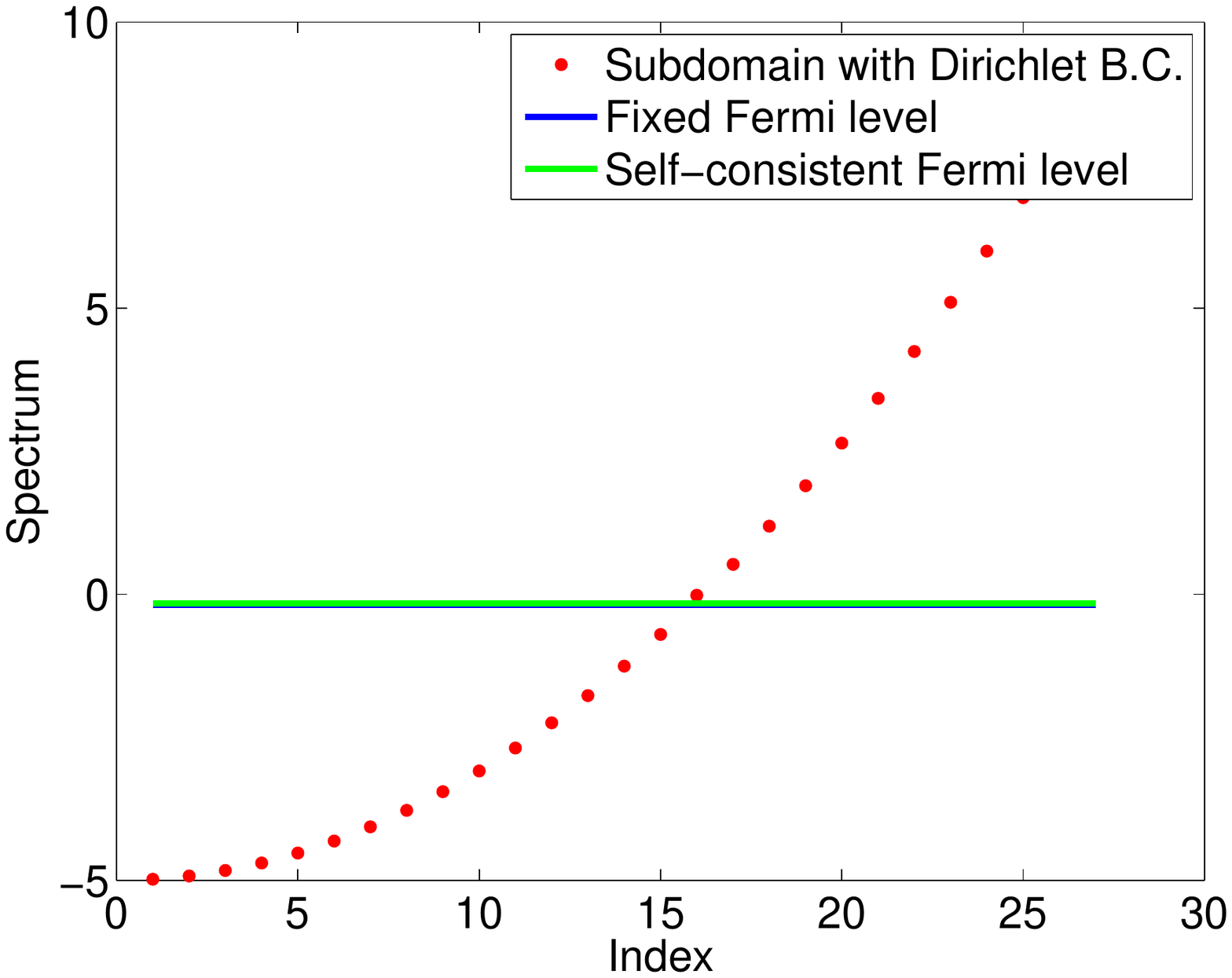}}%
\subfigure[Density difference with DBC]{%
\label {fig:metaldensitydiffdbc}
\includegraphics[width=2.5in]{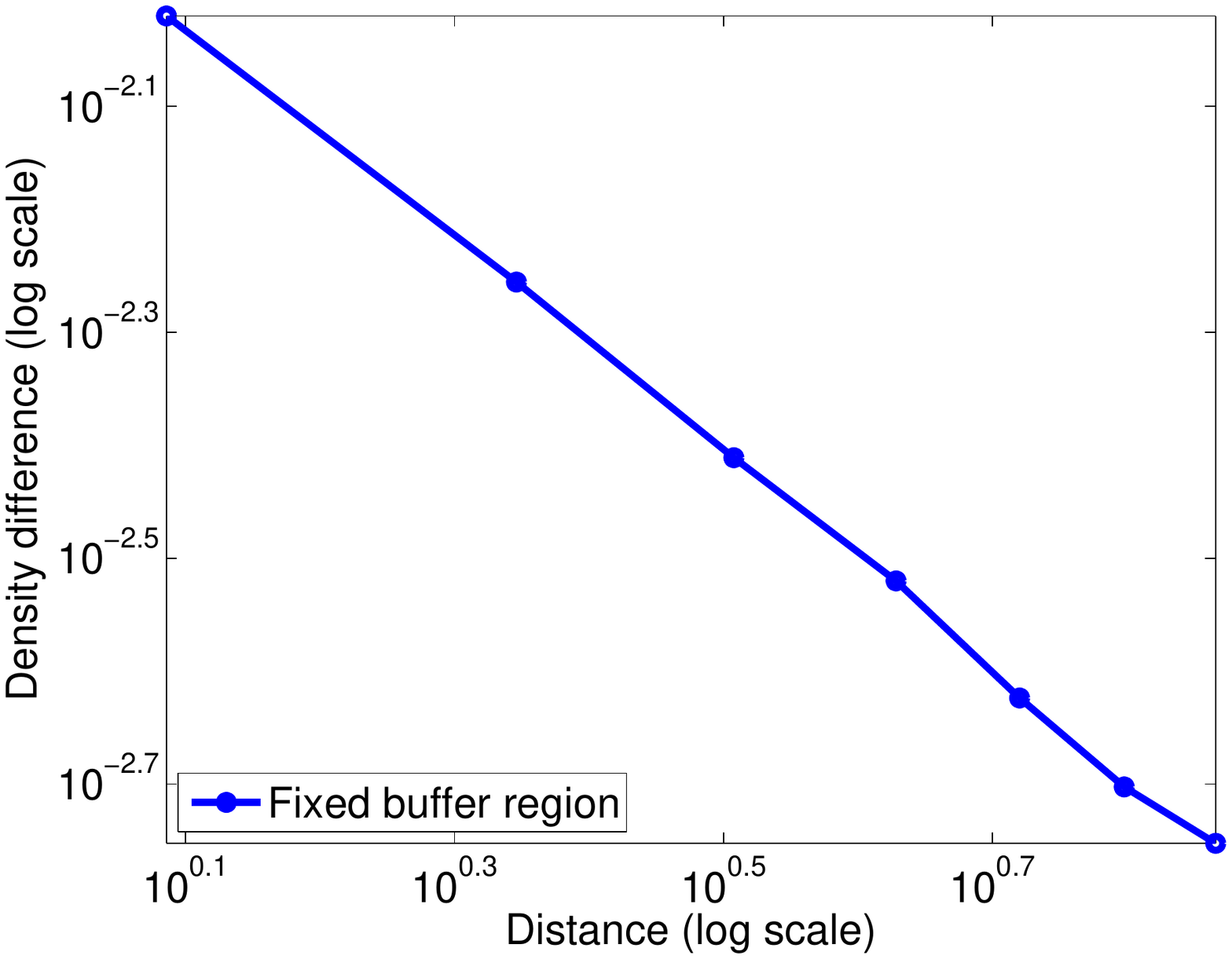}}%
\\
\vspace{-8em}
\subfigure[Energy level with NBC]{
\label {fig:metalenergylevelnbc}
\includegraphics[width=2.5in]{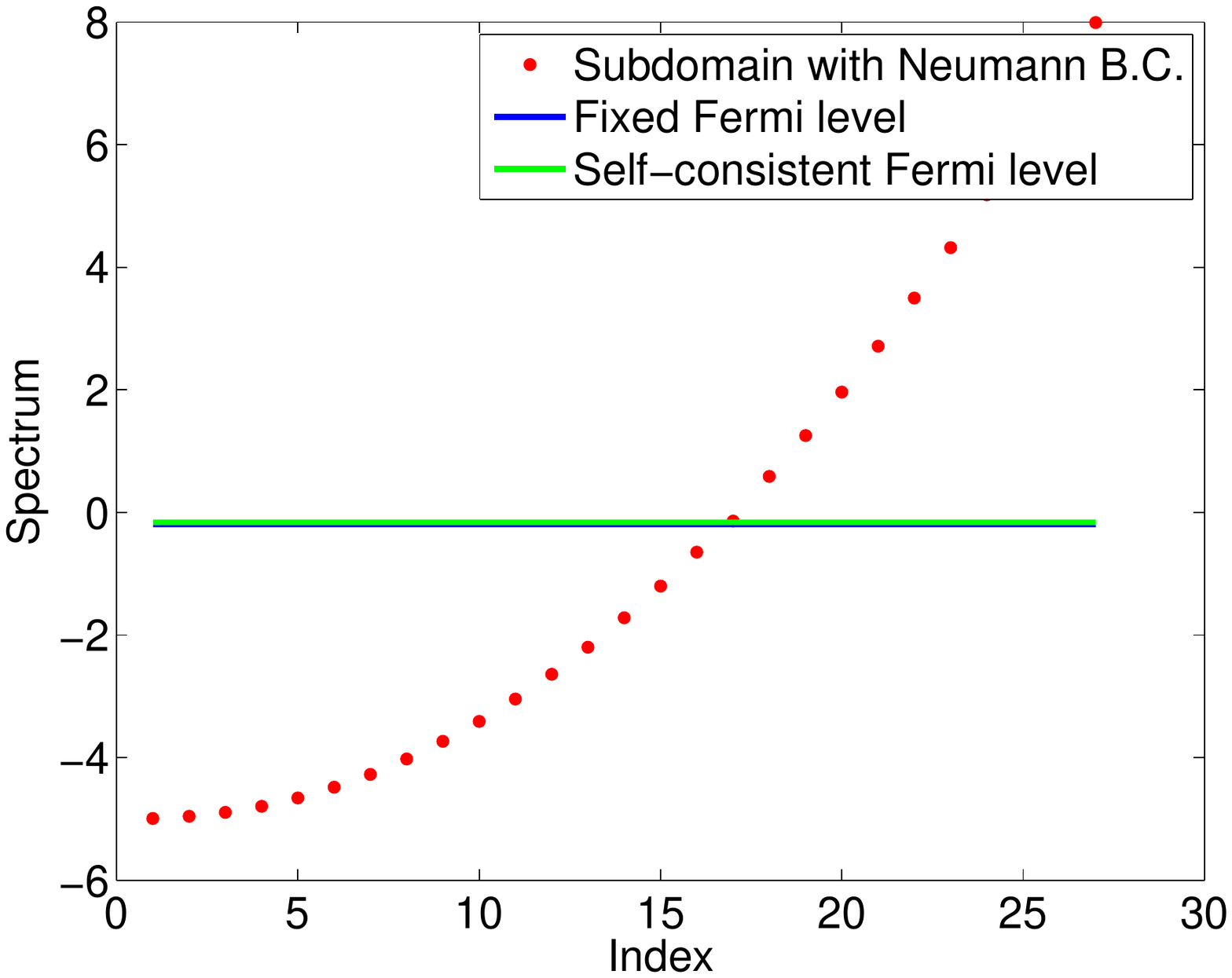}}%
\subfigure[Density difference with NBC]{
\label {fig:metaldensitydiffnbc}
\includegraphics[width=2.5in]{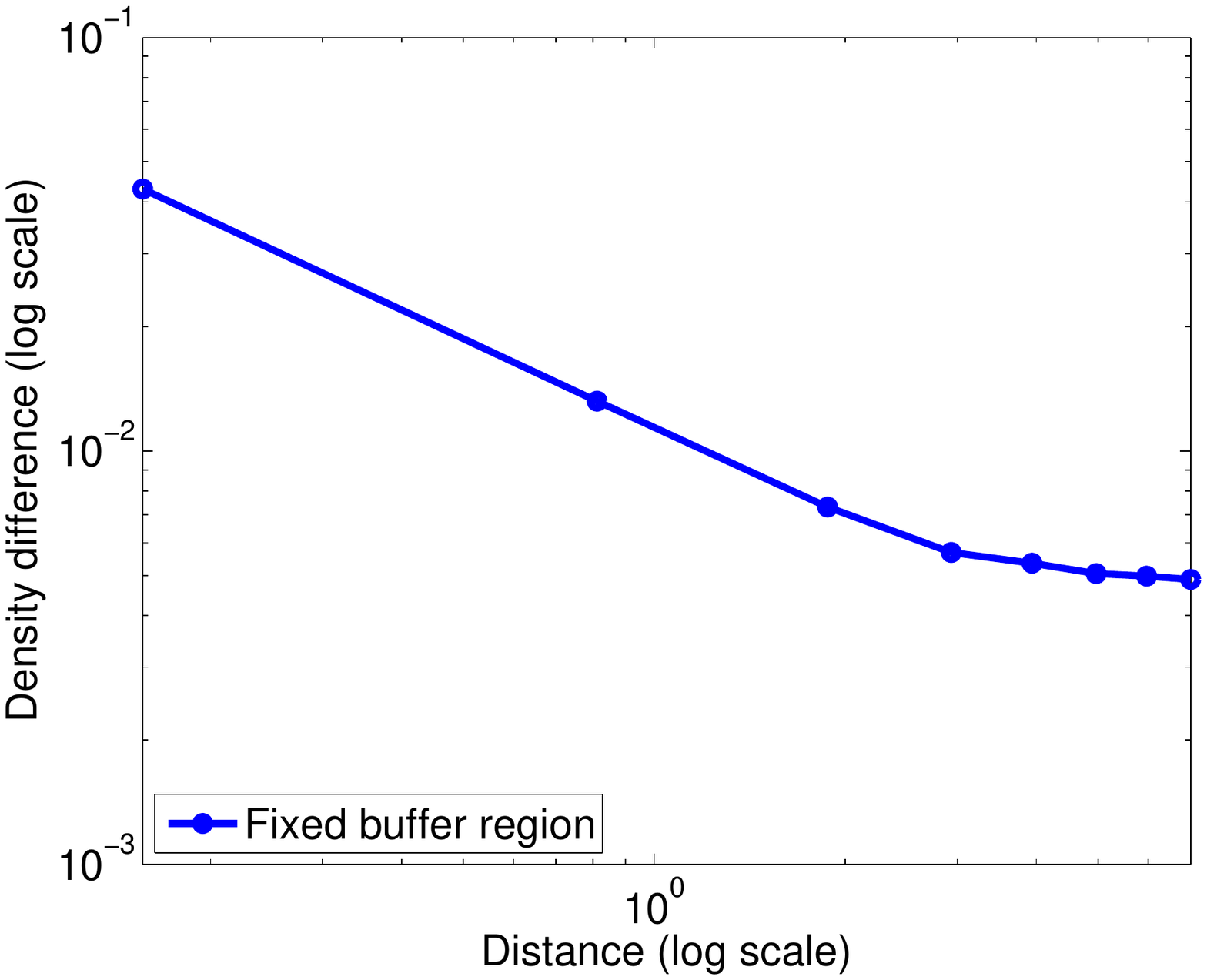}}%
\\
\vspace{-8em}
\subfigure[Energy level with PBC]{
\label {fig:metalenergylevelpbc}
\includegraphics[width=2.5in]{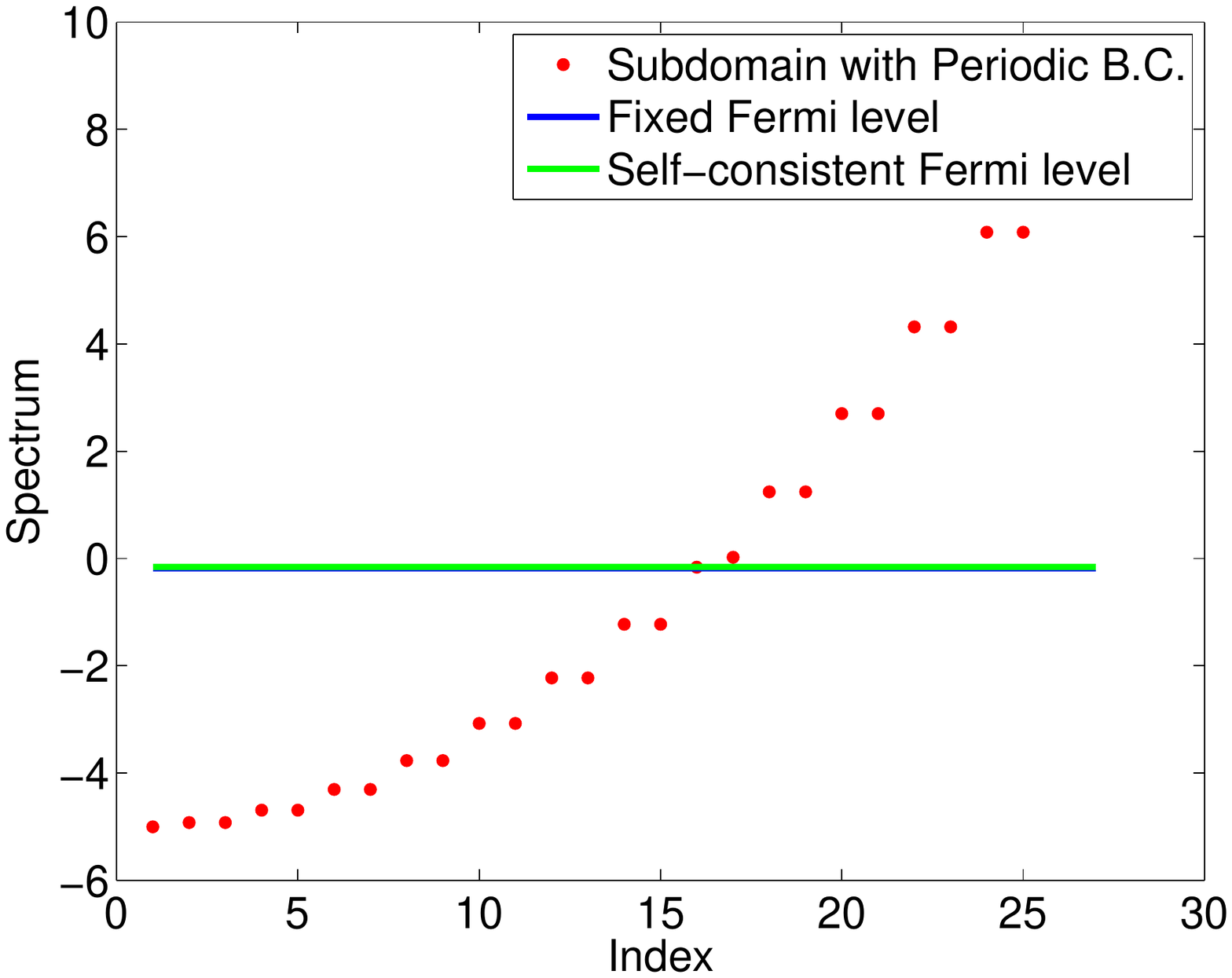}}%
\subfigure[Density difference with PBC]{
\label {fig:metaldensitydiffpbc}
\includegraphics[width=2.5in]{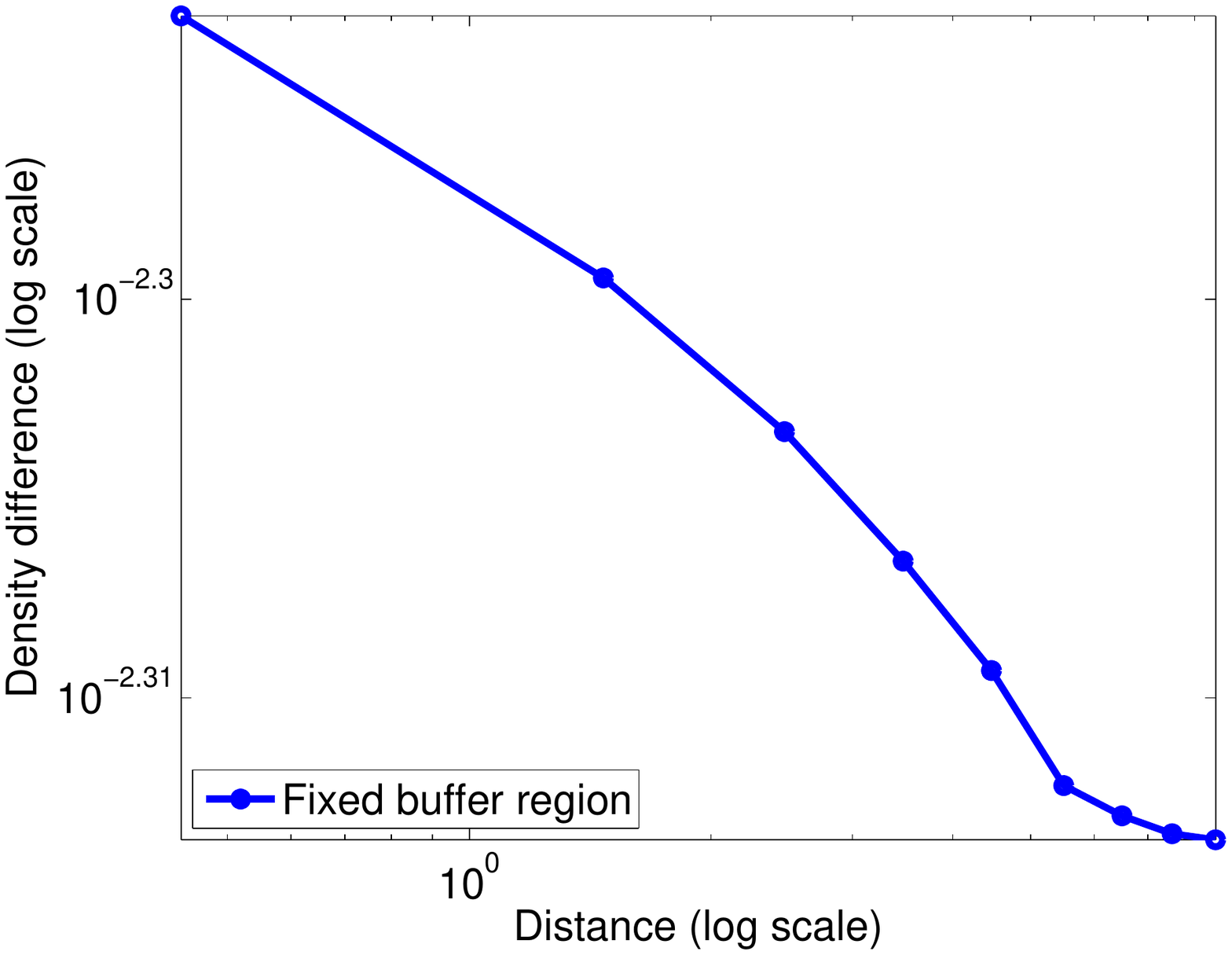}}%
\vspace{-4em}
\caption{\small  Energy levels of the subsystem, and $\abs{\rho(x)-\rho_{\Lam}(x)}$ as a function of $x$ in the log-log scale for $\Lam = [0.1, 15.9]$ and $\Lam_b = [0, 16]$ with different boundary conditions in \exref{ex:metal1d}.  (a) Energy level with DBC; (b) Density difference with DBC; (c) Energy level with NBC; (b) Density difference with NBC; (e) Energy level with PBC; (f) Density difference with PBC.
In the left column, red dots denote energy levels of the subsystem, blue line denotes the fixed Fermi level
$\eps_F=(\eps_{\text{occ}}+\eps_{\text{unocc}})/2$, and green line denotes Fermi level
obtained by the DAC method in a self-consistent manner, respectively. Only algebraic decay rate is observed
due the failure of \assumpref{assump}.}\label {fig:metal}
\end{figure}
Fix $\eps_F=(\eps_{\text{occ}}+\eps_{\text{unocc}})/2$. We compare
$\abs{\rho(x)-\rho_{\Lam}(x)}$ as a function of $x$ in the log-log
scale for three boundary conditions in the right column of Figure
\ref{fig:metal}.  Density differences decay algebraically since
\assumpref{assump} fails. Quantitatively, the method has the best performance
when PBC is used.

Furthermore, for $\Lam=[0, 16]$ and a series of enlarged buffer regions $\Lam_b=[0-x, 16+x]$ ($x\ge 0.1$), we compare $\abs{\rho(0)-\rho_{\Lam}(0)}$ as a function of $x$ in the log-log scale in Figure \ref{fig:metalbuffer}. Density differences decay algebraically as a result of the invalidity of \eqref{assumpc} for the series of $\Lam_b$.
\begin{figure}[htbp]
\vspace{-4em}
\centering
\subfigcapskip -0.5in
\subfigure[Density difference with DBC]{%
\label {fig:metaldensitydiffdbcbuffer}
\includegraphics[width=2.0in]{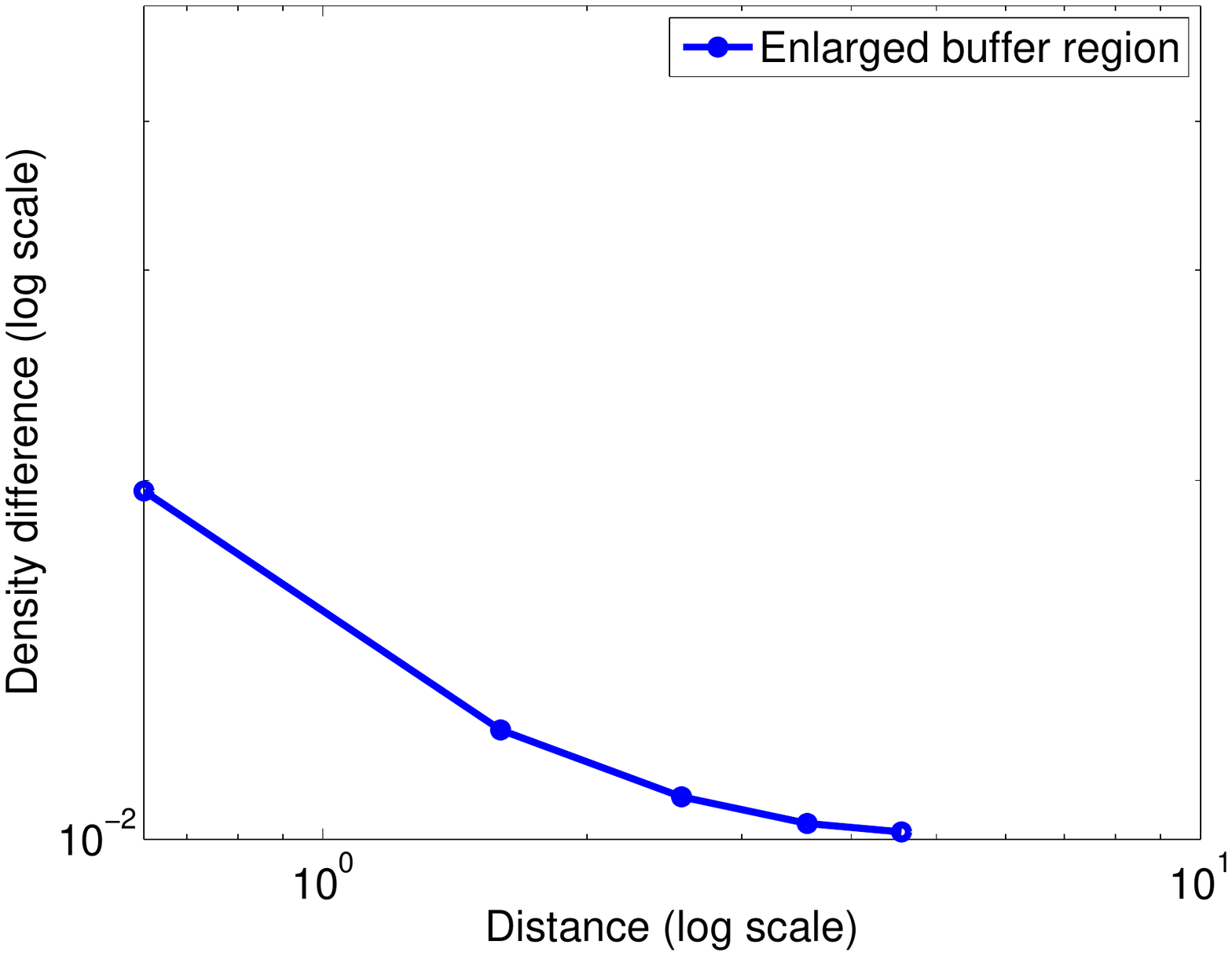}}%
\subfigure[Density difference with NBC]{
\label {fig:metaldensitydiffnbcbuffer}
\includegraphics[width=2.0in]{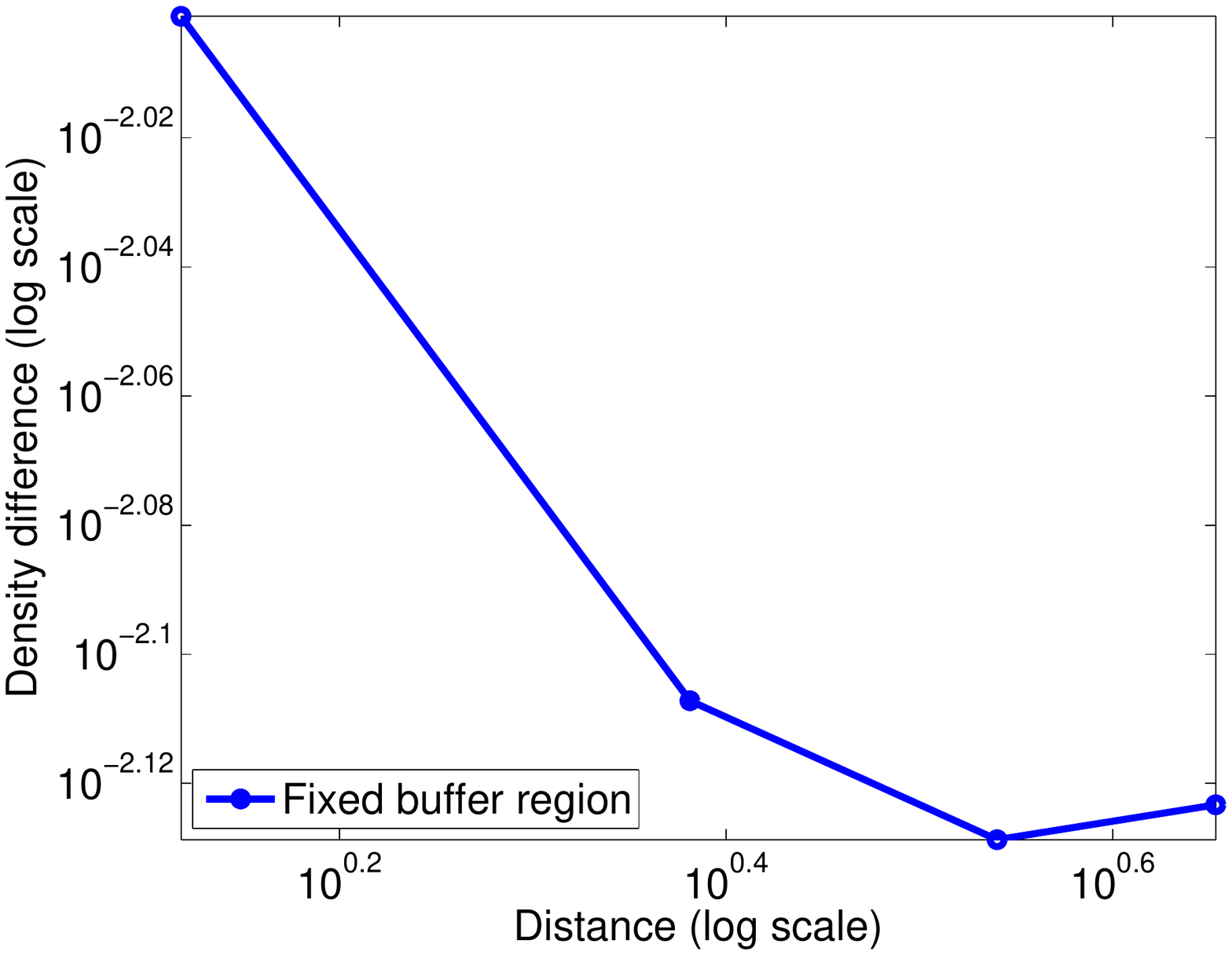}}%
\subfigure[Density difference with PBC]{
\label {fig:metaldensitydiffpbcbuffer}
\includegraphics[width=2.0in]{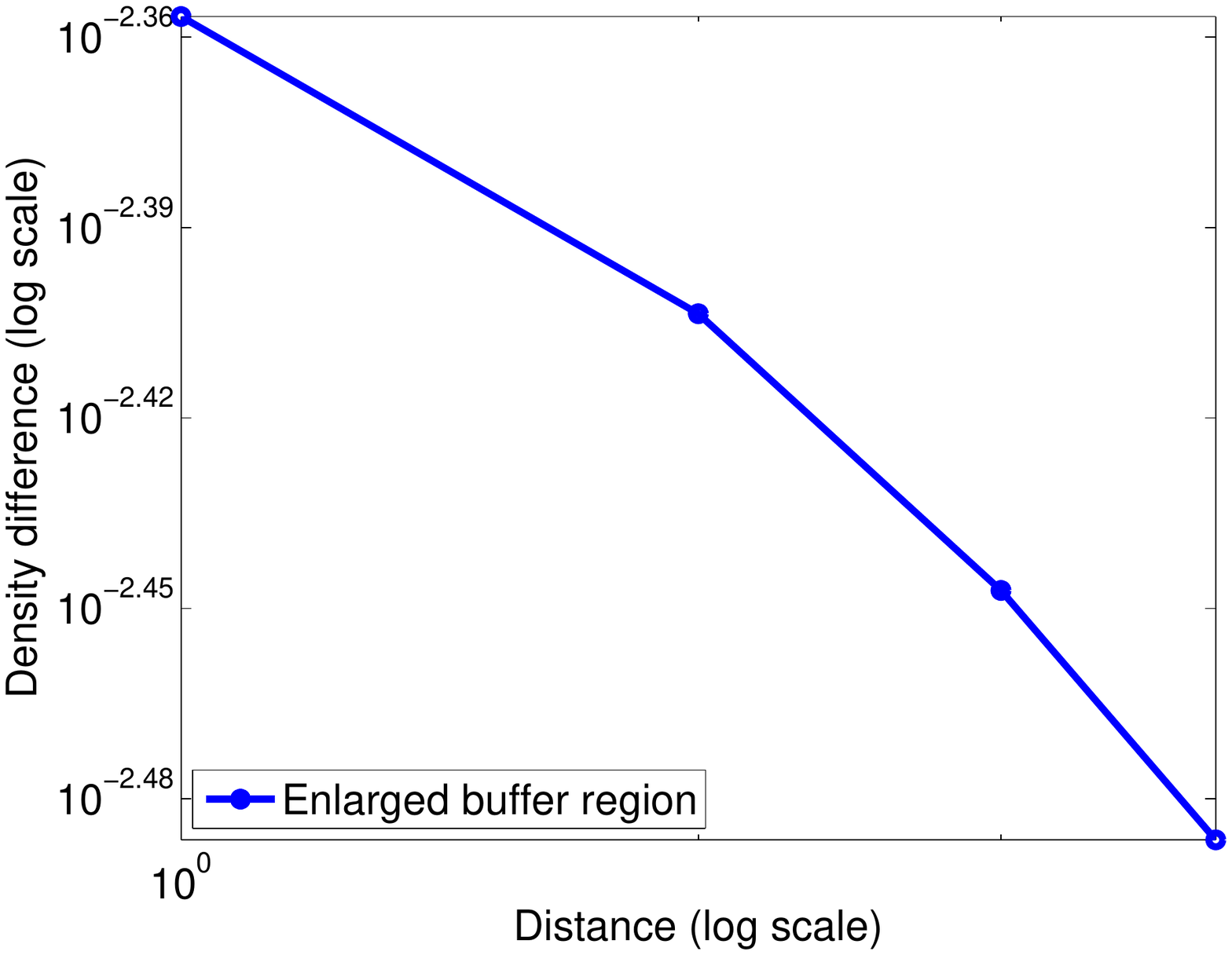}}%
\vspace{-2em}
\caption{\small  $\abs{\rho(0)-\rho_{\Lam}(0)}$ as a function of $x$ in the log-log scale for $\Lam=[0, 16]$ and a series of enlarged buffer regions $\Lam_b=[0-x, 16+x]$ ($x\ge 0.1$) with different boundary conditions in \exref{ex:metal1d}.  (a) Density difference with DBC; (b) Density difference with NBC; (c) Density difference with PBC. Algebraic decay rates are observed in all cases due to the failure of \assumpref{assump}.}\label {fig:metalbuffer}
\end{figure}

\end{example}

\begin{example}[Insulating global system, gap assumption invalid for
  the subsystem]
  \label{ex:insulatormetal} For the next example, consider
\[
V(x)=
\begin{cases}
-a, & 0\leq x\leq 8; \\
-b, & 8 < x < 16;    \\
\text{periodic extension}, & \text{otherwise}.
\end{cases}
\]
We take $a=5$ and $b=0$ and plot the band structure of this problem in
Figure \ref{fig:insulatormetalenergylevelentire}.  It is clear that
this system is an insulating system.
\begin{figure}[htbp]
\vspace{-6em}
\centering
\subfigcapskip -1.0in
\subfigure[Global domain]{%
\label {fig:insulatormetalenergylevelentire}
\includegraphics[width=3.0in]{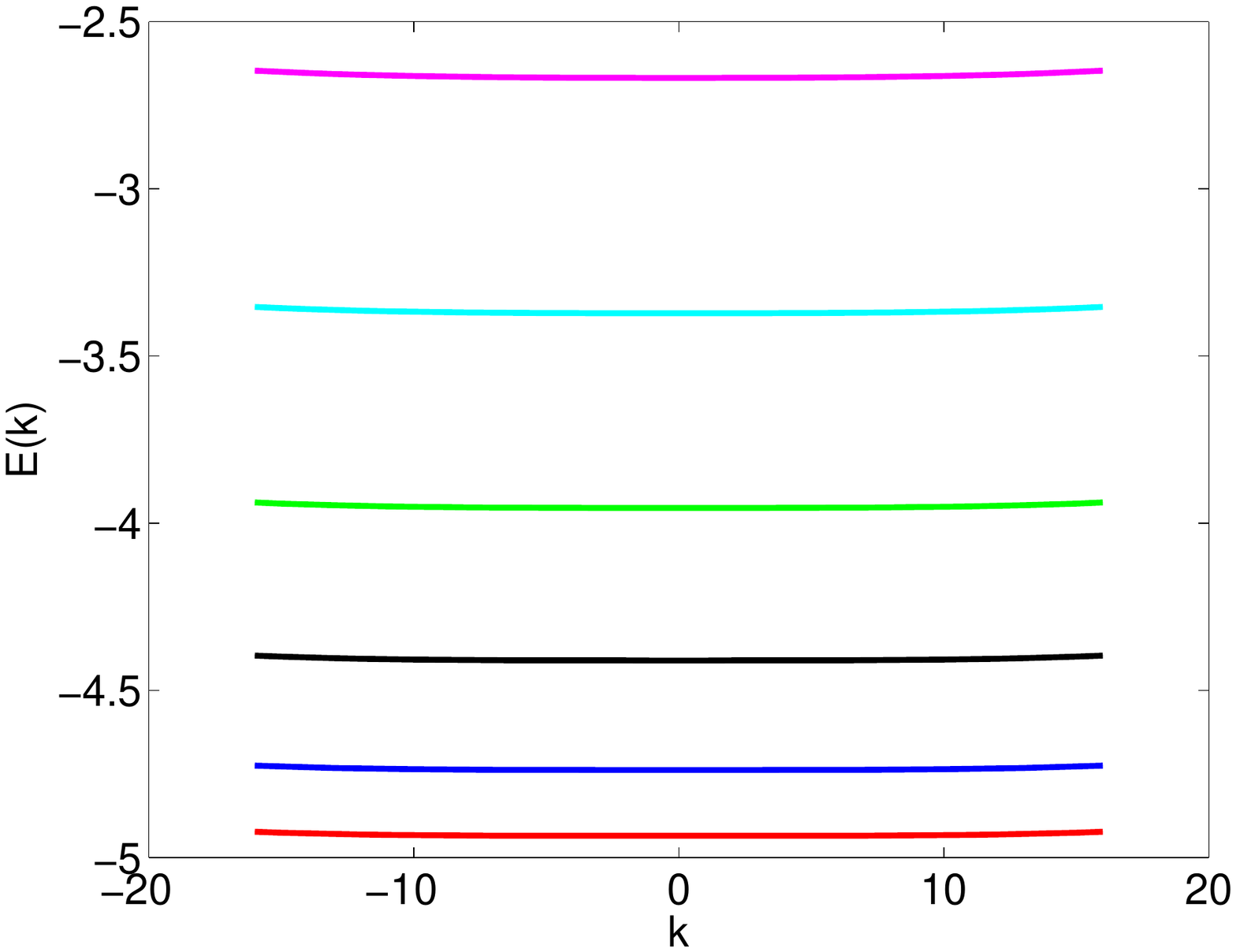}}%
\subfigure[Subdomain]{
\label {fig:insulatormetalenergylevelsub}
\includegraphics[width=3.0in]{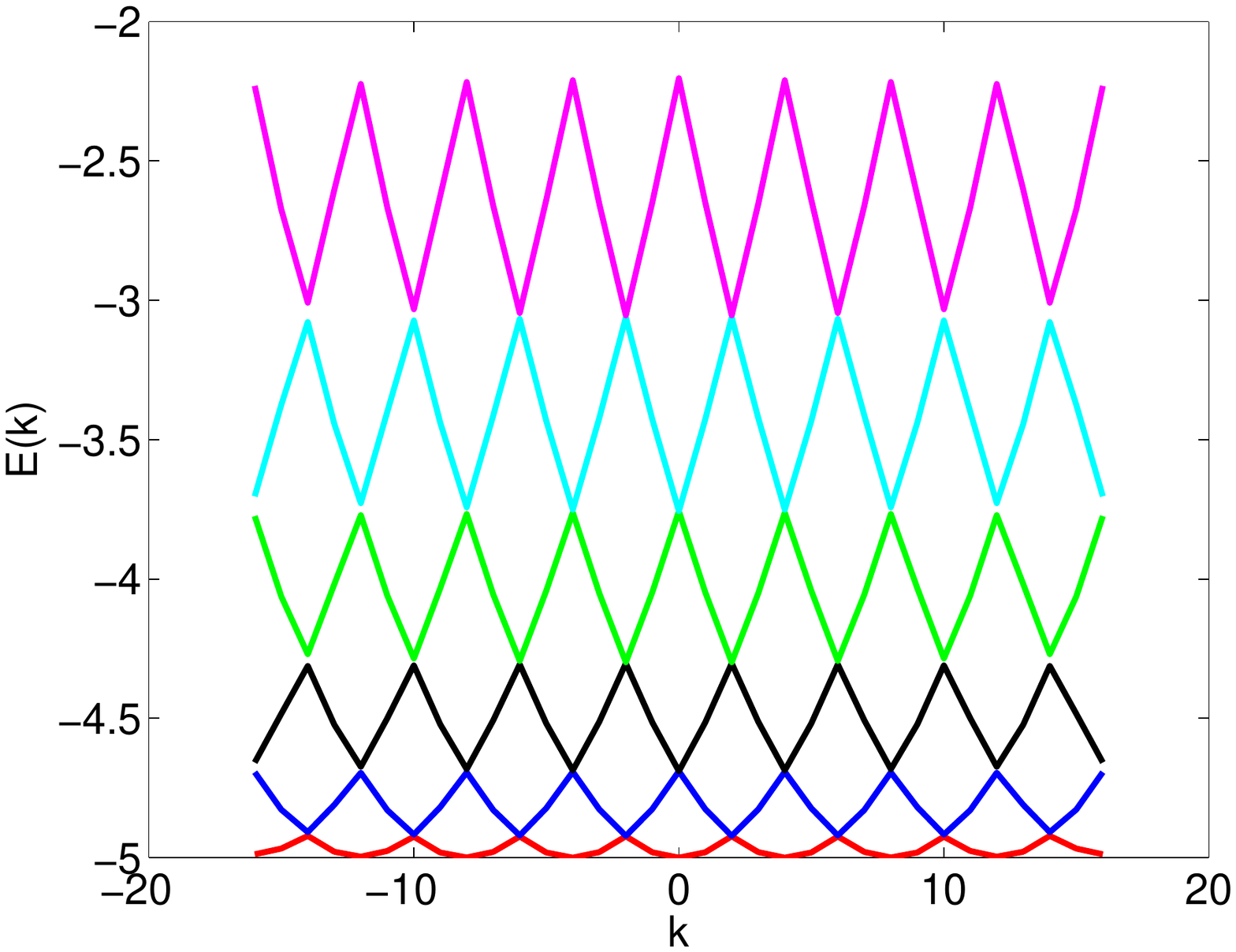}}%
\vspace{-4em}
\caption{\small Band structures of the global system over $[0,16]$ and the subsystem over $[0,8]$ with $a=5$ and $b=0$ in \exref{ex:insulatormetal}. \eqref{assumpa} - \eqref{assumpb} are valid for the global system while
\eqref{assumpc} is invalid for the subsystem. (a) The global system;
(b) The subsystem.}\label {fig:insulatormetalenergylevel}
\end{figure}

First, we fix $\Lam=[0.1,7.9]$ and choose $\Lam_b=[0,8]$. Since
$a=-5$, the subsystem is essentially an eigenvalue problem of the
Laplacian operator, which means \eqref{assumpc} is not valid; see
Figure~\ref{fig:insulatormetalenergylevelsub}. Figure
\ref{fig:insulatormetaldensitydifference1} shows that only algebraic
decay rate is observed, since \eqref{assumpa}--\eqref{assumpb} are
valid while \eqref{assumpc} is invalid. Second, for the fixed
$\Lam=[3,5]$, we choose a series of enlarged buffer regions $\Lam_b =
[3-x,5+x]$ ($x\ge0.1$) by varying $x$. $\abs{\rho(5)-\rho_{\Lam}(5)}$
as a function of $x$ is shown in
Figure~\ref{fig:insulatormetaldensitydifference2}.  Algebraic decay
rate is also observed since \eqref{assumpc} is invalid in this
case. Finally, we fix $\Lam=[0,8]$ and choose a series of enlarged
buffer regions $\Lam_b = [0-x,8+x]$ ($x\ge0.1$) by varying
$x$. $\abs{\rho(8)-\rho_{\Lam}(8)}$ as a function of $x$ is shown in
Figure \ref{fig:insulatormetaldensitydifference3}.  Exponential decay
rate is observed since \eqref{assumpc} becomes valid in this case.

\begin{figure}[htbp]
\vspace{-4em}
\centering
\subfigcapskip -0.5in
\subfigure[Fixed buffer region]{%
\label {fig:insulatormetaldensitydifference1}
\includegraphics[width=2.0in]{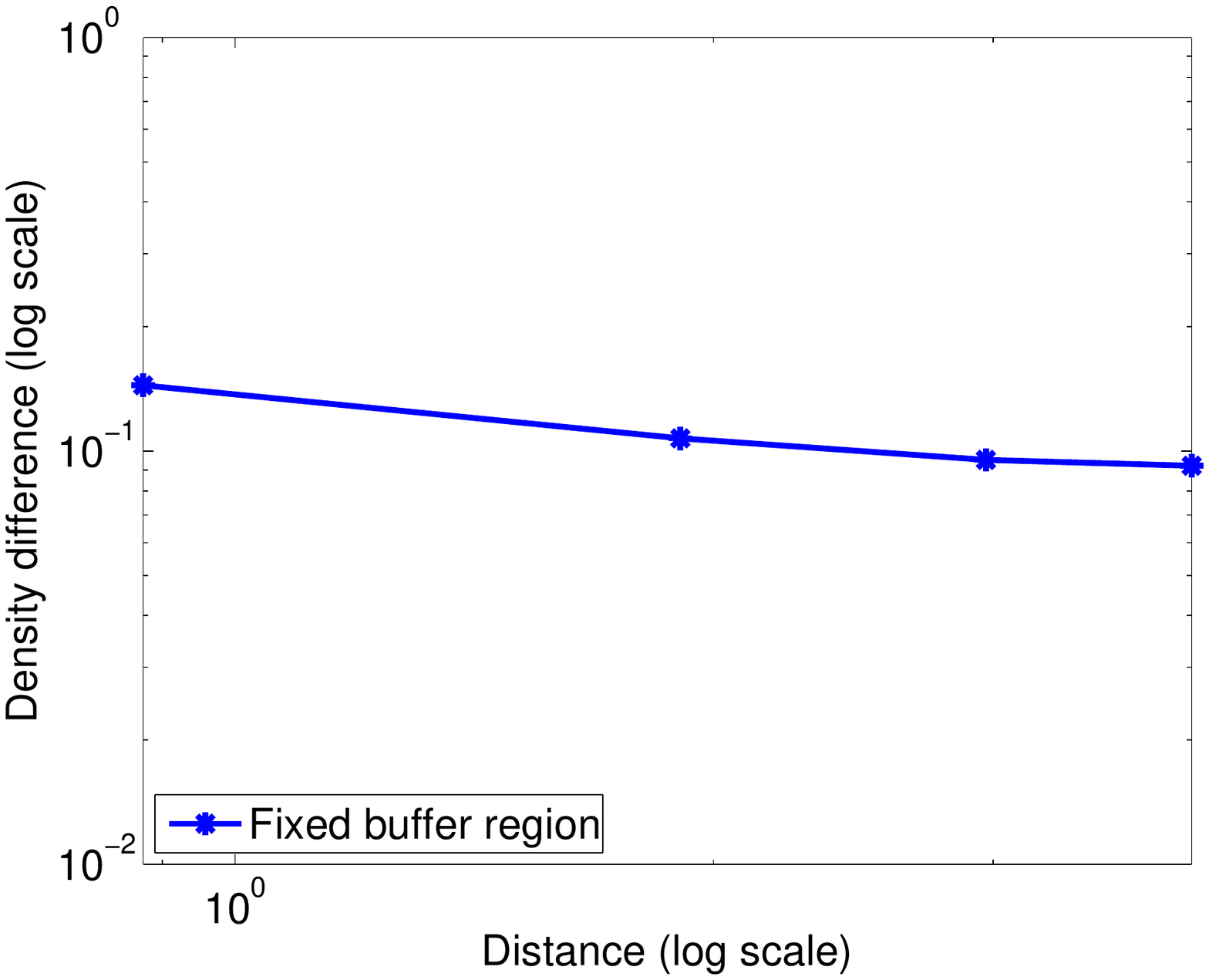}}%
\subfigure[Enlarged buffer region]{
\label {fig:insulatormetaldensitydifference2}
\includegraphics[width=2.0in]{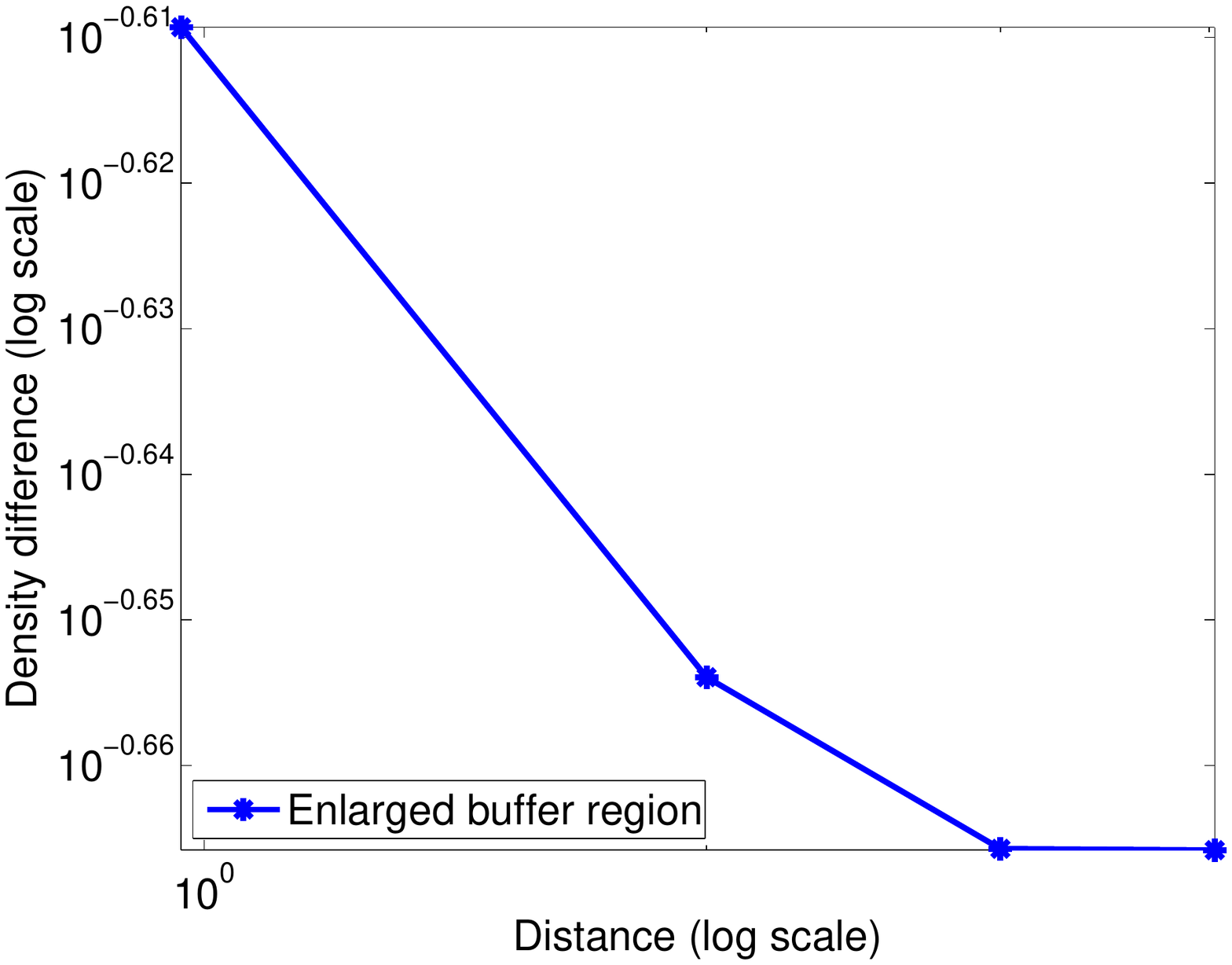}}%
\subfigure[Enlarged buffer region]{
\label {fig:insulatormetaldensitydifference3}
\includegraphics[width=2.0in]{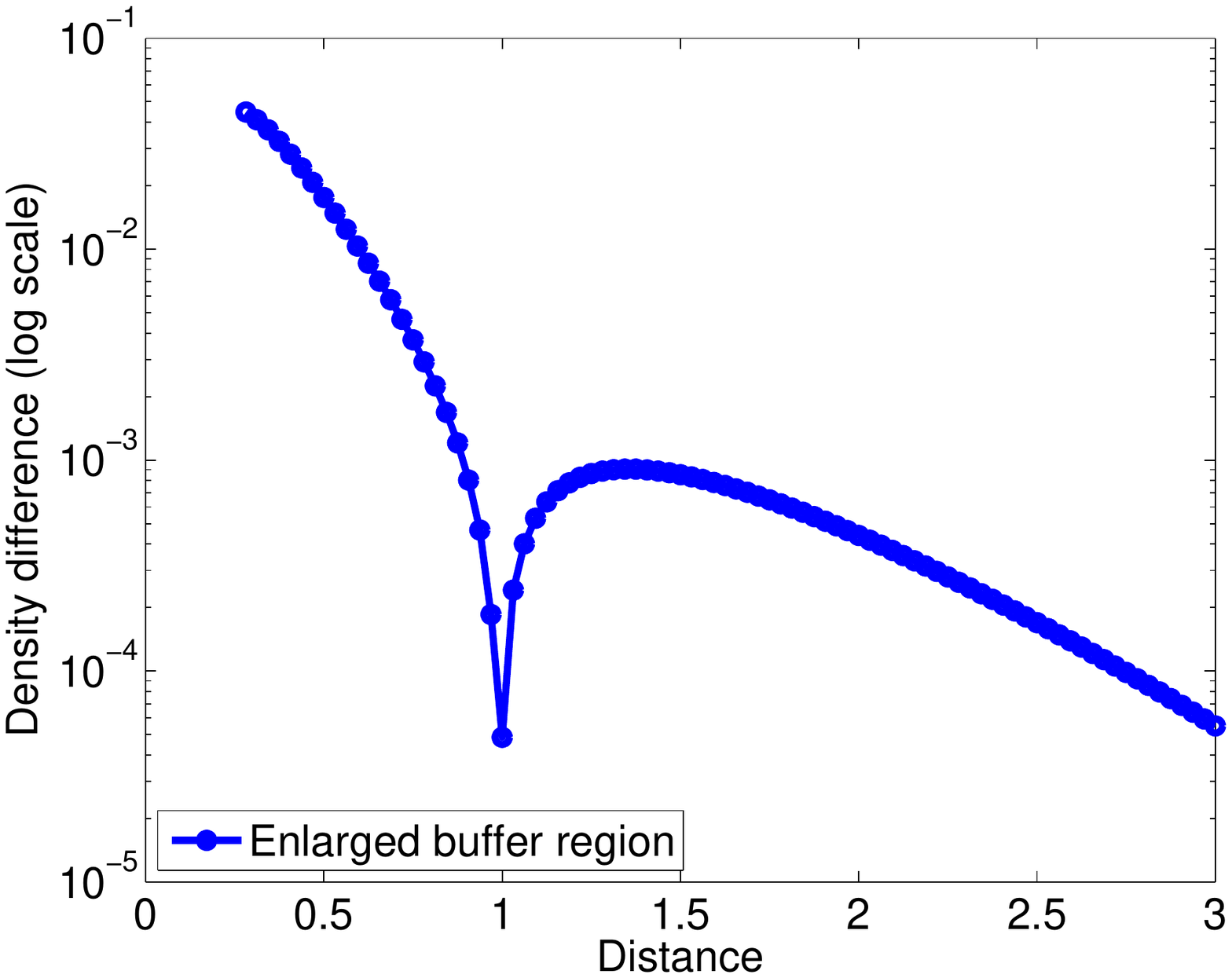}}%
\vspace{-2em}
\caption{\small Difference between electron densities of the global system and subsystems
as a function of distance in \exref{ex:insulatormetal}. (a) $\abs{\rho(x)-\rho_{\Lam}(x)}$
with $\Lam=[0.1,7.9]$ and $\Lam_{b}=[0,8]$ as a function of $x$;
(b) $\abs{\rho(5)-\rho_{\Lam}(5)}$
with $\Lam=[3,5]$ and $\Lam_{b}=[3-x,5+x]$ ($x\ge0.1$) as a function of $x$; (c) $\abs{\rho(8)-\rho_{\Lam}(8)}$
with $\Lam=[0,8]$ and $\Lam_{b}=[0-x,8+x]$ ($x\ge0.1$) as a function of $x$. Exponential decay rate is observed in (c), while only algebraic decay rates are observed in (a) and (b), due to the validity and invalidity of \eqref{assumpc}
in corresponding cases.
}\label {fig:insulatormetaldensitydifference}
\end{figure}
\end{example}

\begin{example}
\label{ex:2d}
Consider an infinite array of atoms on a two-dimensional lattice with $X_i=i, Y_j=j$, for $i\in\mathbb{Z}, j\in\mathbb{Z}$. Each atom has one valence electron and spin degeneracy is ignored. $V$ is of the following form
\[
V(x,y)= -\sum_{i\in\mathbb{Z},j\in\mathbb{Z}}\frac{a}{\sqrt{2\pi\sigma^2}}\exp{[-(x-X_i)^2/2\sigma^2-(y-Y_j)^2/2\sigma^2]}.
\]

Figure \ref{fig:bandstructure2d} shows band structures when $a=10$, $\sigma=0.15$, and $a=10$, $\sigma=0.45$.
It is clear that for selected parameters, the corresponding system is an insulator in Figure \ref{fig:bandstructure2dinsulator}, while it is a metal in Figure \ref{fig:bandstructure2dmetal}.
\begin{figure}[htbp]
\vspace{-6em}
\centering
\subfigcapskip -0.9in
\subfigure[Insulator]{%
\label {fig:bandstructure2dinsulator}
\includegraphics[width=3.0in]{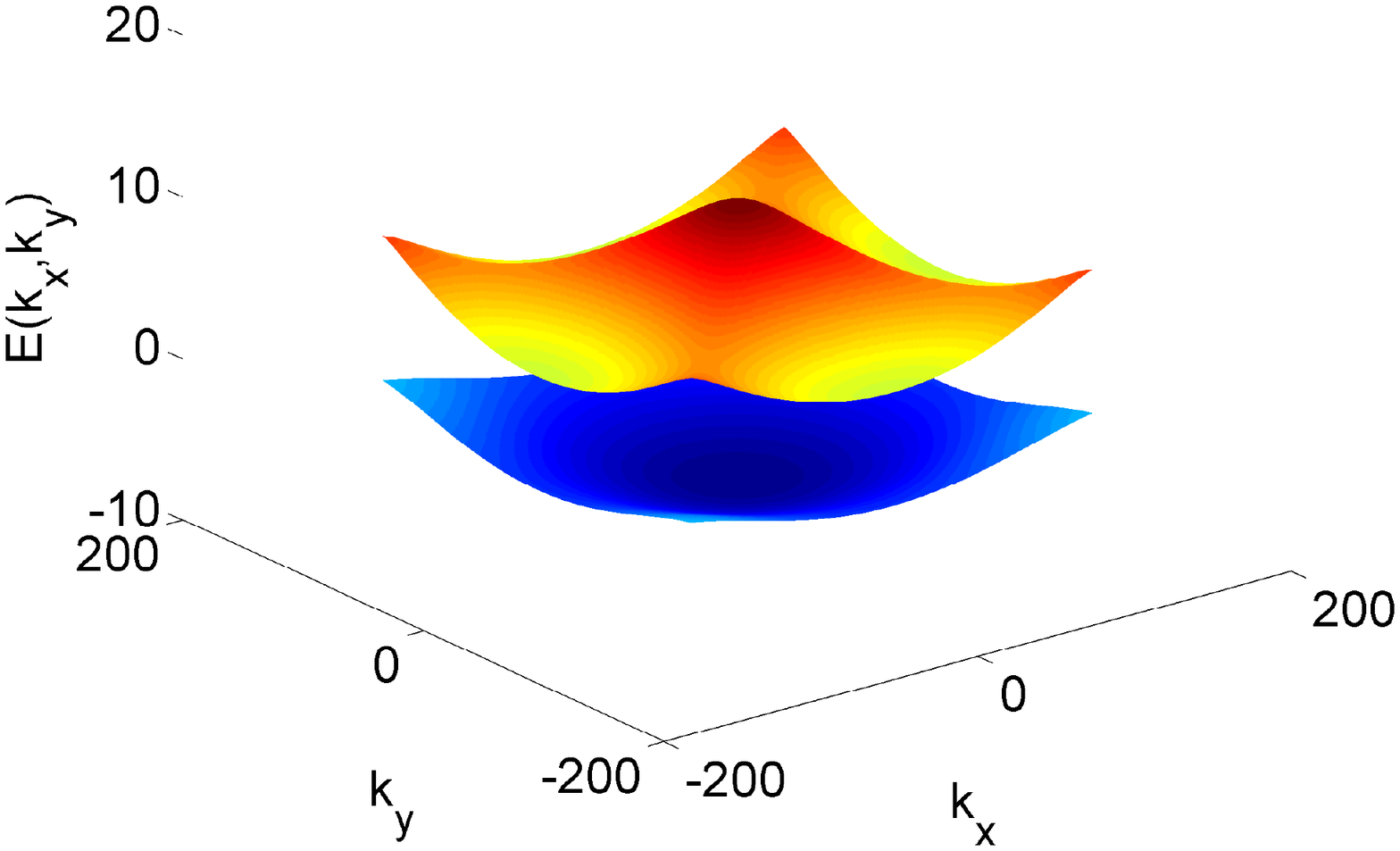}}%
\subfigure[Metal]{
\label {fig:bandstructure2dmetal}
\includegraphics[width=3.0in]{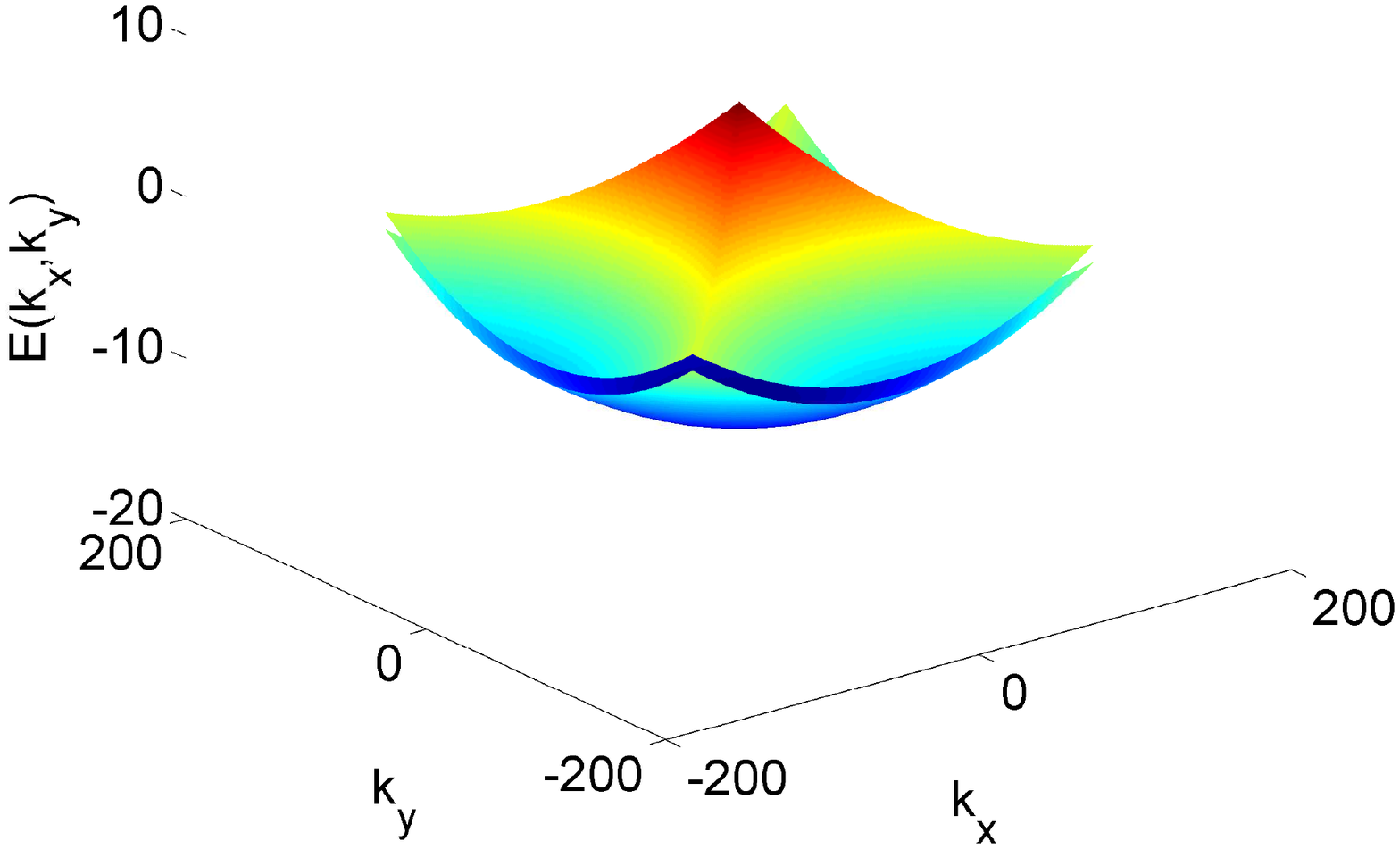}}%
\vspace{-4em}
\caption{\small  Band structures for different parameters in \exref{ex:2d}. (a) Insulator, where $a=10$ and $\sigma=0.15$; (b) Metal, where $a=10$ and $\sigma=0.45$.}\label {fig:bandstructure2d}
\end{figure}

First, we fix $\Lam = [0.1,5.9]\times[0.1,5.9]$ and choose $\Lam_b = [0,
6]\times[0, 6]$. All three boundary conditions are
tested. $\abs{\rho(x)-\rho_{\Lam}(x)}$ is plotted in the centered row
of Figure \ref{fig:dendiff2d} for DBC and PBC. Second, for the fixed
$\Lam = [2,4]\times[2,4]$, we consider a series of enlarged buffer
regions $\Lam_b = [2-x,4+x]\times[2-x,4+x]$
($x\ge0.1$). $\abs{\rho(2,2)-\rho_{\Lam}(2,2)}$ is plotted in the
bottom row of Figure \ref{fig:dendiff2d}. The left column of Figure
\ref{fig:dendiff2d} is for the insulator case, while the right column
is for the metal case. Results here are consistent with theoretical
estimates.
\begin{figure}[htbp]
\vspace{-6em}
\centering
\subfigcapskip -0.8in
\subfigure[Energy level with DBC]{%
\label {fig:insulatorenergyleveldbc2d}
\includegraphics[width=2.5in]{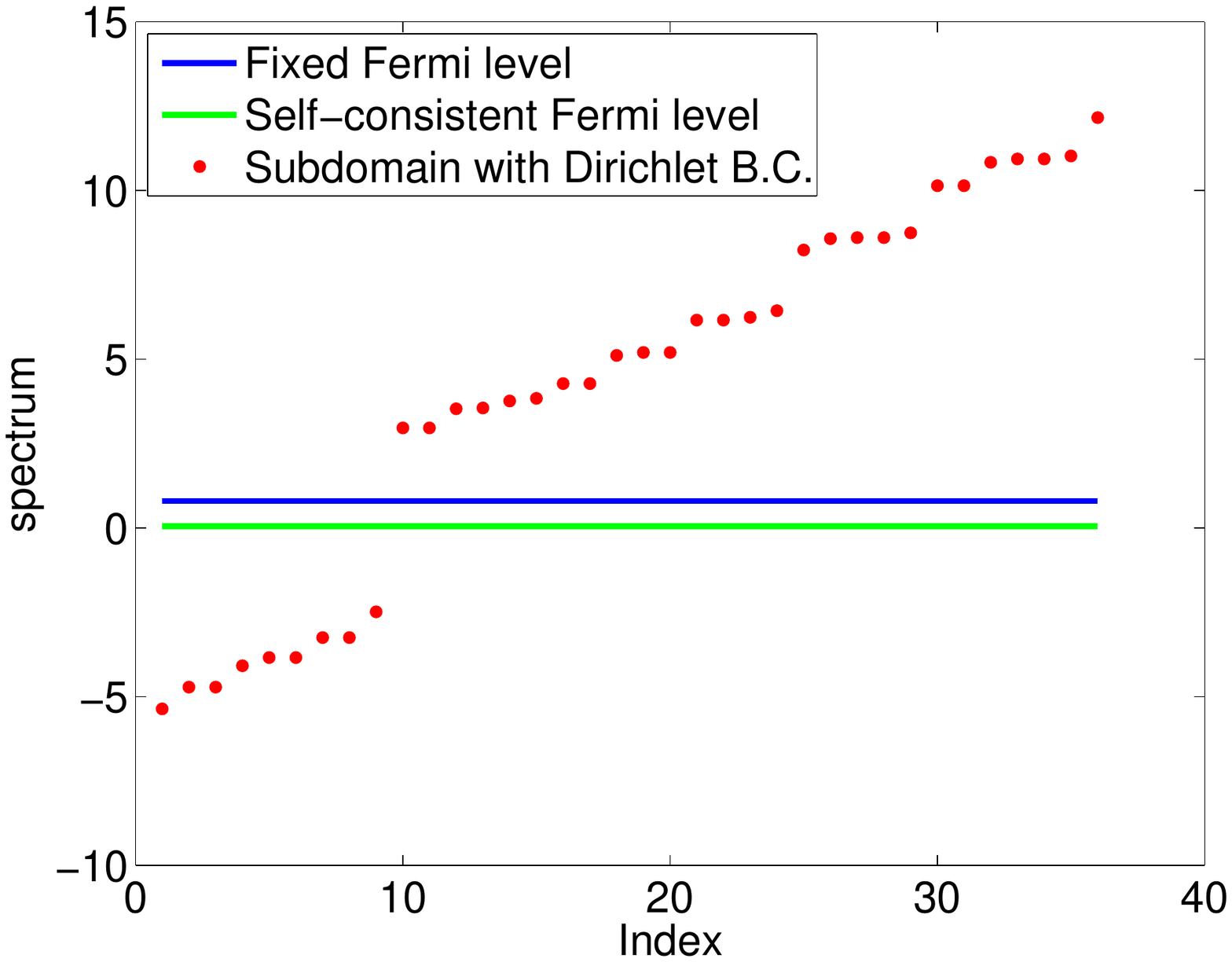}}%
\subfigure[Energy level with PBC]{%
\label {fig:metalenergylevelpbc2d}
\includegraphics[width=2.5in]{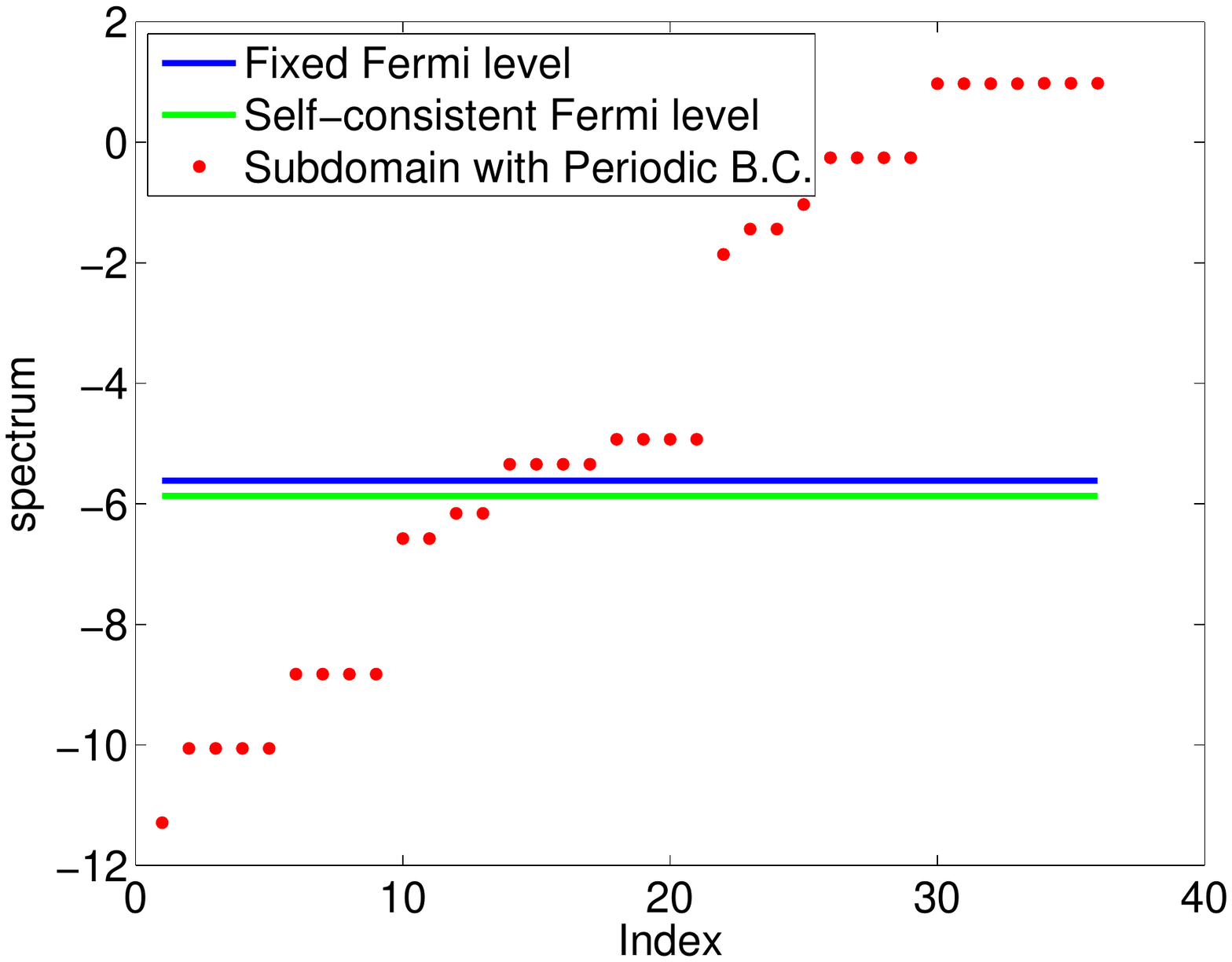}}%
\\
\vspace{-8em}
\subfigure[Density difference with DBC]{
\label {fig:insulatordensitydiffdbc2d}
\includegraphics[width=2.5in]{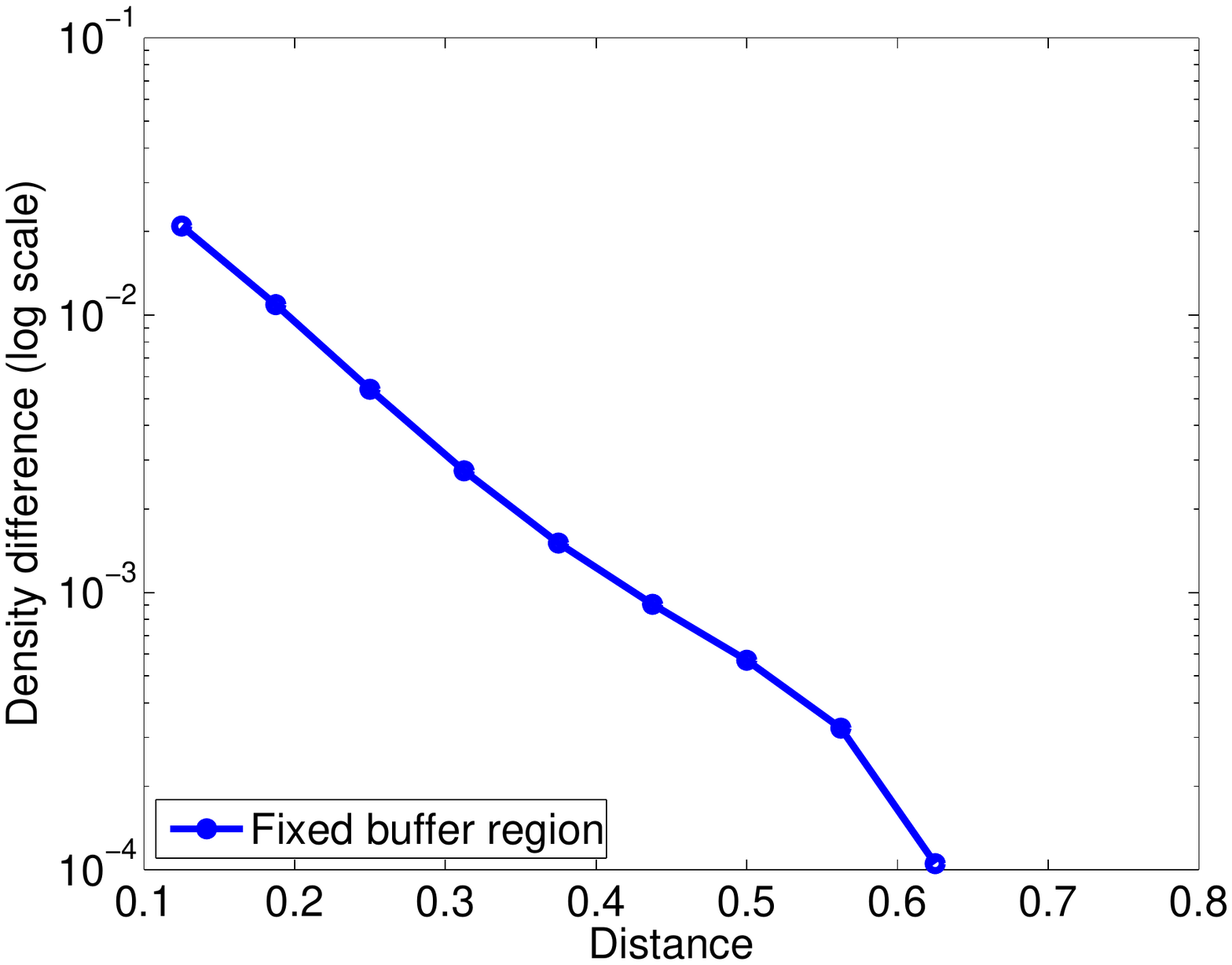}}%
\subfigure[Density difference with PBC]{
\label {fig:metaldensitydiffpbc2d}
\includegraphics[width=2.5in]{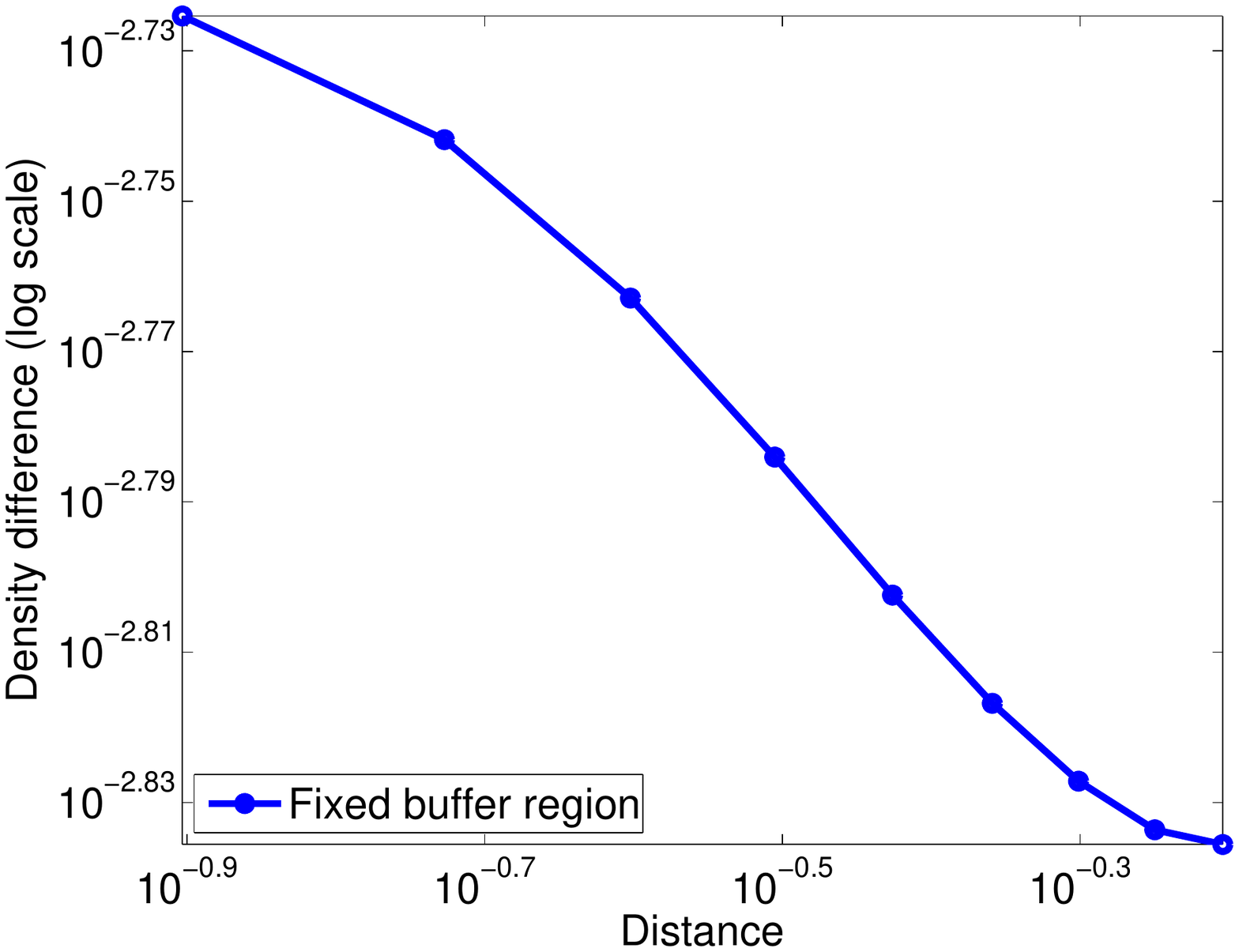}}%
\\
\vspace{-8em}
\subfigure[Density difference with DBC]{
\label {fig:insulatordensitydiffdbcbuffer2d}
\includegraphics[width=2.5in]{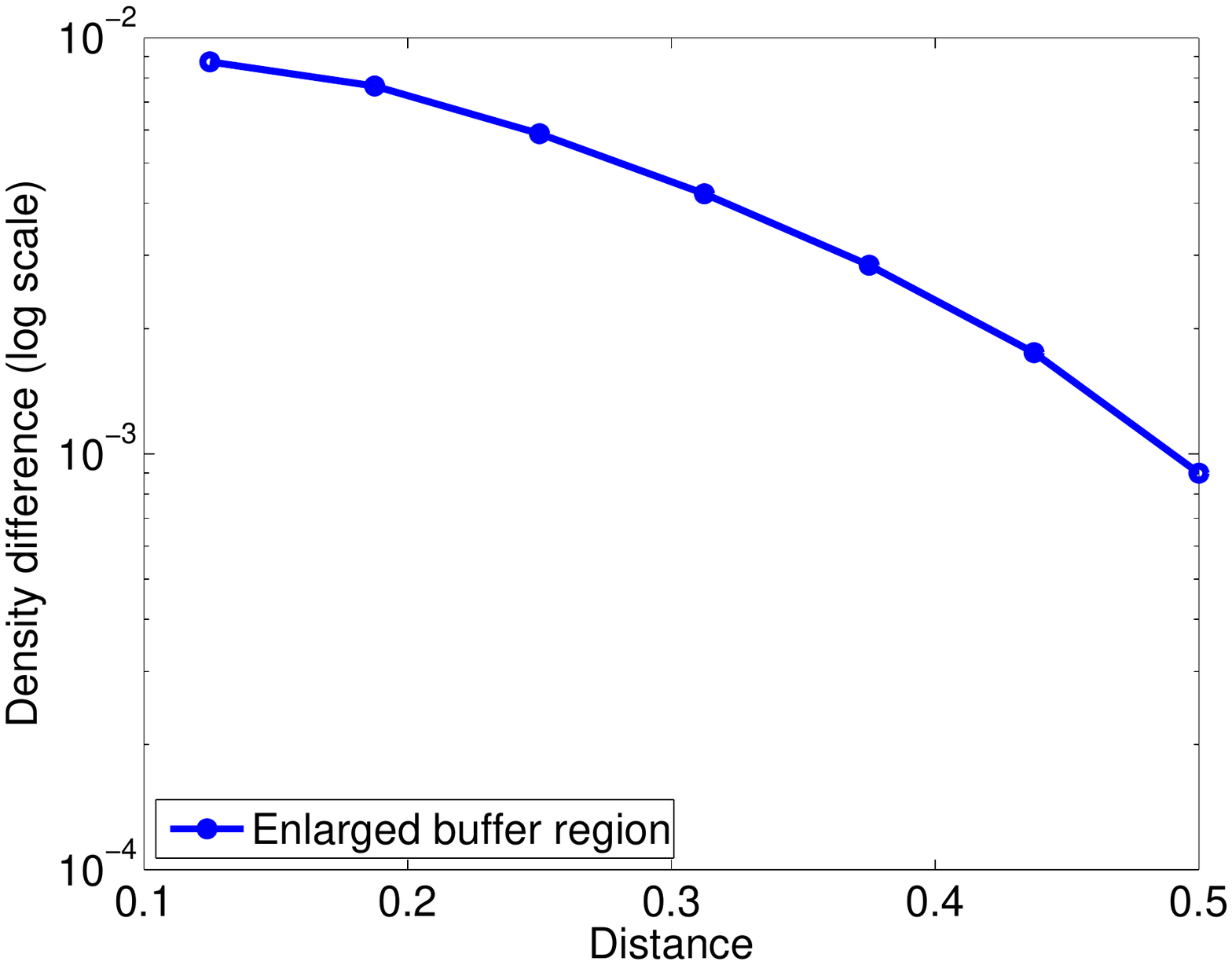}}%
\subfigure[Density difference with PBC]{
\label {fig:metaldensitydiffpbcbuffer2d}
\includegraphics[width=2.5in]{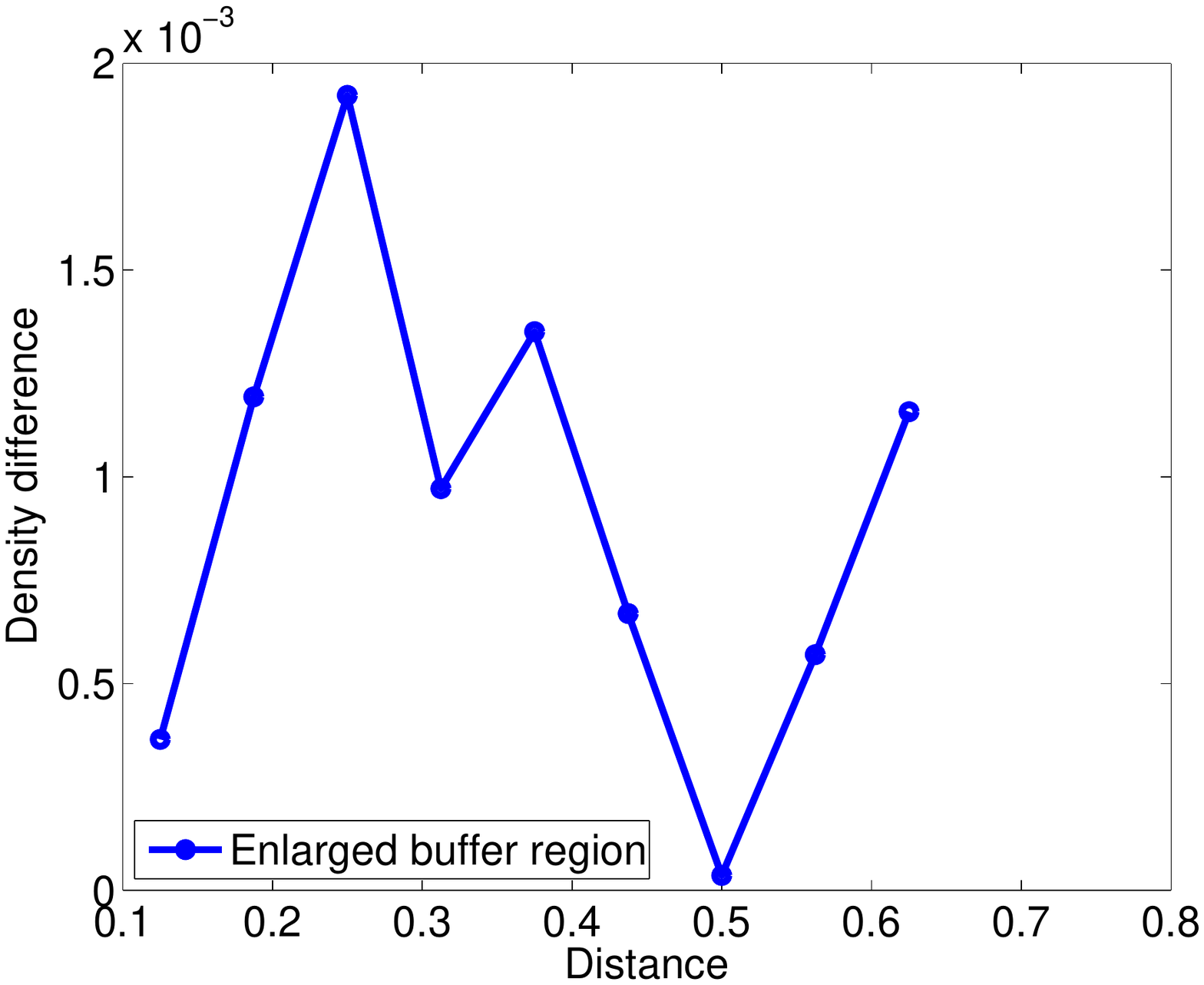}}%
\vspace{-4em}
\caption{\small  Energy levels of the subsystem (top row), $\abs{\rho(x)-\rho_{\Lam}(x)}$ with $\Lam=[0.1,5.9]\times[0.1,5.9]$ and $\Lam_b=[0,6]\times[0,6]$ as a function of $x$ (centered row),
and $\abs{\rho(2,2)-\rho_{\Lam}(2,2)}$ with $\Lam=[2,4]\times[2,4]$ and $\Lam_b = [2-x,4+x]\times[2-x,4+x]$ ($x\ge0.1$) (bottom row) in \exref{ex:2d}.
Left column: Insulator; Right column: Metal. (a) Energy level with DBC; (b) Energy level with PBC; (c) Density difference with DBC; (b) Density difference with PBC; (e) Density difference with DBC; (f) Density difference with PBC. Decay rates are consistent
with theoretical estimates.}\label {fig:dendiff2d}
\end{figure}
\end{example}

\begin{example}[Insulating global system, gap assumption invalid for
  the subsystem]
\label{ex:insulatormetal2d}
Consider
\[
V(x,y)=
\begin{cases}
-a, & (x,y)\in \{ 3< x< 9, 3< y< 9\}; \\
-b, & (x,y)\in \{ [0, 12]\times[0,12]\}/\{3 < x < 9, 3< y< 9\}; \\
\text{periodic extension}, & \text{otherwise}.
\end{cases}
\]
Choose $a=5$ and $b=0$ and plot the band structure of this problem in
Figure \ref{fig:insulatormetalenergylevelentire2d}. It is clear that
\eqref{assumpa}--\eqref{assumpb} are valid.
\begin{figure}[htbp]
\vspace{-6em}
\centering
\subfigcapskip -1.0in
\subfigure[Global domain]{%
\label {fig:insulatormetalenergylevelentire2d}
\includegraphics[width=3.0in]{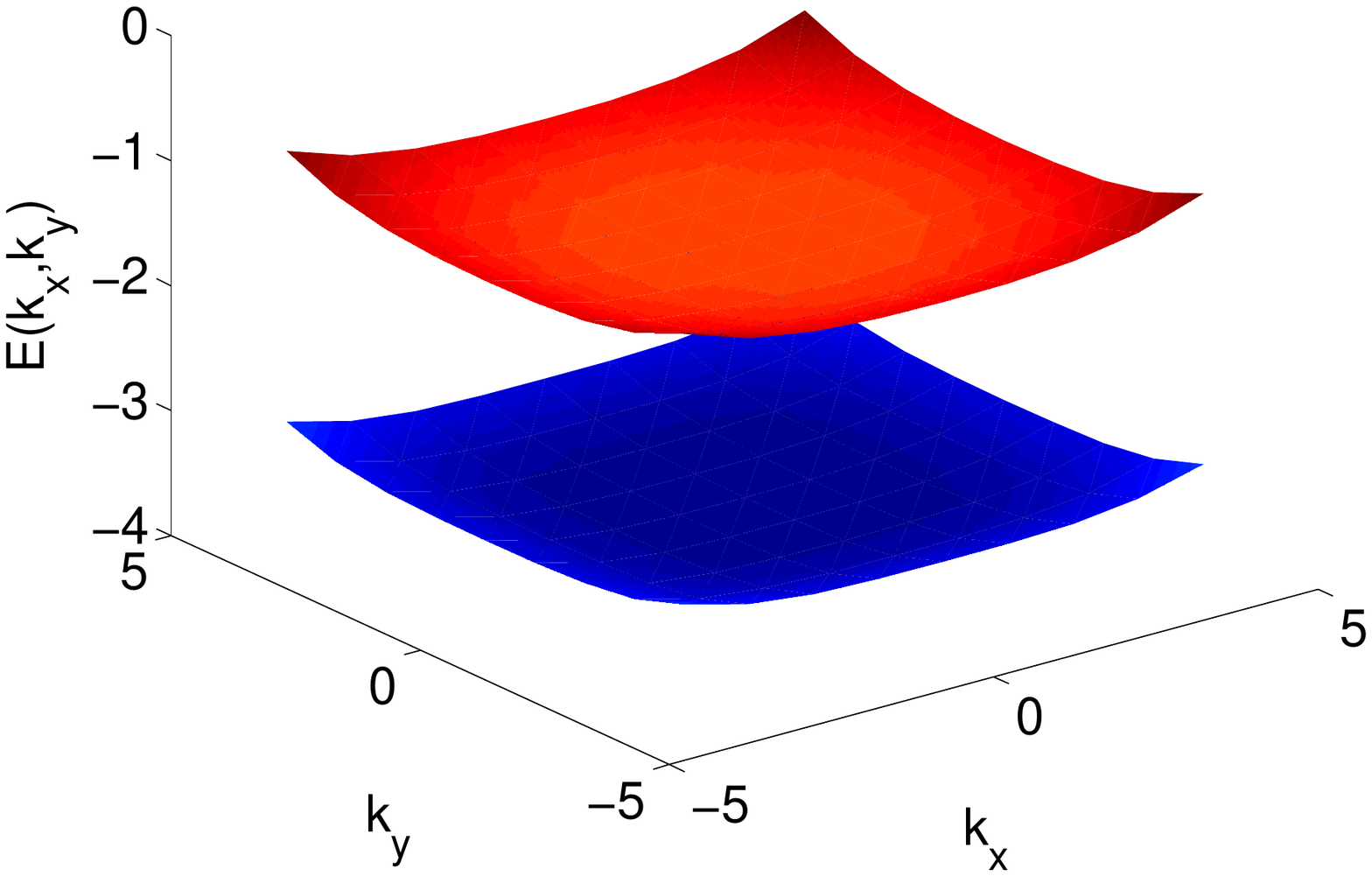}}%
\subfigure[Subdomain]{
\label {fig:insulatormetalenergylevelsub2d}
\includegraphics[width=3.0in]{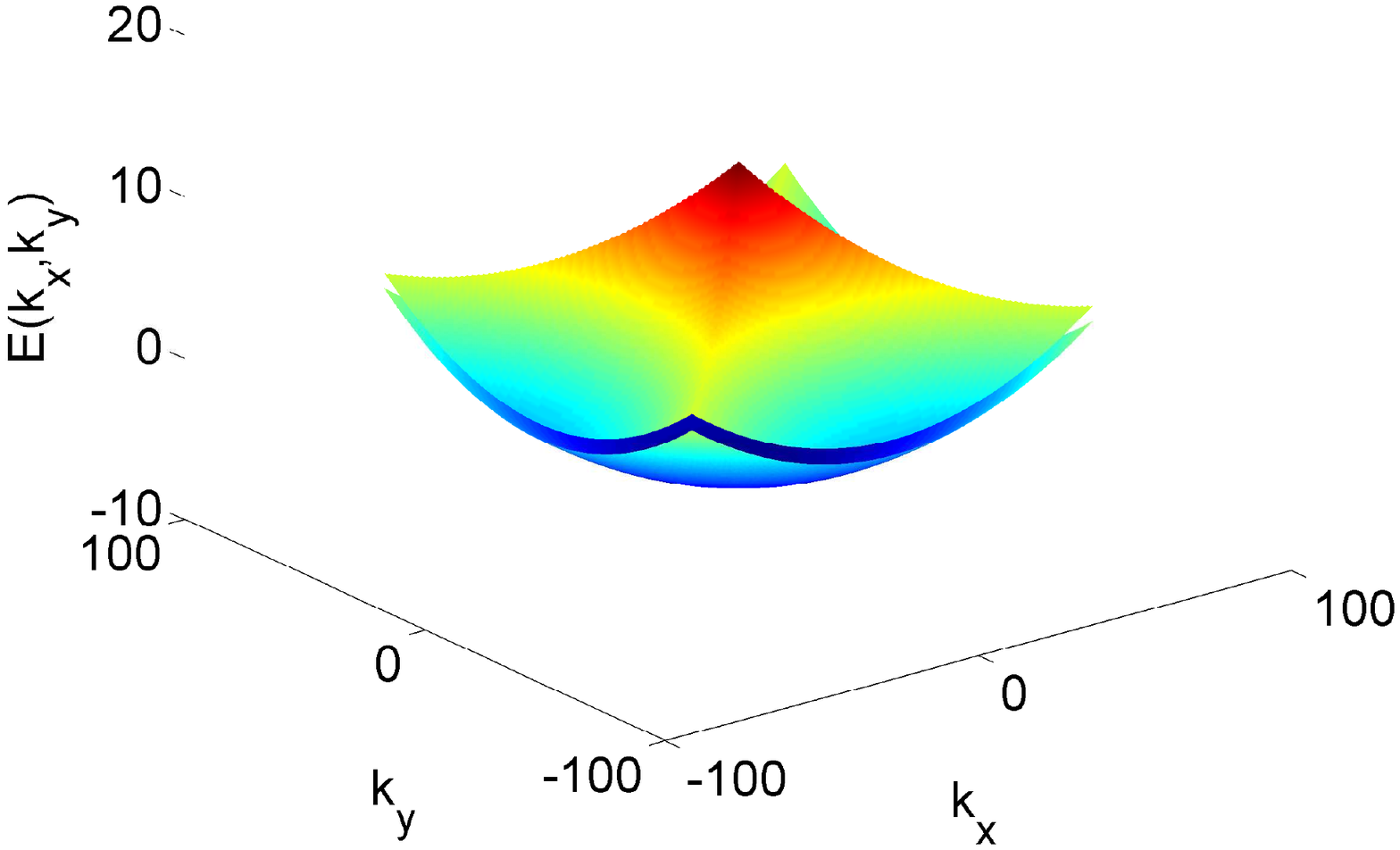}}%
\vspace{-4em}
\caption{\small Band structures of the global system over $[0,12]\times[0,12]$ and the subsystem over $[3,9]\times[3,9]$ when $a=5$ and $b=0$ in \exref{ex:insulatormetal2d}.
(a) The global system; (b) The subsystem.}\label {fig:insulatormetalenergylevel2d}
\end{figure}

Now we fix $\Lam=[3.1,8.9]\times[3.1,8.9]$ and choose $\Lam_b=[3,9]\times[3,9]$. Since $a=-5$, the subsystem is essentially the eigenvalue problem of the Laplacian operator, which implies the invalidity of \eqref{assumpc}.
As a consequence, only algebraic decay rate is
observed in Figure \ref{fig:insulatormetaldensitydifference2d1}. Furthermore, we fix $\Lam=[5, 7]\times[5,7]$
and choose a series of enlarged buffer regions $\Lam_b=[5-x,7+x]\times[5-x,7+x]$ ($x\ge0.1$) by varying $x$.
$\abs{\rho(5,5)-\rho_{\Lam}(5,5)}$ as a function of $x$ is shown in Figure \ref{fig:insulatormetaldensitydifference2d2}. Algebraic decay rate is also observed again
due to the invalidity of \eqref{assumpc}. Finally, we fix $\Lam=[3,9]\times[3,9]$ and choose a series of enlarged buffer regions $\Lam_b=[3-x,9+x]\times[3-x,9+x]$ ($x\ge0.1$). $\abs{\rho(3,3)-\rho_{\Lam}(3,3)}$
as a function of $x$ is shown in Figure \ref{fig:insulatormetaldensitydifference2d3}.
Exponential decay rate is observed since \eqref{assumpc} becomes valid in this case.

\begin{figure}[htbp]
\vspace{-4em}
\centering
\subfigcapskip -0.5in
\subfigure[Fixed buffer region]{%
\label {fig:insulatormetaldensitydifference2d1}
\includegraphics[width=2.0in]{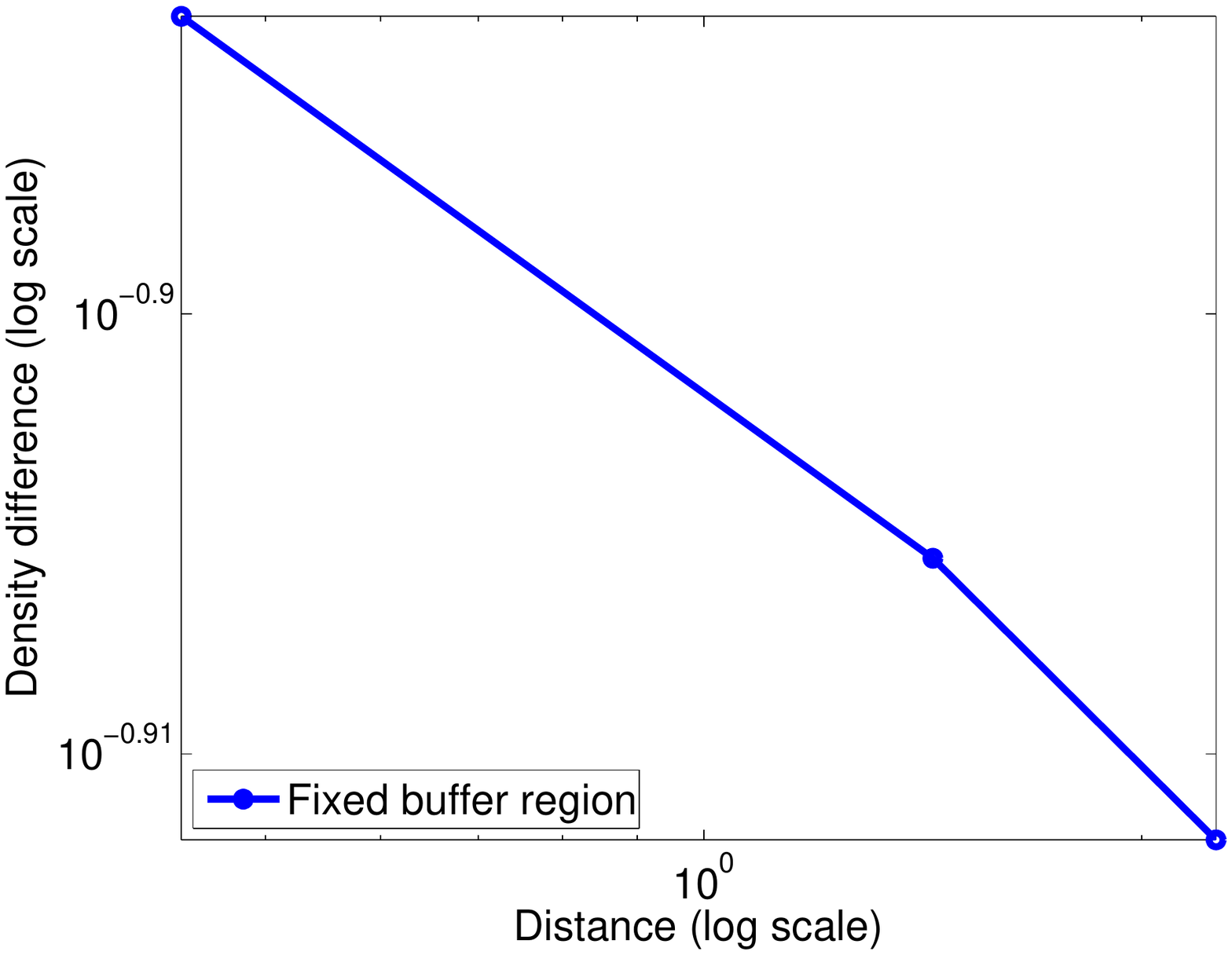}}%
\subfigure[Enlarged buffer region]{
\label {fig:insulatormetaldensitydifference2d2}
\includegraphics[width=2.0in]{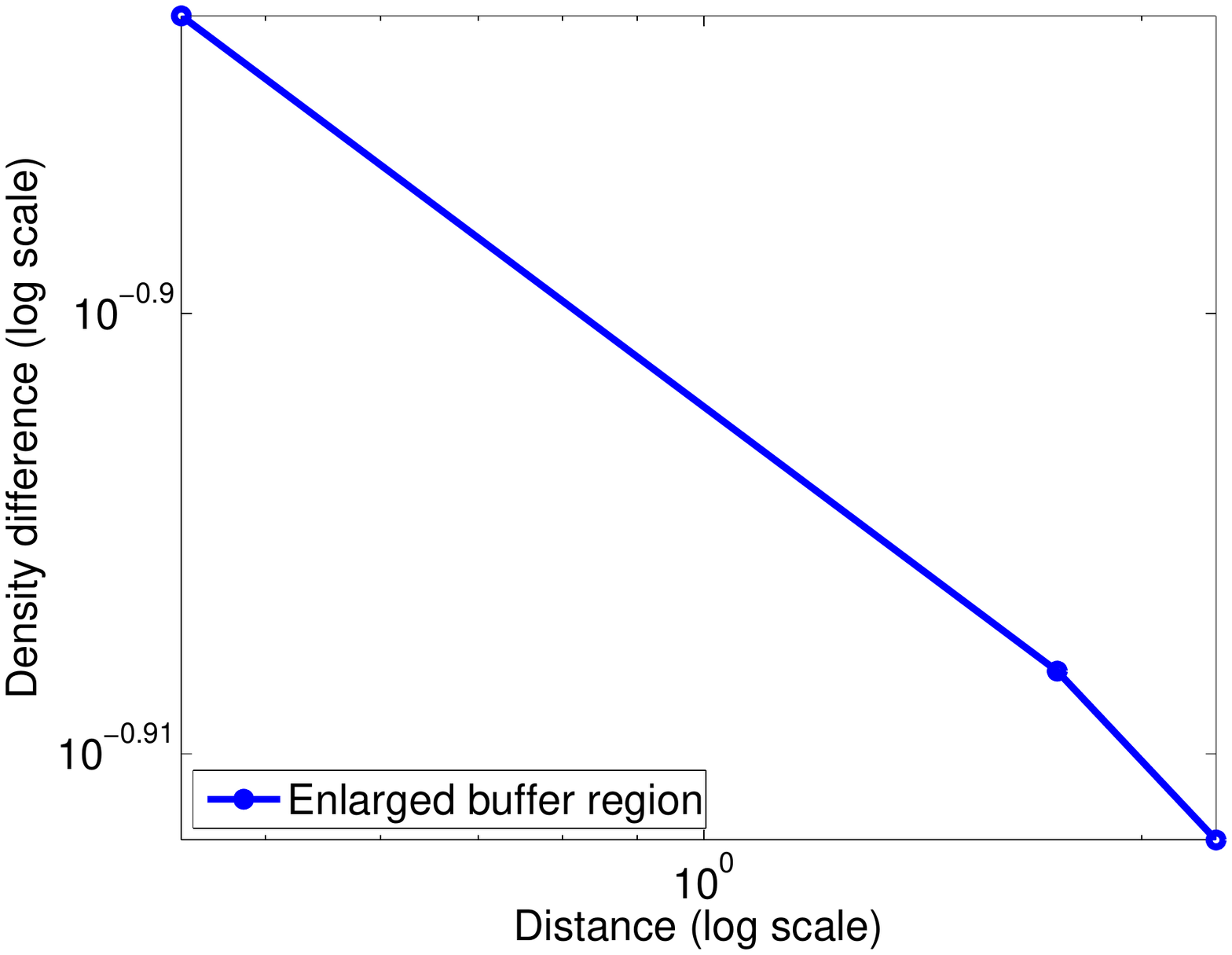}}%
\subfigure[Enlarged buffer region]{
\label {fig:insulatormetaldensitydifference2d3}
\includegraphics[width=2.0in]{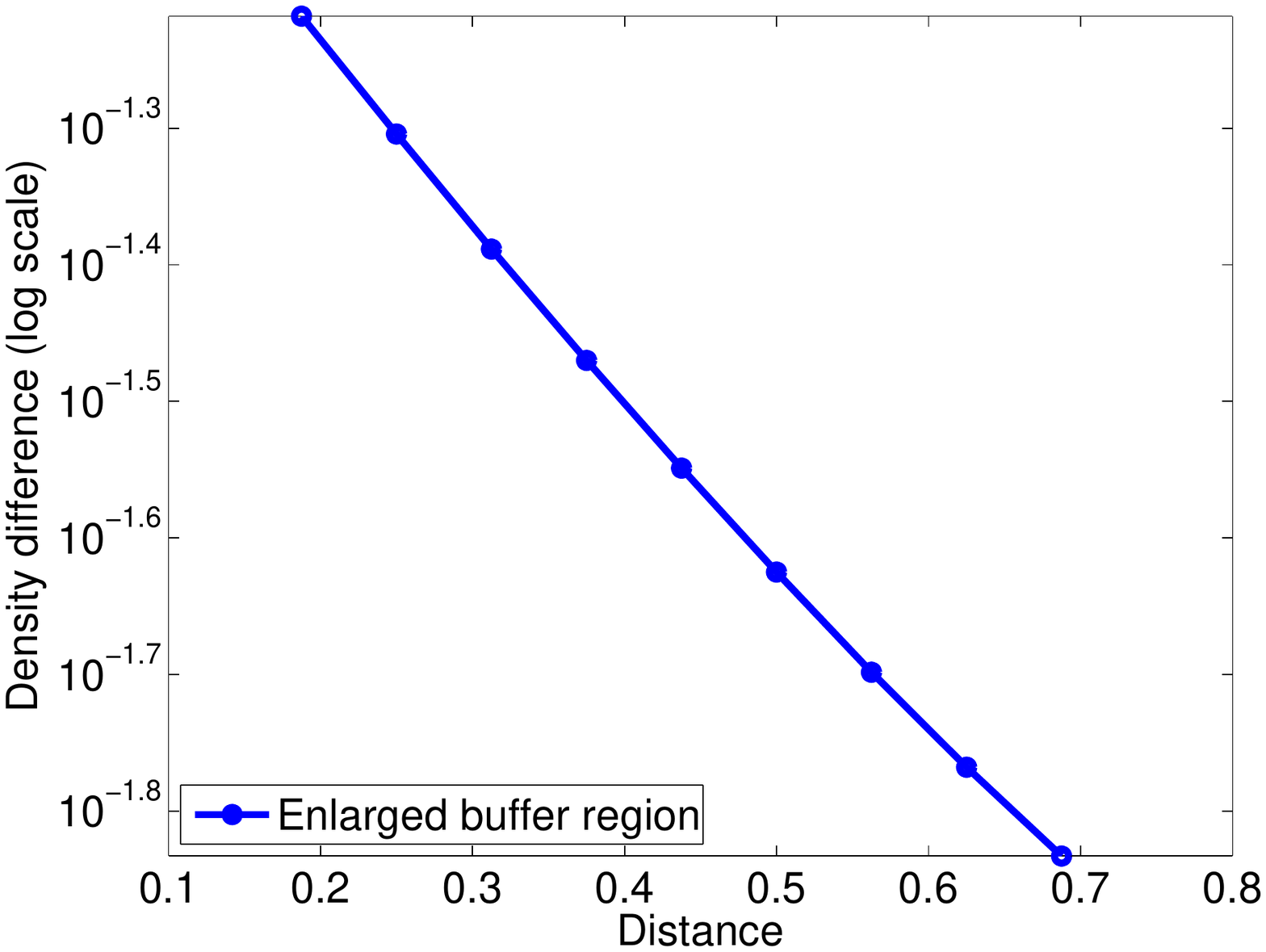}}%
\vspace{-2em}
\caption{\small Difference between electron densities of the global system and subsystems
as a function of distance in \exref{ex:insulatormetal2d}. (a) $\abs{\rho(x)-\rho_{\Lam}(x)}$ with $\Lam=[3.1,8.9]\times[3.1,8.9]$ and $\Lam=[3, 9]\times[3,9]$ as a function of $x$; (b) $\abs{\rho(5,5)-\rho_{\Lam}(5,5)}$ with $\Lam=[5,7]\times[5,7]$ and $\Lam_b=[5-x,7+x]\times[5-x,7+x]$ ($x\ge0.1$)
as a function of $x$; (c) $\abs{\rho(3,3)-\rho_{\Lam}(3,3)}$ with $\Lam=[3,9]\times[3,9]$ and $\Lam_b=[3-x,9+x]\times[3-x,9+x]$ ($x\ge0.1$) as a function of $x$. Exponential decay rate is observed in (c), while only algebraic decay rates are observed in (a) and (b) due to the validity and invalidity of \eqref{assumpc} in corresponding cases.}\label {fig:insulatormetaldensitydifference2d}
\end{figure}
\end{example}

\section{Conclusion}

In this work, we identify the crucial gap assumption for both the
global system and the subsystem for the accuracy of the
DAC method for electronic structure calculations.
Under the gap assumption, we prove that the pointwise difference
between electron densities of the global system and the subsystem
decays exponentially as a function of the distance away from the
boundary of the subsystem. This analytic conclusion is verified by
numerical examples.

From a physical point of view, our result suggests that while the
DAC method works quite well for insulating systems, one
still needs to be careful in the choice of subdomain and restrictions
to guarantee the gap assumption. Moreover, for heterogeneous systems
with large local Fermi energy variations, such as
metal-insulator-metal bilayer devices or systems  involving long range
charge transfer, application of the DAC method might
need extra care.

Finally, let us emphasize that our accuracy estimate only depends on
the size of the gap and the $L^{\infty}$ norm of the effective
potential. Hence, even though the discussion here focuses on the
DAC method, the analysis allows for general restriction
of Hamiltonian, and hence can be applied to a variety of methods in
electronic structure calculations using the domain decomposition idea.

\medskip
\noindent \textbf{Acknowledgment.}  We thank Professor Weinan E for
suggesting the problem and for his encouragement. J.L. would also like
to thank Professor Weitao Yang for helpful discussions. The work of
J.C. was supported in part by the National Science Foundation via
grant DMS-1217315. The work of J.L. was supported in part by the
Alfred P.~Sloan Foundation and the National Science Foundation under
grant no.~DMS-1312659.

%----------------------------------------------
%     REFERENCE
%----------------------------------------------

%\bibliography{electronics}

\end{document}